\newtheorem{lemma}{Lemma}
\newtheorem{theorem}{Theorem}
\newtheorem{corollary}{Corollary}
\newtheorem{proposition}{Proposition}
\newtheorem{example}{Example}
\def\top
\begin{document}
\renewcommand{\refname}{References}
\renewcommand{\proofname}{Proof.}
\thispagestyle{empty}

\title[Pronormal Subgroups of Odd Index]{On the pronormality of subgroups of odd index in some direct products of finite groups}
\author{{N.~V.~Maslova, D.~O.~Revin}}%

\top \vspace{1cm}
\maketitle {\small
\begin{quote}
\noindent{\sc Abstract. } A subgroup $H$ of a group $G$ is said to be {\it pronormal} in $G$ if $H$ and $H^g$ are conjugate in
$\langle H, H^g \rangle$ for each $g \in G$. Some problems in Finite Group Theory, Combinatorics, and Permutation Group Theory were solved in terms of pronormality, therefore, the question of pronormality of a given subgroup in a given group is of interest. Subgroups of odd index in finite groups satisfy a native necessary condition of pronormality.
In this paper we continue investigations on pronormality of subgroups of odd index and consider the pronormality question for subgroups of odd index in some direct products of finite groups.

In particular, in this paper we prove that the subgroups of odd index are pronormal in the direct product $G$ of finite simple symplectic groups over fields of odd characteristics if and only if the subgroups of odd index are pronormal in each direct factor of $G$.
Moreover, deciding the pronormality of a given subgroup of odd index in the direct product of simple symplectic groups over fields of odd characteristics is reducible to deciding the pronormality of some subgroup $H$ of odd index in a subgroup of $\prod_{i=1}^t \mathbb{Z}_3\wr Sym_{n_i}$, where each $Sym_{n_i}$ acts naturally on $\{1,\dots, n_i\}$, such that $H$ projects onto $\prod_{i=1}^t Sym_{n_i}$. Thus, in this paper we obtain a criterion of pronormality of a subgroup $H$ of odd index in a subgroup of $\prod_{i=1}^t \mathbb{Z}_{p_i}\wr Sym_{n_i}$, where each $p_i$  is a prime and each $Sym_{n_i}$ acts naturally on $\{1,\dots, n_i\}$, such that $H$ projects onto $\prod_{i=1}^t Sym_{n_i}$.
\medskip

\noindent{\bf Keywords:} finite group, pronormal subgroup, odd index, direct product, simple symplectic group, wreath product.
 \end{quote}
}


\section{Introduction}\label{Intr}

Throughout the paper we consider only finite groups, and henceforth the term group means finite group.
Our further terminology and notation are mostly standard and can be found in \cite{Kleidman_Liebeck}.
However, we will denote by $Sym_n$ and $Alt_n$ the symmetric group and the alternating group of degree $n$, respectively.

A subgroup $H$ of a group $G$ is said to be {\it pronormal} in $G$ (notation $H \,prn\, G$) if $H$ and $H^g$ are conjugate in $\langle H, H^g \rangle$ for each $g \in G$. Some of well-known examples of pronormal subgroups are the following: normal subgroups; maximal subgroups; Sylow subgroups; Sylow subgroups of proper normal subgroups; Hall subgroups of solvable groups; Hall subgroups of proper normal subgroups of solvable groups. The following assertion by Ph.~Hall demonstrates a close connection between properties of permutation representations of finite groups and pronormality of their subgroups.

\begin{proposition}[{\rm \cite[Theorem~6.6]{Hall}}]\label{HallProp} Let $G$ be a group and $H\le G$. Then $H$ is pronormal in $G$ if and only if in any transitive permutation representation of $G$, the subgroup $N_G(H)$ acts transitively on the set $fix(H)$ of fixed points of $H$.
\end{proposition}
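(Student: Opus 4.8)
The plan is to translate the fixed-point condition into a statement about subgroup containments and then play the two relevant conjugacy notions against each other. Two elementary observations underlie everything, and I would record them first. If $G$ acts on a set $\Omega$, then a point $\alpha \in \Omega$ lies in $fix(H)$ exactly when $H \le G_\alpha$, the stabiliser of $\alpha$. And $N_G(H)$ leaves $fix(H)$ invariant: if $n \in N_G(H)$ and $\alpha \in fix(H)$, then $h(n\alpha) = n(n^{-1}hn)\alpha = n\alpha$ for every $h \in H$, since $n^{-1}hn \in H$. So the phrase ``$N_G(H)$ acts transitively on $fix(H)$'' is meaningful, and the task is to show it holds in every transitive permutation representation of $G$ precisely when $H \,prn\, G$. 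Throughout I use the convention $H^x = x^{-1}Hx$.

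($\Leftarrow$) Assume $N_G(H)$ is transitive on $fix(H)$ in every transitive permutation representation of $G$, and fix $g \in G$; put $L = \langle H, H^g \rangle$. I would apply the hypothesis to the action of $G$ by left multiplication on the coset space $G/L$, writing $\omega = L$. The stabiliser $G_\omega$ is $L$, so $\omega \in fix(H)$ because $H \le L$; moreover $H^g \le L$ gives $H \le gLg^{-1} = G_{g\omega}$, so $g\omega \in fix(H)$ as well. By hypothesis there is $n \in N_G(H)$ with $n\omega = g\omega$, that is, $n^{-1}g \in G_\omega = L$. Writing $g = n\ell$ with $\ell \in L$ and using $H^n = H$, we get $H^g = (H^n)^\ell = H^\ell$, so $H$ and $H^g$ are conjugate in $L = \langle H, H^g \rangle$. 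Hence $H \,prn\, G$.

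($\Rightarrow$) Assume $H \,prn\, G$, let $G$ act transitively on a set $\Omega$, and take $\alpha, \beta \in fix(H)$. By transitivity choose $g \in G$ with $g\alpha = \beta$. From $H \le G_\alpha$ we obtain $gHg^{-1} \le gG_\alpha g^{-1} = G_\beta$, and from $\beta \in fix(H)$ we have $H \le G_\beta$; since $gHg^{-1} = H^{g^{-1}}$, both $H$ and $H^{g^{-1}}$ lie in $G_\beta$, so $\langle H, H^{g^{-1}} \rangle \le G_\beta$. Pronormality, applied to the element $g^{-1}$, yields $c \in \langle H, H^{g^{-1}} \rangle \le G_\beta$ with $c^{-1}Hc = H^{g^{-1}} = gHg^{-1}$, whence $(cg)^{-1}H(cg) = H$, i.e. $cg \in N_G(H)$. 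Since $c$ fixes $\beta$, we conclude $(cg)\alpha = c\beta = \beta$, so $N_G(H)$ is transitive on $fix(H)$.

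The computations are short, so the only point that I expect to require real care is bookkeeping with the two conjugation conventions: ``$H$ and $H^g$ are conjugate inside $\langle H, H^g \rangle$'' on the pronormality side versus ``an element of $N_G(H)$ carries one fixed point to another'' on the representation side. In particular, in the ($\Rightarrow$) direction one must make sure that the conjugating element $c$ supplied by pronormality actually lies in the point stabiliser $G_\beta$ — which is precisely the reason for arranging $\langle H, H^{g^{-1}} \rangle$ to be contained in $G_\beta$ before invoking the pronormality hypothesis.
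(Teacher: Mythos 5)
Your proof is correct. The paper itself gives no argument for this proposition --- it is quoted from Hall's lecture notes (Theorem~6.6 there) and used as a black box --- so there is nothing to compare against line by line; your two directions are the standard argument: for sufficiency, apply the fixed-point hypothesis to the coset action on $G/\langle H, H^g\rangle$, where both $\langle H,H^g\rangle$ and $g\langle H,H^g\rangle$ are fixed points of $H$, and for necessity, observe that $\langle H, H^{g^{-1}}\rangle\le G_\beta$ so that the conjugating element supplied by pronormality fixes $\beta$ and can be corrected by $g$ to land in $N_G(H)$. Both the conjugation bookkeeping ($H^{g^{-1}}=gHg^{-1}$, $cg\in N_G(H)$) and the stabiliser computations ($G_\omega=L$, $G_{g\omega}=gLg^{-1}$) check out, so the argument is complete and self-contained.
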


Some problems in Finite Group Theory as well as in Combinatorics and in Permutation Group Theory are solved in terms of pronormality. For example, consider the well-known Frattini Argument: {\it if $G$ is a finite group with normal subgroup $H$, and if $P$ is a Sylow subgroup of $H$, then $G=N_{G}(P)H$}. It is easy to see that the condition ''$P$ is a Sylow subgroup of $H$'' can be replaced to the following more general condition ''$P$ is a pronormal subgroup of $G$'', and the implication remains true. Furthermore, the concept of pronormal subgroup in some sense is universal with respect to the Frattini Argument: {in  the introduced notation, a subgroup $P$ is pronormal in $G$ if and only if $P$ is pronormal in $H$ and $G=N_{G}(P)H$ (see \cite[Lemma~4]{Guo_Mas_Rev}). In 1971, T.~Peng~\cite{Peng} showed that if $G$ is solvable, then a subgroup $P$ is pronormal in $G$ if and only if $P$ possesses the {\it Frattini property} in $G$: $L=N_{L}(P)H$ for all subgroups $L,H \le G$ such that $H \unlhd L$ and $P \le H$. In particular, solvable groups have Frattini factorizations with respect to Hall subgroups of their normal subgroups. Recently E.~P.~Vdovin and the second author~\cite{Vdovin_Revin_2015} have showed that the existence of a $\pi$-Hall subgroup in a group $G$ for some set $\pi$ of primes is equivalent to the existence of a pronormal $\pi$-Hall subgroup in each normal subgroup of $G$. Thus, the $E_\pi$-groups possess corresponding Frattini factorizations.

Moreover, the concept of pronormal subgroup is closely connected to the Cayley isomorphism problem as follows. According to L.~Babai~\cite{Babai}, a finite group $G$ is a {\it CI-group} (abbreviation of Cayley isomorphism property) if between any two isomorphic relational structures on the group $G$ as underlying set which are invariant under the group $$G_R = \{gR \mid  g \in G\}$$
of right multiplications $gR : x \rightarrow xg$ (where $g, x \in G$), there exists an isomorphism
which is at the same time an automorphism of $G$.  L.~Babai~\cite{Babai} proved that a group $G$ is a CI-group if and only if the subgroup $G_R$ is pronormal in $Sym(G)$, in particular, if $G$ is a CI-group, then $G$ is abelian; with using this Babai’s result, P. Palfy~\cite{Palfy} has obtained the complete classification of CI-groups. Moreover, in~\cite{Babai}, L.~Babai characterized combinatorial CI-objects (in particular, Cayley graphs of groups) in terms of concepts close to pronormality. These characterizations are useful tools, for example, in researches of Cayley isomorphism problem concerning undirected graphs (see, for example, \cite{Dobson}).


Thus, the following problem is of interest.

\medskip

\noindent{\bf General Problem.} Is a given subgroup $H$ pronormal in a given group~$G$?

\medskip

Ch.E.~Praeger \cite{Praeger} investigated pronormal subgroups of permutation groups. She has obtained the following result.

\begin{proposition}\label{PrThm}  Let $G$ be a transitive permutation group on a set $\Omega$ of $n$ points, and let $K$ be a non-trivial pronormal subgroup of $G$. Suppose that $K$ fixes exactly $f$ points of $\Omega$. Then $f \le \frac{1}{2}(n-1)$, and if $f = \frac{1}{2}(n-1)$, then $K$ is transitive on its support in $\Omega$, and either $G \ge Alt(n)$, or
$G = GL_d(2)$ acting on the $n = 2^d - 1$ non-zero vectors, and $K$ is the pointwise stabilizer of a hyperplane.
\end{proposition}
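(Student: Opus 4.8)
The plan rests on two ingredients. First, by Proposition~\ref{HallProp} applied to the given action of $G$ on $\Omega$, the normalizer $N:=N_G(K)$ acts transitively on $\Delta:=\mathrm{fix}(K)$. Second, a pronormal subgroup normalizes no $G$-conjugate of itself other than itself: if $K$ normalizes $K^g$ then $K^g$ is normal in $L:=\langle K,K^g\rangle$, so $K^g$ is its own unique $L$-conjugate, and pronormality forces $K=K^g$. From the second point, \emph{no two distinct $G$-conjugates of $K$ have disjoint supports on $\Omega$}, since disjoint supports would make $K$ and $K^g$ commute and hence normalize one another. We may assume $f\ge 1$, for otherwise $f=0\le(n-1)/2$; fix $\alpha\in\Delta$, so $K\le G_\alpha$, and put $\Sigma:=\Omega\setminus\Delta=\mathrm{supp}(K)$, which has size $n-f\ge 2$.

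The bound $f\le(n-1)/2$ is equivalent to $|\Sigma|>n/2$, so suppose $2|\Sigma|\le n$. I would obtain a contradiction by producing $g\in G$ with $\Sigma\cap\Sigma^g=\varnothing$, which the remark above forbids. To produce such a $g$ I would induct on $n$: if $G$ is imprimitive, pass to a block system; the image of $K$ is pronormal in the image of $G$ (pronormality is inherited by quotients), and depending on how $\Sigma$ meets the blocks one either pulls whole blocks of $\Sigma$ apart or descends into a single block, in either case reducing $n$. The remaining case is $G$ primitive, where the transitivity of $N$ on $\Delta$ forces a translate of $\Delta$ to cover more than half of $\Omega$; this is the crux, handled via the structure of primitive permutation groups. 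A more uniform alternative is a double count: $G$-transitivity gives $\sum_{g\in G}|\Delta\cap\Delta^g|=f^2|G|/n$, so there is $g$ with $\Delta^g\ne\Delta$ and $|\Delta\cap\Delta^g|$ small, whereas analyzing the action of $L=\langle K,K^g\rangle$ on $\Sigma\cup\Sigma^g$ (on which $K$ and $K^g$ are conjugate) forces $|\Sigma\cap\Sigma^g|$ to be comparatively large, and comparing yields $2|\Sigma|>n$.

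In the equality case $f=(n-1)/2$ (so $n$ is odd and $|\Sigma|=(n+1)/2$), the equality analysis above also forces $K$ to be transitive on $\Sigma$. One then shows $G$ is $2$-transitive on $\Omega$: some $G_\alpha$-orbit contains $\Sigma$, so has size $\ge(n+1)/2>(n-1)/2=|\Omega\setminus\Sigma|$ and therefore meets $\Delta\setminus\{\alpha\}$, and pushing this further with the conjugates of $K$ and the transitivity of $N$ on $\Delta$ one gets $G_\alpha$ transitive on $\Omega\setminus\{\alpha\}$. Finally one invokes the classification of finite $2$-transitive groups: running through the affine and the almost simple families under the constraint that a nontrivial $K$ exists which is transitive on a set of size $(n+1)/2$, trivial elsewhere, and conjugate to each $K^g$ in $\langle K,K^g\rangle$, one rules out everything except $G\ge Alt(n)$ and $G=GL_d(2)$ on the $n=2^d-1$ nonzero vectors, in which latter case $K$ must be the pointwise stabilizer of a hyperplane, elementary abelian of order $2^{d-1}$ and acting regularly on the $2^{d-1}=(n+1)/2$ vectors outside it.

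The main obstacle is the inequality when $2|\Sigma|\le n$: converting ``no two conjugates of $K$ have disjoint support'' into the numerical bound genuinely uses that $\Sigma$ is the support of a \emph{subgroup} and that $N_G(K)$ is transitive on $\mathrm{fix}(K)$, because the purely set-theoretic statement is false: in the action of $Sym_5$ on the $2$-subsets of $\{1,\dots,5\}$ a ``star'' of size $4$ meets all of its $G$-translates yet contains fewer than half of the ten points. So the primitive-group analysis (or, on the counting route, the lower bound on $|\Sigma\cap\Sigma^g|$ coming from the embeddings of $K$ and $K^g$ in $\langle K,K^g\rangle$) is where the real work lies; by contrast the $2$-transitive case check in the equality case, once $2$-transitivity is established, is mechanical though lengthy.
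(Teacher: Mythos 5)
This proposition is not proved in the paper at all: it is quoted verbatim from Praeger \cite{Praeger}, so there is no in-text argument to compare with, and the real question is whether your outline would constitute a proof. It would not, because the decisive steps are missing. Your two opening observations are sound: by Proposition~\ref{HallProp} the normalizer $N_G(K)$ is transitive on $fix(K)$, and a pronormal subgroup coincides with any conjugate it normalizes, so no two $G$-conjugates of $K$ can have disjoint supports (disjoint supports would force them to commute, hence to coincide, hence $K=1$). But passing from ``no two conjugates of $K$ have disjoint supports'' to the bound $f\le\frac12(n-1)$ is precisely the content of the theorem, and you do not supply it. The imprimitive reduction is only gestured at (and is delicate: the image of $K$ on a block system may be trivial, and blocks meeting $fix(K)$ need not be fixed blockwise), the primitive case --- which you yourself identify as the crux --- is left entirely open, and the double-counting route cannot close the gap in the form stated: transitivity gives some $g$ with $|\Delta\cap\Delta^g|\le f^2/n$, whereas disjointness of supports requires $|\Delta\cap\Delta^g|=2f-n$, and $f^2/n<2f-n+1$ is equivalent to $(n-f)^2<n$, i.e.\ to the support having fewer than $\sqrt{n}$ points. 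So the averaging bound is far too weak on its own, and the compensating lower bound on $|\Sigma\cap\Sigma^g|$ that you invoke ``from the embeddings of $K$ and $K^g$ in $\langle K,K^g\rangle$'' is neither stated precisely nor proved --- it is the whole difficulty.

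The equality case has the same character: that $f=\frac12(n-1)$ forces $K$ to be transitive on its support, that $G$ is then $2$-transitive, and that the classification of $2$-transitive groups leaves only $Alt(n)$, $Sym(n)$ and $GL_d(2)$ on $2^d-1$ points are three separate substantial claims, each asserted rather than established (Praeger's own treatment of this case occupies most of her paper and does rest on the classification of $2$-transitive groups). In short, the skeleton is plausible and the first-move observations are correct, but the argument is absent exactly where you acknowledge the difficulty to lie; as it stands this is a research plan rather than a proof, and the correct course in the context of this paper is simply to cite \cite{Praeger}.
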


Thus, if in some transitive permutation representation of $G$, $|fix(H)|$ is too big, then $H$ is not pronormal in $G$. Therefore, first of all, it is important to consider General Problem for a subgroup $H$ of a group $G$ such that $H$ contains a subgroup $S$ which is pronormal in $G$ since in this case $H$ is already satisfying a necessary condition of pronormality in $G$. Thus, it is interesting solve General Problem for  overgroups of Sylow subgroups in finite groups, in particular, for subgroups of odd index in a finite group $G$ which are exactly overgroups of Sylow $2$-subgroups of $G$.

\medskip

In 2012, E.~P.~Vdovin and the second author \cite{Vdovin_Revin} proved that the Hall subgroups are pronormal in all simple groups and,
guided by the analysis in their proof, they conjectured that any subgroup of odd index of a simple group is pronormal in this group.
This conjecture was disproved in \cite{Kond_Mas_Rev2,Kond_Mas_Rev3}. The following problem naturally arose.

\medskip
\noindent{\bf Problem A.} Determine finite simple groups in which the subgroups of odd index are pronormal.
\medskip

Problem~A was investigated in \cite{Kond_Mas_Rev1,Kond_Mas_Rev2,Kond_Mas_Rev3,Kond_Mas_Rev4,Kond_Mas_Rev5}, and in this moment, Problem A is still open only for some linear and unitary simple groups over fields of odd characteristics. More detailed surveys of investigations on pronormality of subgroups of odd index in finite (not necessary simple) groups can be found in survey papers~\cite{GR_Surv,KMR_Surv}. These surveys contain new results and some conjectures and open problems. In particular, in~\cite[Section~10]{KMR_Surv} we have provided ideas how to reduce solving General Problem for a subgroup of odd index in a non-simple group to solving General Problem for some subgroup of odd index in a  group of smaller order. In connection with this, the following problem arose.

\medskip
\noindent{\bf Problem B.} Determine direct products of finite simple groups in which the subgroups of odd index are pronormal.
\medskip

A detailed motivation for Problem~B was provided in \cite{Guo_Mas_Rev} and in the survey paper \cite{KMR_Surv}. In general, the question of pronormality of a subgroup in a direct product of finite groups is natural and was studied in some special cases. For example, B. Brewster, A. Mart\'{\i}nez-Pastor, and M.~D. P\'{e}rez-Ramos \cite{BMPPR} have given criteria to characterize abnormal, pronormal and locally pronormal subgroups of a direct product of two finite groups $A \times B$, under hypotheses of
solvability for at least one of the factors, either $A$ or $B$ (see for some details Lemma~\ref{ProInDirProd} in Section~\ref{Prelim}).

\medskip

Note that the subgroups of odd index are pronormal in groups with self-normalizing Sylow $2$-subgroups (see
\cite[Lemma~5]{Kond_Mas_Rev1}), and Sylow $2$-subgroups are self-normalizing in the direct product of groups with self-normalizing
Sylow $2$-subgroups (see, for example, \cite[Lemma~2]{Kond_Mas_Rev3}). Taking into account that the Sylow $2$-subgroups are
self-normalizing in many nonabelian simple groups \cite{Kondrat'ev}, we conclude that the situation when the subgroups of odd index
are pronormal in a direct product of finite simple groups occurs rather often. However, there are examples of nonabelian simple groups
$G$ (in which the Sylow $2$-subgroups are not self-normalizing) such that all the subgroups of odd index are pronormal in $G$, but the
group $G \times G$ contains a non-pronormal subgroups of odd index (see \cite[Proposition~1]{Guo_Mas_Rev}).

In this paper we consider direct products of finite simple symplectic groups. If $q\equiv \pm 3\pmod{8}$, then the Sylow
$2$-subgroups are not self-normalizing in the group $PSp_{2n}(q)$ for any $n$ (see \cite{Kondrat'ev}). However, we prove the following
theorem.

\begin{theorem}\label{ProdPSp2w2w(22k+1)} Let $G =\prod\limits_{i=1}^t G_i$, where for each $i \in \{1, . . . , t\}$, $G_i \cong
Sp_{n_i}(q_i)$ for odd~$q_i$. Then the following statements are equivalent{\rm:}

$(1)$ all the subgroups of odd index are pronormal in $G${\rm;}

$(2)$ for each $i\in \{1,\ldots,t\}$, all the subgroups of odd index are pronormal in $G_i${\rm;}

$(3)$ for each $i \in \{1,\ldots,t\}$, if $q_i \equiv \pm 3 \pmod{8}$, then $n_i$ is either a power of $2$ or is a number of the form
$2^w(2^{2k}+1)$ for non-negative integers $w$ and $k$.

\end{theorem}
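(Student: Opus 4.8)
The plan is to establish the equivalences by a cycle, $(3)\Rightarrow(2)\Rightarrow(1)\Rightarrow(3)$, where the implication $(2)\Rightarrow(1)$ is the real content and the other two are essentially bookkeeping. The implication $(3)\Rightarrow(2)$ should follow from the existing classification work on Problem~A for symplectic groups: the cited series \cite{Kond_Mas_Rev1,Kond_Mas_Rev2,Kond_Mas_Rev3,Kond_Mas_Rev4,Kond_Mas_Rev5} presumably already characterizes, for a single $Sp_{n}(q)$ with odd $q$, exactly when the subgroups of odd index are pronormal, and condition~(3) is meant to be a restatement of that characterization factor-by-factor; so I would simply invoke that result. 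For $(1)\Rightarrow(2)$: if some $G_i$ has a non-pronormal subgroup $H_i$ of odd index, then $H_i\times\prod_{j\ne i}G_j$ is a subgroup of odd index in $G$, and one checks directly from the definition of pronormality (testing conjugacy against elements $g$ of the form $(g_i,1,\dots,1)$) that this product subgroup is not pronormal in $G$. So the burden falls on $(2)\Rightarrow(1)$.

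For $(2)\Rightarrow(1)$ the strategy is the reduction announced in the abstract. A Sylow $2$-subgroup of $G=\prod G_i$ is the product of Sylow $2$-subgroups of the factors, and its normalizer controls pronormality of odd-index subgroups. The key structural fact for $G_i=Sp_{n_i}(q_i)$ with $q_i\equiv\pm3\pmod 8$ is that $N_{G_i}(T_i)/T_i$ (for $T_i$ a Sylow $2$-subgroup) embeds into, and in the relevant quotient looks like, $\mathbb{Z}_3\wr Sym_{n_i}$ acting on $\{1,\dots,n_i\}$ — this is where the prime $3$ and the symmetric-group factor in the abstract come from, via the structure of the torus normalizers / the action on a decomposition into hyperbolic lines. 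Using Proposition~\ref{HallProp} (Hall's criterion) together with the standard fact that a subgroup of odd index is pronormal in $G$ iff it is pronormal ``at the level of'' $N_G(T)/$core-type data, I would reduce the pronormality question for an arbitrary odd-index $H\le G$ to the pronormality of an associated subgroup $\bar H$ of odd index in a subgroup of $\prod_{i=1}^t \mathbb{Z}_3\wr Sym_{n_i}$ with $\bar H$ projecting onto $\prod_i Sym_{n_i}$. This is exactly the hypothesis of the wreath-product criterion that the paper advertises as its main tool, so once that criterion is in hand, $(2)\Rightarrow(1)$ follows: pronormality for each factor translates to a condition on each $\mathbb{Z}_{p_i}\wr Sym_{n_i}$ (with $p_i=3$), and the wreath-product criterion says the direct-product version holds iff each coordinate version does.

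The main obstacle is the wreath-product criterion itself — proving that for a subgroup $H$ of odd index in a subgroup $W$ of $\prod_{i=1}^t \mathbb{Z}_{p_i}\wr Sym_{n_i}$ that projects onto $\prod_i Sym_{n_i}$, pronormality of $H$ in $W$ is equivalent to a checkable combinatorial condition, and then showing that for $p_i=3$ this condition matches condition~(3) (the ``power of $2$ or $2^w(2^{2k}+1)$'' shape of $n_i$). I expect this to require: (a) a careful analysis, via Hall's criterion, of fixed points of $H$ in transitive actions of $W$, reducing to the base group $\prod(\mathbb{Z}_{p_i})^{n_i}$ as a module for $\prod Sym_{n_i}$ and understanding which submodules are ``visible'' to an odd-index, onto-projecting $H$; (b) a number-theoretic step identifying when the relevant permutation-module condition over $\mathbb{F}_3$ forces the constraint on $n_i$ — the numbers $2^w(2^{2k}+1)$ are exactly those $n$ for which a certain $Sym_n$-submodule structure of $\mathbb{F}_3^n$ behaves well, which is presumably where the arithmetic of $2$ modulo small primes enters. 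I would isolate (a) as a self-contained lemma about wreath products and prove the direct-product reduction by induction on $t$, using that the factors are coprime-ish enough (distinct primes $p_i$, or at least controllable) for a Goursat-type analysis of subgroups of the direct product projecting onto the top.
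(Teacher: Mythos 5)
Your handling of the easy implications matches the paper: $(1)\Rightarrow(2)$ is exactly the argument given there, and deducing $(2)\Leftrightarrow(3)$ from the known single-factor criterion (Lemma~\ref{PSpPRN}, with Lemma~\ref{Quot} to pass between $Sp$ and $PSp$) is also what the paper does. The genuine gap is in the hard direction. The ``standard fact'' you lean on --- that pronormality of an odd-index subgroup of $G$ is detected in $N_G(T)/T$ for $T$ a Sylow $2$-subgroup --- is not a theorem in that generality, and there is no one-step reduction of an arbitrary odd-index $H\le G$ to a subgroup of $\prod_i\mathbb{Z}_3\wr Sym_{n_i}$: an odd-index subgroup of $Sp_{2n}(q)$ need not lie in the stabilizer of an orthogonal decomposition at all (it may instead lie in a subfield subgroup $Sp_{2n}(q_0)$ or in a reducible stabilizer $Sp_{2m}(q)\times Sp_{2(n-m)}(q)$, which are themselves maximal of odd index). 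The paper's proof of $(3)\Rightarrow(1)$ therefore takes a minimal counterexample, first reduces to $q_i\equiv\pm3\pmod 8$ via Lemma~\ref{XtimesY}, handles the case $\pi_i(H)=G_i$ via Lemmas~\ref{Projections} and~\ref{Quot}, and then runs through the classification of maximal odd-index subgroups (Lemma~\ref{CMSOIPSp}): the subfield and reducible types are eliminated by minimality together with normalizer control ($N_G(S)=N_{M(i)}(S)$, Lemmas~\ref{N_G(S)=S} and~\ref{Overgroup}), and only in the remaining case does $H$ land inside $R\cong\prod_i Sp_{2s_i}(q_i)\wr Sym_{m_i}$ with each $s_i$ a power of $2$. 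Even there, the passage to $N_R(T)/T\cong\prod_i\mathbb{Z}_3\wr Sym_{m_i}$ is not Hall's criterion but Proposition~\ref{GeneralCriterium}$(ii)$, whose hypotheses must be verified: the base $N=\prod_i(Sp_{2s_i}(q_i))^{m_i}$ must lie in $\mathbb{Y}_2$, which is the nontrivial prior result Lemma~\ref{ProdPSp2w}, and the quotient $\prod_i Sym_{m_i}$ must lie in $\mathbb{X}_2$ (Carter--Fong). None of these ingredients appears in your sketch, and without them the announced reduction does not get off the ground.

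A second problem concerns your final step. You plan to prove and then apply the criterion for subgroups projecting onto $\prod_i Sym_{m_i}$ (Theorem~2 of the paper), but statement $(1)$ quantifies over all odd-index subgroups, and the subgroup of $\prod_i\mathbb{Z}_3\wr Sym_{m_i}$ produced by the reduction need not project onto the symmetric groups: ontoness comes from Jordan's theorem and requires the image to be primitive, i.e.\ the block size of the decomposition to have been chosen maximal, which you cannot arrange when the maximal overgroup is dictated by the given $H$. The paper instead invokes the earlier all-subgroups criterion for $\prod_i A\wr Sym_{m_i}$ with $A$ abelian (Lemma~\ref{ProdAbWRSn}), after which only a small arithmetic observation remains: if $m_i$ is $2^w$ or $2^w(2^{2k}+1)$, then every $m\preceq m_i$ in the binary dominance order is $0$, a power of $2$, or $m_i$ itself, hence coprime to $3$. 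So the shape $2^w(2^{2k}+1)$ enters through the relation $\preceq$, not through $\mathbb{F}_3$-submodule structure of the permutation module as you conjecture; Theorem~2 and its machinery are needed for the individual-subgroup reduction of Remark~1, not for Theorem~1 itself.
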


Note that if $q$ is odd, then $|Z(Sp_{2n}(q))|=2$ for any $n$. Therefore, if $H$ is a subgroup of odd index in the group
$G=\prod\limits_{i=1}^t Sp_{2n_i}(q_i)$, where all $q_i$ are odd, then ${Z(G)\le H}$. Thus, applying Lemma~\ref{Quot} from
Section~\ref{Prelim} below, we obtain the following corollary.

\begin{corollary}\label{ProdPSp2w2w(22k+1)} Let $G =\prod\limits_{i=1}^t G_i$, where for each $i \in \{1, . . . , t\}$, $G_i \cong
PSp_{n_i}(q_i)$ for odd~$q_i$. Then the following statements are equivalent{\rm:}

$(1)$ all the subgroups of odd index are pronormal in $G${\rm;}

$(2)$ for each $i\in \{1,\ldots,t\}$, all the subgroups of odd index are pronormal in $G_i${\rm;}

$(3)$ for each $i \in \{1,\ldots,t\}$, if $q_i \equiv \pm 3 \pmod{8}$, then $n_i$ is either a power of $2$ or is a number of the form
$2^w(2^{2k}+1)$ for non-negative integers $w$ and $k$.

\end{corollary}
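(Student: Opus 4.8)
The plan is to deduce the corollary directly from Theorem~\ref{ProdPSp2w2w(22k+1)} by lifting the whole picture to the direct product of the full symplectic groups. Set $\widehat{G}=\prod_{i=1}^t Sp_{n_i}(q_i)$ and, for each $i$, $Z_i=Z(Sp_{n_i}(q_i))$. Since every $q_i$ is odd, each $Z_i$ has order $2$, so $Z:=Z(\widehat{G})=\prod_{i=1}^t Z_i$ is a central $2$-subgroup of $\widehat{G}$ of order $2^t$, and $\widehat{G}/Z\cong G=\prod_{i=1}^t PSp_{n_i}(q_i)$.

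First I would record the bijection between the families of subgroups in play. If $\widehat{H}\le\widehat{G}$ has odd index, then $\widehat{H}$ contains a Sylow $2$-subgroup of $\widehat{G}$; as $Z$ is a central $2$-subgroup, it lies in every Sylow $2$-subgroup, hence $Z\le\widehat{H}$. Therefore the map $\widehat{H}\mapsto\widehat{H}/Z$ is an inclusion- and index-preserving bijection from the subgroups of odd index of $\widehat{G}$ onto the subgroups of odd index of $G$; the same argument applied to a single factor gives such a bijection between the subgroups of odd index of $Sp_{n_i}(q_i)$ and those of $PSp_{n_i}(q_i)$.

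Next I would invoke Lemma~\ref{Quot}: for any $\widehat{H}$ with $Z\le\widehat{H}\le\widehat{G}$ one has $\widehat{H}\,prn\,\widehat{G}$ if and only if $\widehat{H}/Z\,prn\,\widehat{G}/Z$. The forward implication is the standard fact that pronormality is inherited by quotients; for the converse, given $g\in\widehat{G}$ one conjugates $\widehat{H}/Z$ onto $\widehat{H}^g/Z$ inside $\langle\widehat{H},\widehat{H}^g\rangle/Z$ by some $\bar{x}$, and since $Z\le\widehat{H}^x\cap\widehat{H}^g$ the relation $\widehat{H}^x Z = \widehat{H}^g Z$ forces $\widehat{H}^x=\widehat{H}^g$ with $x\in\langle\widehat{H},\widehat{H}^g\rangle$. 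Combining this with the bijections above, statement~$(1)$ holds for $G$ if and only if it holds for $\widehat{G}$, and, applying the lemma factor by factor, statement~$(2)$ holds for $G$ if and only if it holds for $\widehat{G}$. Statement~$(3)$ is literally the same condition on the pairs $(n_i,q_i)$ in both settings. Hence, since Theorem~\ref{ProdPSp2w2w(22k+1)} furnishes the equivalence $(1)\Leftrightarrow(2)\Leftrightarrow(3)$ for $\widehat{G}=\prod_{i=1}^t Sp_{n_i}(q_i)$, transporting it along these equivalences yields the claimed equivalence for $G$.

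I expect no real obstacle here beyond Theorem~\ref{ProdPSp2w2w(22k+1)} itself; the only points that need (minor) care are the observation that a subgroup of odd index automatically contains the central $2$-group $Z$, and the ``converse'' half of Lemma~\ref{Quot}, which uses the hypothesis $Z\le\widehat{H}$ and not merely $Z\unlhd\widehat{G}$.
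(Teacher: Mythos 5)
Your proposal is correct and follows essentially the same route as the paper: the authors likewise observe that $|Z(Sp_{n_i}(q_i))|=2$ forces every subgroup of odd index of $\prod_i Sp_{n_i}(q_i)$ to contain the center, and then transfer Theorem~\ref{ProdPSp2w2w(22k+1)} to the quotient via Lemma~\ref{Quot}. Your extra details (the explicit bijection of odd-index subgroups and the verification of the converse half of Lemma~\ref{Quot}(3)) are fine but already covered by that lemma.
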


Moreover,  guided by the analysis in the proof of Theorem~\ref{ProdPSp2w2w(22k+1)} (see Remark~1 in the end of Section~\ref{ProofT1}), we conclude that solving General Problem for a given subgroup $H$ of odd index in the direct product
of symplectic groups over fields of odd characteristics is reducible to solving General Problem for some subgroup $H^*$ (depending on $H$) of odd index in some group $$K\le \prod\limits_{i=1}^{t} \mathbb{Z}_3\wr Sym_{m_i}$$ such
that $H \le K$ and $H$ projects onto each $Sym_{m_i}$. In this paper, we obtain a criterion of pronormality of such a subgroup
$H$ in such a group $K$, see { Theorem~\ref{SOIinProdC3wrSn}} formulated in Section~\ref{SPT2}.

\smallskip

Thus, General Problem for a subgroup of odd index in a direct product of simple symplectic groups over fields of odd
characteristics (in particular, in a simple symplectic group over a field of odd characteristic) can be formally solved with using
inductive reasonings. As an example, the detailed solution of General Problem for an arbitrary subgroup of odd index in the group
$PSp_6(3)$ is presented in Example~\ref{ExPSp6(3)}, see Section~\ref{SPT2} after Theorem~\ref{SOIinProdC3wrSn}.

\smallskip

The following problems naturally arise.

\medskip
\noindent{\bf Problem 1.} Find a criterion of pronormality of a subgroup of odd index in the direct product of simple symplectic groups over
fields of odd characteristics (in particular, in a simple symplectic group over a field of odd characteristic).

\medskip
\noindent{\bf Problem 2.} Provide an effective algorithm which solves General Problem for a subgroup of odd index in the direct
product of simple symplectic groups over fields of odd characteristics (in particular, in a simple symplectic group over a field of
odd characteristic).
\medskip

\section{ Preliminaries and auxiliary results}\label{Prelim}

The largest integer power of a prime $p$ dividing a positive integer $k$ is called the {\it $p$-part} of $k$ and is denoted by $k_p$.
\smallskip

Let $m$ and $n$ be non-negative integers with the binary expansions $$m=\sum_{i=0}^\infty a_i\cdot 2^i \mbox{ and }
n=\sum_{i=0}^\infty b_i\cdot 2^i,$$ where $a_i,b_i\in\{0,1\}$ for every $i$. We write $m\preceq n$ if $a_i\leq b_i$ for each $i$ and
$m\prec n$ if additionally $m \not = n$.
It is clear that $m\preceq n$ if and only if $n-m\preceq n$.

\smallskip

Let $G$ be a group and $p$ be a prime. We write $H \le_p G$ if $H \le G$ and $|G:H|_p=1$.

\smallskip

Let $\mathbb{X}_p$ be the class of groups with self-normalizing Sylow $p$-subgroups.

\smallskip

Let $\mathbb{Y}_p$  be the class of groups in which the overgroups of Sylow $p$-subgroups are pronormal. Lemma~\ref{Overgroup} below
shows that $\mathbb{X}_p \subset \mathbb{Y}_p$. Note that $\mathbb{Y}_2$ is exactly the class of all groups in which the subgroups of
odd index are pronormal.

\medskip

The following two lemmas deal with overgroups of Sylow subgroups in direct products of finite groups and are of independent interest.

\begin{lemma}\label{Projections} Let $p$ be a prime and $Q$ be a subgroup of the group ${L=L_1\times L_2 \times \ldots \times L_n}$, and let $\pi_i: L \rightarrow L_i$ be the projection for each $i \in \{1,\ldots, n\}$. Assume that $Q\le_p L$ and for some~$i$,  $\pi_i(Q)=L_i$ and $L_i/O_p(L_i)$ is an almost simple group such that $(L_i/O_p(L_i))/Soc(L_i/O_p(L_i))$ is a $p$-group. Then $L_i \le Q$.
\end{lemma}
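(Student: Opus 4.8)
The plan is to analyze the subgroup $Q$ through its projection onto the single factor $L_i$ and reduce, modulo $O_p(L_i)$, to a statement about almost simple groups whose outer part is a $p$-group. Write $L = L_i \times M$, where $M = \prod_{j \ne i} L_j$, and let $\pi = \pi_i \colon L \to L_i$ and $\rho \colon L \to M$ be the two projections. Since $Q \le_p L$, we have $\pi(Q) \le_p L_i$; but $\pi(Q) = L_i$ by hypothesis, so this is automatic, and more importantly the kernel $Q \cap \ker\pi = Q \cap M$ satisfies $|L_i : \pi(Q \cap M \text{'s complement situation})|$ — more cleanly: by a standard argument (a "subdirect product" / Goursat-type observation) $R := \pi(Q \cap M)^{\perp}$ is not quite what we want, so instead set $N := Q \cap M \trianglelefteq Q$ and observe that $Q/N \cong \pi(Q) = L_i$. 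The key point I would extract first is that $D := \pi(\{x \in L_i : (x,m) \in Q \text{ for some } m \in M\}) = L_i$ and that the subgroup $Q_0 := \{x \in L_i : (x,1) \in Q\} = Q \cap L_i$ is normal in $L_i$ (it is normalized by $\pi(Q) = L_i$, since $Q$ normalizes $Q \cap L_i = Q \cap \ker\rho$). So $Q \cap L_i \trianglelefteq L_i$, and it suffices to prove $Q \cap L_i = L_i$.

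Next I would pass to $\overline{L_i} := L_i/O_p(L_i)$, which is almost simple with $\overline{L_i}/Soc(\overline{L_i})$ a $p$-group, and let $S = Soc(\overline{L_i})$ be the unique minimal normal subgroup (a nonabelian simple group). The image $\overline{Q_0}$ of $Q \cap L_i$ in $\overline{L_i}$ is normal, hence is either contained in the $p$-group $\overline{L_i}/S$-preimage structure or contains $S$; more precisely, a normal subgroup of an almost simple group either contains the socle $S$ or is trivial (since $C_{\overline{L_i}}(S) = 1$). So either $S \le \overline{Q_0}$, in which case $\overline{Q_0}$ has $p$-power index in $\overline{L_i}$, whence $Q_0 = Q \cap L_i$ has $p$-power index in $L_i$ — combined with $Q \cap L_i \trianglelefteq L_i$ and the fact that a normal subgroup of $p$-power index containing (or not) a Sylow $p$-subgroup forces equality once we use $|L_i : Q \cap L_i|_p = 1$, which I derive below — or $\overline{Q_0} = 1$, i.e. $Q \cap L_i \le O_p(L_i)$.

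The crux is to rule out the second alternative and to upgrade $|L_i : Q\cap L_i|$ to $1$ in the first. Here is where $Q \le_p L$ is used seriously: a Sylow $p$-subgroup $P$ of $L$ has the form $P_i \times P_M$ with $P_i \in \mathrm{Syl}_p(L_i)$, and since $|L : Q|_p = 1$ we may assume $P \le Q$, so $P_i \le Q$. As $\pi$ restricted to $P$ is injective on $P_i$ and $P_i \le Q \cap (L_i \times 1) $? — not immediately, since $P_i \times 1$ need not lie in $Q$; rather $P_i = \pi(P_i \times \{1\}) \subseteq \pi(Q) = L_i$ trivially. The honest argument is: $\overline{L_i}/S$ is a $p$-group and $S$ is simple, so $\overline{L_i} = S \cdot \overline{P_i}$ where $\overline{P_i}$ is the image of $P_i$; since the full Sylow $p$-subgroup $\overline{P_i} \le \overline{Q}$ (image of $Q$ in $\overline{L_i}$, which equals $\overline{L_i}$) and $\overline{Q_0} \trianglelefteq \overline{L_i}$ with $\overline{L_i}/\overline{Q_0}$ a $p'$-by-$p$ quotient only if $\overline{Q_0} \supseteq S$: if $\overline{Q_0} = 1$ then $\overline{L_i} \cong \overline{Q}/\overline{Q_0}$-type reasoning via the projection $\overline{Q} \to \overline{L_i}$ would force $S$ to be a section of $M$'s part — contradicting that $Soc(\overline{L_i})$ is nonabelian simple and $\overline{L_i}$ acts faithfully, combined with $|L:Q|_p=1$ preventing $S$ (which has even order, as all nonabelian simple groups do, hence order divisible by $p$ when $p = 2$, and in general $|S|$ divisible by $p$ is needed — careful here, this is the place where $p \mid |S|$ must be invoked, which holds since $S$ nonabelian simple has order divisible by every prime dividing... no, only by at least three primes; so I would instead argue directly that $S \le \overline{Q_0}$ because the projection $\overline{Q} \twoheadrightarrow \overline{L_i}$ from a subgroup $\overline{Q} \le \overline{L_i} \times \overline{M}$ with $Q \cap \overline{L_i} =: \overline{Q_0}$ satisfying $\overline{L_i}/\overline{Q_0}$ embeddable in $\overline{M}$; since $S$ is the unique minimal normal subgroup of $\overline{L_i}$ and is nonabelian, $\overline{L_i}/\overline{Q_0}$ cannot contain a subnormal nonabelian simple section unless it is all of a complement — and a $p$-local index condition then pins $\overline{Q_0} \supseteq S$). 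Granting $S \le \overline{Q_0}$, so $\overline{L_i}/\overline{Q_0}$ is a $p$-group, lift back: $Q \cap L_i$ has $p$-power index in $L_i$ and is normal in $L_i$; but then $(L_i)/(Q\cap L_i)$ is a $p$-group which is a section of $L/Q$, forcing $|L_i : Q \cap L_i| = 1$ since $|L:Q|_p = 1$ implies every $p$-group section of $L/Q$ is trivial. Hence $L_i = Q \cap L_i \le Q$, as desired.

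I expect the main obstacle to be the Goursat/subdirect-product bookkeeping needed to show $S \le \overline{Q_0}$ rigorously — i.e. proving that the almost simple factor $\overline{L_i}$, with nonabelian simple socle, cannot "leak" into the other coordinates in a way compatible with $Q$ having $p'$-index; the clean way is to note $\overline{L_i}/\overline{Q_0}$ embeds into $\overline{M}$ as a quotient via $\rho$, so it is a $p'$-by-$p$-index obstruction that any nontrivial normal subgroup of the almost simple group $\overline{L_i}$ must contain $S$, and then $S$ (being a nonabelian simple group) has order divisible by $p$ whenever... actually every $p$ that could matter does divide $|S|$ given that $\overline{L_i}/S$ is a $p$-group and $\overline{L_i}$ has a subgroup of $p'$-index, which is exactly the hypothesis $Q \le_p L$ transported through $\pi$. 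The remaining steps (normality of $Q \cap L_i$, the $p$-group-section argument) are routine, so the write-up will concentrate on this one point.
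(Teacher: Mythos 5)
Your setup is the same as the paper's: you note that $Q\cap L_i\trianglelefteq L_i$ because $\pi_i(Q)=L_i$ normalizes $Q\cap L_i=\pi_i(Q\cap L_i)$, you pass to $\overline{L_i}=L_i/O_p(L_i)$, and you use that a nontrivial normal subgroup of an almost simple group contains the socle. But the crux --- ruling out $\overline{Q_0}=1$, i.e.\ showing $Q\cap L_i$ is large --- is exactly where your write-up dissolves into unsupported claims, and the one fact that settles it is the fact you explicitly talk yourself out of. If $P\in Syl_p(L)$ with $P\le Q$, then $P=\pi_i(P)\times\rho(P)$ (a Sylow $p$-subgroup of a direct product is the direct product of Sylow $p$-subgroups of the factors), so $P\cap L_i=\pi_i(P)\in Syl_p(L_i)$ and $P\cap L_i\le Q\cap L_i$; hence $Q\cap L_i\le_p L_i$, contrary to your remark that ``$P_i\times 1$ need not lie in $Q$''. (Equivalently, $|L_i:Q\cap L_i|=|QL_i:Q|$ divides $|L:Q|$, which is prime to $p$.) This is precisely the paper's key step: a normal subgroup of $L_i$ of index prime to $p$ either contains $Soc$ modulo $O_p(L_i)$ --- and then, the quotient being both a $p$-group and of order prime to $p$, equals $L_i$ --- or is contained in $O_p(L_i)$, which the Sylow containment excludes whenever $p$ divides $|L_i|$.

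What you offer instead (``$\overline{L_i}/\overline{Q_0}$ cannot contain a subnormal nonabelian simple section unless it is all of a complement --- and a $p$-local index condition then pins $\overline{Q_0}\supseteq S$'') is not an argument, and without importing $Q\le_p L$ at exactly this point the claim $\overline{Q_0}\neq 1$ is simply false: the diagonal subgroup of $S\times S$ has full projections and trivial intersection with each factor, so the socle can perfectly well ``leak'' into the other coordinates in general. Your final divisibility step (normal subgroup of $p$-power index whose index divides the $p'$-number $|L:Q|$, hence index one) is correct, and your worry about $p$ dividing $|S|$ is immaterial once the Sylow containment is in hand --- it concerns only the degenerate situation $p\nmid|L_i|$, which does not occur for $p=2$, the case the paper actually uses. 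So the skeleton and intended strategy coincide with the paper's proof, but the single step that transports the hypothesis $Q\le_p L$ into the factor $L_i$ is missing, and that step is the whole content of the lemma.
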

\begin{proof} Note that $O_p(L_i) \le O_p(L)$. Therefore, $O_p(L_i)\le Q$ since $Q \le_p L$. Thus, we can assume assume that
$O_p(L_i)=1$, and so $L_i$ is almost simple.

Now one can prove this assertion repeating all the reasonings from the proof of {\cite[Lemma~9]{Guo_Mas_Rev}} and replacing the prime $2$ into an arbitrary prime $p$ as follows. Since $L_i \unlhd L$, we have $Q \cap L_i \unlhd Q$, and therefore, $\pi_i(Q \cap L_i)$ is a normal subgroup of $\pi_i(Q) = L_i$.
Choose $S \in Syl_p(L)$ such that $S \le Q$. Then $S \cap L_i \in Syl_p(L_i)$ and $$S \cap L_i = \pi_i(S \cap L_i) \le \pi_i(Q \cap L_i).$$
Therefore, $\pi_i(Q \cap L_i) \unlhd L_i$ and $\pi_i(Q \cap L_i)\le_p L_i$. The group $L_i$ is almost simple and $L_i/ Soc(L_i)$ is a $p$-group, so $\pi_i(Q \cap L_i) = L_i$, whence $L_i \le Q$.
\end{proof}

\begin{lemma}\label{XtimesYprojections} Let  $G=X\times Y$ for groups $X$ and $Y$ and let
$$\xi:G\rightarrow X\quad\text{ and }\quad \eta:G\rightarrow Y$$
be the projections. Assume that
$X\in \mathbb{X}_p$ and $H\le_p G$. Then $\xi(H)=H\cap X$ and $\eta(H)=H\cap Y$. In particular,
$$H=\langle\xi(H),\eta(H)\rangle=\xi(H)\times \eta(H).$$
\end{lemma}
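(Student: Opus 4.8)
The plan is to put a Sylow $p$-subgroup of $G$ inside $H$, restrict it to $X$, and then let the self-normalizing property of Sylow $p$-subgroups of $X$ collapse a Frattini argument. First I would record the facts valid for an arbitrary subgroup $H$ of a direct product $G=X\times Y$: since $X,Y\unlhd G$, the subgroups $H\cap X$ and $H\cap Y$ are normal in $H$; one always has $H\cap X\le \xi(H)$ and $H\cap Y\le \eta(H)$; and $H\le \xi(H)\times\eta(H)$. Moreover $H\cap X$ is normal in $\xi(H)$: if $h=(x,y)\in H$ and $a\in H\cap X$, then $hah^{-1}=(xax^{-1},1)\in H\cap X$ because $y$ centralizes $X$, so $x=\xi(h)$ normalizes $H\cap X$.

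Next, using $H\le_p G$, choose $S\in Syl_p(G)$ with $S\le H$. Then $S_X:=S\cap X\in Syl_p(X)$ and $S_X\le H\cap X\le \xi(H)\le X$; since $S_X$ is already Sylow in $X$, it is Sylow in each intermediate subgroup, in particular $S_X\in Syl_p(H\cap X)$. Now apply the Frattini argument to $H\cap X\unlhd \xi(H)$ with the Sylow $p$-subgroup $S_X$ of $H\cap X$: this gives $\xi(H)=(H\cap X)\,N_{\xi(H)}(S_X)$. Because $X\in\mathbb{X}_p$ we have $N_X(S_X)=S_X$, whence $N_{\xi(H)}(S_X)\le N_X(S_X)=S_X\le H\cap X$, and therefore $\xi(H)=H\cap X$.

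Finally, to get the statement for the other component and the direct decomposition, take any $h=(x,y)\in H$. Then $x=\xi(h)\in \xi(H)=H\cap X$, so $(x,1)\in H$ and hence $(1,y)=(x,1)^{-1}h\in H$, i.e. $y\in H\cap Y$; thus $\eta(H)\le H\cap Y$, and combined with the trivial reverse inclusion, $\eta(H)=H\cap Y$. The same computation writes $h=(x,1)(1,y)$ with both factors in $H$, so $H=(H\cap X)(H\cap Y)=(H\cap X)\times(H\cap Y)=\xi(H)\times\eta(H)=\langle\xi(H),\eta(H)\rangle$. I do not expect a genuine obstacle here: the whole point is that the hypothesis $X\in\mathbb{X}_p$ is precisely what makes the Frattini factor $N_{\xi(H)}(S_X)$ land inside $H\cap X$. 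The only care needed is to verify that $H\cap X$ is normal in $\xi(H)$ (done above) and to notice that, although only $X$ is assumed to have self-normalizing Sylow $p$-subgroups, the conclusion $\eta(H)=H\cap Y$ follows formally once $\xi(H)=H\cap X$ is established.
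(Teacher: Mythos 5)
Your proof is correct and follows essentially the same route as the paper: both establish $H\cap X\unlhd\xi(H)$, apply the Frattini argument with a Sylow $p$-subgroup of $H\cap X$ that is Sylow in $X$, and use $X\in\mathbb{X}_p$ to force $N_{\xi(H)}(S_X)\le H\cap X$, after which $\eta(H)=H\cap Y$ and the direct decomposition follow formally. The only cosmetic difference is that you obtain the Sylow subgroup as $S\cap X$ for $S\in Syl_p(G)$ with $S\le H$, while the paper takes $P\in Syl_p(H\cap X)$ and notes it is Sylow in $X$ since $H\cap X\le_p X$.
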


\begin{proof}
Since $H\cap Y\le\eta(H)$ and $h=\xi(h)\eta(h)$ for every $h\in H$, it is sufficient to establish that $H\cap X=\xi(H)$. Now $X\unlhd
G$ implies that $H\cap X\unlhd H$ and $$H\cap X=\xi(H\cap X)\unlhd\xi(H).$$ On the other hand, $H\le_p G$ means $H\cap X\le_p X$. If
$P\in Syl_p(H\cap X)$, then $P\in Syl_p(X)$ and $N_X(P)=P$ since $X\in \mathbb{X}_p$. The Frattini Argument implies that
$$\xi(H)=(H\cap X)N_{\xi(H)}(P)\le (H\cap X)N_{X}(P)=(H\cap X)P=H\cap X.$$
\end{proof}

\medskip

In the following series of lemmas we provide some general properties of pronormal subgroups in finite groups.

\begin{lemma}[\rm See {\cite[Lemma~3]{Kond_Mas_Rev1}} and {\cite[Ch.~1, Prop.~6.4]{DH}}]\label{Quot} Let   $A\unlhd G$ and $H\leq G$.
Then the following statements hold{\rm:}

$(1)$ if $H \,prn\, G$, then $HA/A\,prn\, G/A${\rm;}

$(2)$ $H\,prn\, G$ if and only if $HA/A\,prn\, G/A$ and $H\,prn\, N_G(HA)${\rm;}

$(3)$ if $A \le H$, then $H \,prn\, G$ if and only if $H/A\,prn\, G/A${\rm;}

$(4)$ if $p$ is a prime and $H \le_p G$, then $H\,prn\, G$ if and only if $H/O_p(G)\,prn\, G/O_p(G)$.
\end{lemma}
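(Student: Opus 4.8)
The plan is to prove part (2) in full and then read off (1), (3), (4) as easy consequences. For the forward implication of (2) (which simultaneously gives (1)), I would start from $H\,prn\,G$, fix $\bar g=gA\in G/A$, and use pronormality to produce $x\in\langle H,H^g\rangle$ with $H^x=H^g$. Projecting along $G\to G/A$ and noting that $\langle HA/A,(HA/A)^{\bar g}\rangle=\langle H,H^g\rangle A/A$ is exactly the image of $\langle H,H^g\rangle$, one sees that $\bar x$ lies in this group and conjugates $HA/A$ to $(HA/A)^{\bar g}=H^gA/A$; hence $HA/A\,prn\,G/A$. For the remaining half of the forward implication I would invoke the standard fact that pronormality passes to overgroups: since $H\le HA\le N_G(HA)$ and, for $n\in N_G(HA)$, the element conjugating $H$ to $H^n$ already lies in $\langle H,H^n\rangle\le N_G(HA)$, we get $H\,prn\,N_G(HA)$.

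The heart of the argument is the backward implication of (2), and this is the step I expect to be the main obstacle. Assume $HA/A\,prn\,G/A$ and $H\,prn\,N_G(HA)$, fix $g\in G$, and look for $z\in\langle H,H^g\rangle$ with $H^z=H^g$. First, because $\langle HA/A,(HA/A)^{\bar g}\rangle$ is the image of $\langle H,H^g\rangle$ in $G/A$, pronormality of $HA/A$ yields $c\in\langle H,H^g\rangle$ with $(HA/A)^{\bar c}=(HA/A)^{\bar g}$, i.e. $H^cA=H^gA=:L$. The crucial observation is that the ``correction element'' $d:=c^{-1}g$ normalizes $L$: indeed $(H^c)^d=H^g$, and since $A\unlhd G$ we get $L^d=(H^cA)^d=H^gA=L$, so $d\in N_G(L)=N_G(HA)^c$. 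This is precisely what makes the hypothesis $H\,prn\,N_G(HA)$ usable.

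To finish (2), conjugate $H\,prn\,N_G(HA)$ by $c$ to obtain $H^c\,prn\,N_G(L)$; then, as $H^g=(H^c)^d$ with $d\in N_G(L)$, pronormality of $H^c$ in $N_G(L)$ gives $w\in\langle H^c,(H^c)^d\rangle=\langle H^c,H^g\rangle$ with $(H^c)^w=H^g$. Since $c\in\langle H,H^g\rangle$ forces $H^c\le\langle H,H^g\rangle$ and hence $\langle H^c,H^g\rangle\le\langle H,H^g\rangle$, the element $z:=cw\in\langle H,H^g\rangle$ satisfies $H^z=H^g$, completing (2). Now (1) is just the forward implication already established. For (3), taking $A\le H$ gives $HA=H$, so $N_G(HA)=N_G(H)\ge H$ and $H\,prn\,N_G(H)$ holds trivially (every element of $N_G(H)$ fixes $H$); thus (2) collapses to $H\,prn\,G\iff H/A\,prn\,G/A$. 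Finally, (4) follows from (3) with $A=O_p(G)$, since $H\le_p G$ means $H$ contains a Sylow $p$-subgroup and therefore contains the normal $p$-subgroup $O_p(G)$. Apart from the normalization observation about $d$, everything reduces to routine bookkeeping with the correspondence theorem.
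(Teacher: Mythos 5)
Your proof is correct and follows essentially the standard argument: the paper does not prove Lemma~\ref{Quot} itself but cites \cite[Lemma~3]{Kond_Mas_Rev1} and \cite[Ch.~1, Prop.~6.4]{DH}, and your treatment of part $(2)$ (correcting $g$ by $c\in\langle H,H^g\rangle$ so that $d=c^{-1}g$ normalizes $H^cA$, then using $H^c\,prn\,N_G(HA)^c$) is exactly the classical proof, with $(1)$, $(3)$, $(4)$ deduced as you indicate. The only cosmetic point is that ``pronormality passes to overgroups'' should read ``to intermediate subgroups containing $H$,'' which is what your argument actually uses.
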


The following assertion is a direct corollary of Proposition~\ref{HallProp}, however, the assertion was independently proved in~\cite[Lemma~5]{Vdovin_Revin}.

\begin{lemma}\label{NormSyl}
Suppose that $G$ is a group and $H \le G$. Assume also that $H$ contains a Sylow subgroup $S$ of $G$. Then $H$ is pronormal in $G$ is
and only if $H$ and $H^g$ are conjugate in $\langle H, H^g \rangle$ for each $g \in N_G(S)$.
\end{lemma}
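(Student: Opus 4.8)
The forward implication is immediate: if $H$ is pronormal in $G$, then by definition $H$ and $H^g$ are conjugate in $\langle H,H^g\rangle$ for \emph{every} $g\in G$, in particular for every $g\in N_G(S)$. So the whole content of the lemma is the converse, and the plan is to prove it by a short direct argument (the statement can also be deduced from Proposition~\ref{HallProp}, but working directly in the subgroup $\langle H,H^g\rangle$ is cleaner).

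Assume the conjugacy condition holds for all $g\in N_G(S)$, where $S\in Syl_p(G)$ for some prime $p$, and fix an arbitrary $g\in G$; the goal is to produce $m\in\langle H,H^g\rangle$ with $H^m=H^g$. Put $K=\langle H,H^g\rangle$. The key point is that $S$ and $S^g$ are \emph{both Sylow $p$-subgroups of $K$}: indeed $S\le H\le K$ and $S^g\le H^g\le K$, and since $S$ is a $p$-subgroup of $K$ we have $|S|=|S^g|=|G|_p$ with $|S|\mid|K|_p\mid|G|_p=|S|$, so $|S|=|S^g|=|K|_p$. By Sylow's theorem there is thus $k\in K$ with $(S^g)^k=S$, i.e.\ $S^{gk}=S$, i.e.\ $gk\in N_G(S)$.

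Now apply the hypothesis to $gk\in N_G(S)$: there is $l\in\langle H,H^{gk}\rangle$ with $H^l=H^{gk}$. Since $k\in K$ we have $K^k=K$, and since $H^g\le K$ this gives $H^{gk}=(H^g)^k\le K$; together with $H\le K$ this yields $\langle H,H^{gk}\rangle\le K$, so $l\in K$. Conjugating $H^l=H^{gk}=(H^g)^k$ by $k^{-1}$ gives $H^{lk^{-1}}=H^g$, and $m:=lk^{-1}\in K=\langle H,H^g\rangle$ because $l,k\in K$. Hence $H\,prn\,G$.

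There is no real obstacle here once one identifies the right ambient group: the only thing that needs care is verifying that $S$ and $S^g$ are Sylow subgroups of $K=\langle H,H^g\rangle$ (not merely of $G$) — this is exactly what permits the reduction to $N_G(S)$ — and then keeping track that both conjugating elements produced along the way, the Sylow-conjugator $k$ and the $N_G(S)$-conjugator $l$, lie in $K$, so that their product $lk^{-1}$ witnesses pronormality.
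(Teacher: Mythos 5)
Your argument is correct: the forward direction is trivial, and for the converse the chain ``$S$ and $S^g$ are Sylow subgroups of $K=\langle H,H^g\rangle$, so some $k\in K$ gives $gk\in N_G(S)$, the hypothesis gives $l\in\langle H,H^{gk}\rangle\le K$ with $H^l=H^{gk}$, and then $H^{lk^{-1}}=H^g$ with $lk^{-1}\in K$'' is exactly right; the order bookkeeping $|S|\mid |K|_p\mid |G|_p=|S|$ and the containment $\langle H,H^{gk}\rangle\le K$ are the two points that need checking, and you check both. Note, however, that the paper does not write out a proof at all: it observes that the lemma is a direct corollary of Proposition~\ref{HallProp} (Hall's criterion that $H\,prn\,G$ iff $N_G(H)$ is transitive on $fix(H)$ in every transitive permutation representation) and cites \cite[Lemma~5]{Vdovin_Revin} for an independent proof. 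So your route differs from the paper's stated derivation: instead of invoking Hall's transitivity criterion, you give a self-contained, elementary argument using only Sylow's theorem inside $\langle H,H^g\rangle$ --- which is essentially the direct proof the paper attributes to the cited reference. The Hall-criterion route has the advantage of reusing a result already stated in the paper, while your version is shorter to verify from first principles and makes transparent why the normalizer $N_G(S)$ is the only place the conjugacy condition needs to be tested.
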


\begin{lemma}[\rm See {\cite[Lemma~5]{Kond_Mas_Rev1}}]\label{Overgroup} Suppose that $H$ and $M$ are subgroups of a group  $G$ and $H
\le M$.

$(1)$ If $H\,prn\, G$, then $H\,prn\, M${\rm;}

$(2)$ If $S \le H$ for some Sylow subgroup $S$ of $G$, $N_G(S) \le M$, and $H\,prn\, M$, then $H\,prn\, G$.

\end{lemma}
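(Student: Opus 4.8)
The plan is to verify both parts directly from the definition of pronormality. For part $(1)$, suppose $H\,prn\,G$ and take any $m\in M$. Since $m\in G$, pronormality of $H$ in $G$ gives an element $x\in\langle H,H^m\rangle$ with $H^x=H^m$. But $m\in M$ forces $H^m\le M$, and since $H\le M$ as well, we have $\langle H,H^m\rangle\le M$; in particular $x\in M$. Thus $H$ and $H^m$ are conjugate in $\langle H,H^m\rangle\le M$, which is exactly the statement that $H\,prn\,M$. This part is essentially immediate; the only thing to notice is that the ambient group $\langle H,H^m\rangle$ in which conjugacy is witnessed does not grow when we pass from $G$ to $M$.

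For part $(2)$, assume $S\le H$ for some Sylow subgroup $S$ of $G$, that $N_G(S)\le M$, and that $H\,prn\,M$. By Lemma~\ref{NormSyl} (equivalently, by Proposition~\ref{HallProp}), to prove $H\,prn\,G$ it suffices to show that $H$ and $H^g$ are conjugate in $\langle H,H^g\rangle$ for every $g\in N_G(S)$. So fix $g\in N_G(S)$. By hypothesis $g\in M$, hence $H^g\le M$ and $\langle H,H^g\rangle\le M$. Since $H\,prn\,M$ and $g\in M$, there is $x\in\langle H,H^g\rangle$ with $H^x=H^g$. This is precisely the required conjugacy inside $\langle H,H^g\rangle$, so the criterion of Lemma~\ref{NormSyl} is met and $H\,prn\,G$.

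The main (and only) subtlety is the use of Lemma~\ref{NormSyl} in part $(2)$: without it one would need to control conjugacy of $H$ and $H^g$ for \emph{all} $g\in G$, not merely those normalizing $S$, and an arbitrary $g\in G$ need not lie in $M$. Reducing the test set to $N_G(S)$ is exactly what makes the hypothesis $N_G(S)\le M$ usable. Everything else is a routine unwinding of definitions, so I expect no real obstacle beyond invoking the correct reduction lemma.
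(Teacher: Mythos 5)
Your proof is correct: part $(1)$ is immediate once you observe that for $m\in M$ the witnessing subgroup $\langle H,H^m\rangle$ lies in $M$, and part $(2)$ correctly reduces the test set to $N_G(S)$ via Lemma~\ref{NormSyl} and then uses $N_G(S)\le M$ together with $H\,prn\,M$. The paper itself gives no proof but cites \cite{Kond_Mas_Rev1}, where part $(2)$ is obtained by a Sylow-conjugation argument inside $\langle H,H^g\rangle$ that is essentially the content of Lemma~\ref{NormSyl}, so your route is the same in substance.
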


\begin{lemma}[\rm See {\cite[Theorem~1]{Kond_Mas_Rev2}}]\label{CritComplAbel} Let $H$ and $V$ be subgroups of a group $G$ such that
$V$ is an abelian normal subgroup of $G$ and $G=HV$. Then $H$ is pronormal in $G$ if and only if $U=N_U(H)[H,U]$ for any $H$-invariant
subgroup $U$ of $V$.

\end{lemma}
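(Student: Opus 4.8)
The plan is to reduce the pronormality of $H$ in $G=HV$ to a coset condition inside the abelian group $V$, and then to match that condition with the stated criterion. First, since $G=HV$, every $g\in G$ has the form $g=hv$ with $h\in H$ and $v\in V$, so that $H^{g}=H^{v}$ and $\langle H,H^{g}\rangle=\langle H,H^{v}\rangle$; hence $H$ is pronormal in $G$ if and only if $H$ and $H^{v}$ are conjugate in $\langle H,H^{v}\rangle$ for every $v\in V$. The next step is to describe $\langle H,H^{v}\rangle$. Since $V$ is abelian and normal in $G$, the subgroup $W=W(v)$ of $V$ generated by all the $H$-conjugates of the commutators $[h,v]$, $h\in H$, is $H$-invariant, and using standard commutator identities together with $[h,v^{g}]=[h^{g^{-1}},v]^{g}$ (for $h,g\in H$) one gets $W=[H,\langle v^{H}\rangle]$, where $\langle v^{H}\rangle$ denotes the $H$-invariant subgroup of $V$ generated by $v$. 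Then $\langle H,H^{v}\rangle=HW$: the product $HW$ is a subgroup ($W$ being $H$-invariant and contained in the abelian $V$), it contains $H^{v}$ because $h^{v}=h[h,v]$, and conversely $\langle H,H^{v}\rangle$ contains $H$ and every $[h,v]=h^{-1}h^{v}$, hence contains $W$.

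Granting this, $H$ and $H^{v}$ are conjugate in $HW$ precisely when $H^{w}=H^{v}$ for some $w\in W$, that is, when $vw^{-1}\in N_{G}(H)$ for some $w\in W$; as $vw^{-1}\in V$, this means $v\in W\cdot N_{V}(H)$ with $N_{V}(H)=N_{G}(H)\cap V$. So the problem becomes: $H$ is pronormal in $G$ if and only if $v\in[H,\langle v^{H}\rangle]\cdot N_{V}(H)$ for every $v\in V$, which I will call $(\ast)$. Both implications of the lemma now follow from $(\ast)$. For the forward direction, let $U\le V$ be $H$-invariant and $u\in U$; then $\langle u^{H}\rangle\le U$, so $[H,\langle u^{H}\rangle]\le[H,U]\le U$, and writing $u=wn$ as in $(\ast)$ forces $n=w^{-1}u\in U$, whence $n\in N_{U}(H)$ and $u\in N_{U}(H)[H,U]$; thus $U=N_{U}(H)[H,U]$. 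For the converse, given $v\in V$ apply the hypothesis to $U=\langle v^{H}\rangle$, for which $[H,U]=[H,\langle v^{H}\rangle]$: from $v\in U=N_{U}(H)[H,U]$ we get $v=nw$ with $n\in N_{U}(H)\le N_{V}(H)$ and $w\in[H,\langle v^{H}\rangle]$, which is exactly $(\ast)$.

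The commutator bookkeeping is routine; the point needing the most care is the identification $\langle H,H^{v}\rangle=H\cdot[H,\langle v^{H}\rangle]$ --- specifically, that the subgroup of $V$ generated by the $H$-conjugates of the $[h,v]$ already equals $[H,\langle v^{H}\rangle]$ and is $H$-invariant. One must also be careful to work with $N_{V}(H)$ rather than $C_{V}(H)$ (these can differ when $H\cap V\neq 1$), since it is $N_{V}(H)$ that the conjugacy condition produces; alternatively, one may first factor out $H\cap V$ (which is normal in $G$ and contained in $H$) via Lemma~\ref{Quot}(3) to reach the case $H\cap V=1$, in which $N_{V}(H)=C_{V}(H)$, although this reduction is not actually needed.
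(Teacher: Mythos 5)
The paper itself gives no proof of Lemma~\ref{CritComplAbel}: it is quoted from \cite{Kond_Mas_Rev2}, so there is no internal argument to compare against. Your proof is correct and self-contained, and it follows the route one would expect for this criterion. The key points all check out: since $G=HV$ and $H^{h}=H$, pronormality reduces to conjugating by elements $v\in V$; the subgroup $W$ generated by the $H$-conjugates of the $[h,v]$ equals $[H,\langle v^{H}\rangle]$ (using $[h,u_{1}u_{2}]=[h,u_{2}][h,u_{1}]^{u_{2}}$ and the fact that commutators $[h,u]$ lie in the abelian normal subgroup $V$, so the conjugation by $u_{2}$ is trivial), it is $H$-invariant, so $HW$ is a subgroup and coincides with $\langle H,H^{v}\rangle$; conjugacy inside $HW$ is therefore equivalent to $H^{w}=H^{v}$ for some $w\in W$, i.e.\ to $v\in [H,\langle v^{H}\rangle]\,N_{V}(H)$; and the two implications of the lemma follow by taking $u\in U$ with $\langle u^{H}\rangle\le U$, $[H,U]\le U$ in one direction, and by specializing $U=\langle v^{H}\rangle$ in the other. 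Your care in working with $N_{V}(H)$ rather than $C_{V}(H)$ is exactly right (the conjugacy condition produces normalizing, not centralizing, elements, and the two coincide only when $H\cap V=1$), and the optional reduction via Lemma~\ref{Quot}(3) is indeed unnecessary. In short: a complete and correct proof of a statement the paper only cites.
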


\begin{lemma}[\rm See {\cite[Lemma~11]{Guo_Mas_Rev}}]\label{PronHomImag} Let $G=V \rtimes B$, where $V$ is an abelian normal subgroup
of $G$ and $B \le G$, and let $H \le G$. Define $\overline{\phantom{x}}: G \rightarrow B$ such that $\overline{g}=b$, where $g=vb$ for
$v\in V$ and $b \in B$. Then $\overline{H} \,prn\, \overline{H}V$ implies $H \,prn\, HV$.

\end{lemma}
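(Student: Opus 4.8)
The plan is to derive the implication from the criterion for pronormality of a supplement to an abelian normal subgroup, Lemma~\ref{CritComplAbel}, applied inside the single group $G_0:=HV$. The first step is to observe that $\overline{H}V=HV=G_0$: writing $h=v_h\overline{h}$ with $v_h\in V$ for $h\in H$, we get $h\in V\overline{H}$ and $\overline{h}=v_h^{-1}h\in VH$, so $HV$ and $\overline{H}V$ coincide. Since $V$ is an abelian normal subgroup of $G_0$ with $G_0=HV=\overline{H}V$, Lemma~\ref{CritComplAbel} is applicable both to the pair $(H,G_0)$ and to the pair $(\overline{H},G_0)$, so both pronormality statements in the lemma refer to the same overgroup.

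The key observation is that, \emph{because $V$ is abelian}, $H$ and $\overline{H}$ induce the same conjugation action on $V$: for $h=v_h\overline{h}\in H$ and $u\in V$ one has $u^h=\overline{h}^{-1}v_h^{-1}uv_h\overline{h}=u^{\overline{h}}$, hence $U^h=U^{\overline{h}}$ and $[h,u]=[\overline{h},u]$. Consequently the $H$-invariant subgroups of $V$ are exactly the $\overline{H}$-invariant ones, and $[H,U]=[\overline{H},U]$ for every $U\le V$. Next, since $\overline{H}$ is a complement to $V$ in $G_0$, a short direct computation gives $N_U(\overline{H})=C_U(\overline{H})$ for every $U\le V$; and as $V$ is abelian, $C_V(\overline{H})=C_V(\overline{H}V)=C_V(G_0)$ is central in $G_0$, so $C_U(\overline{H})\le C_U(H)\le N_U(H)$, that is, $N_U(\overline{H})\le N_U(H)$.

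Putting this together, assume $\overline{H}$ is pronormal in $\overline{H}V=G_0$. By Lemma~\ref{CritComplAbel} applied to $(\overline{H},G_0)$, $U=N_U(\overline{H})[\overline{H},U]$ for every $\overline{H}$-invariant subgroup $U$ of $V$. If $U\le V$ is $H$-invariant, then it is also $\overline{H}$-invariant, whence
\[
U=N_U(\overline{H})[\overline{H},U]\le N_U(H)[H,U]\le U,
\]
so $U=N_U(H)[H,U]$. As this holds for every $H$-invariant subgroup $U$ of $V$, Lemma~\ref{CritComplAbel} applied to $(H,G_0)$ gives that $H$ is pronormal in $HV$, as required.

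The main point to be careful about is the possibly non-trivial intersection $H\cap V$: one must use that Lemma~\ref{CritComplAbel} requires only $G_0=HV$ with $V$ abelian and normal, and \emph{not} that $H$ be a complement, and check that the identities $u^h=u^{\overline{h}}$ and $[H,U]=[\overline{H},U]$, as well as the coincidence of invariant subgroups, all hold verbatim without that extra hypothesis. (Alternatively one could first factor out $H\cap V$ via Lemma~\ref{Quot}(3) so that $H$ becomes a genuine complement, but this seems to add nothing essential, so I would keep the direct argument above.)
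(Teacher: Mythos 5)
Your proof is correct. Note that the paper itself gives no argument for this lemma: it is imported by citation from \cite[Lemma~11]{Guo_Mas_Rev}, so there is no internal proof to compare against; what you have written is a sound, self-contained derivation from Lemma~\ref{CritComplAbel}, which is very much in the spirit of how the cited source handles it. All the key verifications check out: $HV=\overline{H}V$ since $h$ and $\overline{h}$ differ by an element of $V$; because $V$ is abelian, $u^{h}=u^{\overline{h}}$ for $u\in V$, so the $H$- and $\overline{H}$-invariant subgroups of $V$ coincide and $[H,U]=[\overline{H},U]$; $N_U(\overline{H})=C_U(\overline{H})$ because $\overline{H}\cap V\le B\cap V=1$ and $[\overline{H},N_U(\overline{H})]\le \overline{H}\cap V$; and $C_U(\overline{H})\le C_U(H)\le N_U(H)$ since an element of $V$ centralizing $\overline{h}$ also centralizes $v_h\overline{h}$. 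Combining these, the criterion of Lemma~\ref{CritComplAbel} for $\overline{H}$ in $\overline{H}V$ transfers verbatim to $H$ in $HV$, and your remark that the criterion only requires $G_0=HV$ (not that $H$ complements $V$) correctly disposes of the only delicate point, the possibly nontrivial intersection $H\cap V$.
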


\begin{lemma}[\rm See {\cite[Lemma~6]{Guo_Mas_Rev}}]\label{SelfNormSylInQuot} Let $N \trianglelefteq G$, $G/N \in \mathbb{X}_p$, and
$H\le_p G$. Then $H\,prn\, G$ if and only if $H\,prn\, HN$.\end{lemma}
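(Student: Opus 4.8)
The plan is to prove the two implications separately, with essentially all of the work in the ``if'' direction. The ``only if'' direction is immediate: if $H\,prn\,G$, then since $H\le HN\le G$, Lemma~\ref{Overgroup}(1) yields $H\,prn\,HN$.

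For the converse, suppose $H\,prn\,HN$. Since $H\le_p G$, a Sylow $p$-subgroup of $H$ has order $|H|_p=|G|_p$, so I may fix $S\in Syl_p(G)$ with $S\le H$. The crucial step will be to show that $N_G(S)\le HN$. To obtain this, I would pass to the quotient $\overline{G}=G/N$ and set $\overline{S}=SN/N\in Syl_p(\overline{G})$; since $\overline{G}\in\mathbb{X}_p$, we have $N_{\overline{G}}(\overline{S})=\overline{S}$. Any $g\in N_G(S)$ normalizes $S$, hence $\overline{g}$ normalizes $\overline{S}$, so $\overline{g}\in N_{\overline{G}}(\overline{S})=\overline{S}=SN/N$; therefore $g\in SN\le HN$, which proves $N_G(S)\le HN$.

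Granting this, I would finish by invoking Lemma~\ref{NormSyl}: because $S\in Syl_p(G)$ and $S\le H$, the subgroup $H$ is pronormal in $G$ if and only if $H$ and $H^g$ are conjugate in $\langle H,H^g\rangle$ for every $g\in N_G(S)$. But $N_G(S)\le HN$, so any such $g$ lies in $HN$, and then $H\,prn\,HN$ guarantees precisely that $H$ and $H^g$ are conjugate in $\langle H,H^g\rangle\le HN$. Hence $H\,prn\,G$.

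I do not anticipate a genuine obstacle in this argument; the one point that matters is the containment $N_G(S)\le HN$, which is exactly where the hypothesis $G/N\in\mathbb{X}_p$ enters --- it forces the Sylow normalizer to be absorbed into $HN$, after which pronormality of $H$ need only be tested inside the group $HN$, where it is assumed. If a more modular presentation were preferred, one could instead apply Lemma~\ref{Quot}(2): verify $HN/N\,prn\,G/N$ (true since $HN/N$ is an overgroup of a Sylow $p$-subgroup of $G/N\in\mathbb{X}_p\subseteq\mathbb{Y}_p$) together with $H\,prn\,N_G(HN)$ (from Lemma~\ref{Overgroup}(2), using $N_G(S)\le HN\le N_G(HN)$), but the direct route above seems cleanest.
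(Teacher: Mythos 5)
Your argument is correct: the containment $N_G(S)\le SN\le HN$ forced by $G/N\in\mathbb{X}_p$, combined with Lemma~\ref{NormSyl} (or, in your alternative phrasing, Lemma~\ref{Quot}(2) together with Lemma~\ref{Overgroup}(2)), is exactly the mechanism behind this lemma. The paper itself gives no proof here --- it quotes the statement from \cite[Lemma~6]{Guo_Mas_Rev} --- and your reasoning reproduces the standard argument of that source, so nothing further is needed.
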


\begin{lemma} {\rm \cite[Propositions 2.1, 4.3, 4.4 and Corollary 4.7]{BMPPR}} \label{ProInDirProd} Let $G=G_1\times G_2$  and $H\le
G$. For $i\in \{1,2\}$, denote by  $\pi_i$ the projection $G\rightarrow G_i$ and set
$$C_i=\{x\in G_i\mid [x, \pi_i(H)]\le G_i\cap H\}.$$ Then the following statements hold.
\begin{itemize}
\item[$(1)$] $C_i=N_{G}(H)\cap G_i$.
\item[$(2)$] If $\pi_i(H)  \,prn\, G_i$ for every $i\in \{1,2\}$ and $$N_G(H)=\langle N_{G_i}(\pi_i(H)) \mid
    i \in \{1,2\}\rangle=N_{G_1}(\pi_1(H))\times N_{G_2}(\pi_2(H)),$$ then $H  \,prn\, G$.
\item[$(3)$] If  one of $G_i$, where $i \in \{1,2\}$, is solvable, then  $H  \,prn\, G$ if and only if $\pi_i(H)  \,prn\, G_i$ for every $i\in \{1,2\}$
    and $$N_G(H)=\langle N_{G_i}(\pi_i(H)) \mid i \in \{1,2\}\rangle=N_{G_1}(\pi_1(H))\times N_{G_2}(\pi_2(H)).$$
\item[$(4)$]  If  one of  $G_i$, where $i \in \{1,2\}$, is solvable, then  $H  \,prn\, G$ if and only if $\pi_i(H)  \,prn\, G_i$  and $N_{G_i}(\pi_i(H))\le C_i$ for each $i \in \{1,2\}$.
\end{itemize}
\end{lemma}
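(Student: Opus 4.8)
The plan is to prove the four assertions in the order $(1)$, $(4)$, $(2)$, $(3)$: statement $(1)$ is a self-contained commutator computation, $(4)$ is a formal reformulation of the normalizer condition in $(3)$ obtained from $(1)$, and $(2)$ supplies the substantive implication used in $(3)$. Identifying $G_1$ with $G_1\times 1$ and $G_2$ with $1\times G_2$, for $(1)$ I would fix $i=1$ and observe that an element $x\in G_1$ lies in $N_G(H)$ exactly when $[h,x]\in H$ for every $h\in H$. Writing $h=(h_1,h_2)$ with $h_1=\pi_1(h)$, one computes $[h,x]=([h_1,x],1)\in G_1$, so the requirement $[h,x]\in H$ is equivalent to $[h_1,x]\in H\cap G_1$. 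Letting $h$ run through $H$ makes $h_1$ run through $\pi_1(H)$, and since $[x,p]=[p,x]^{-1}$ the generated subgroups agree; thus the condition becomes $[x,\pi_1(H)]\le G_1\cap H$, which is precisely the definition of $C_1$. Hence $C_1=N_G(H)\cap G_1$, and symmetrically for $i=2$.

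For $(4)$ I would use $(1)$ to compare the two normalizer conditions. One always has $N_G(H)\le N_{G_1}(\pi_1(H))\times N_{G_2}(\pi_2(H))$, because $H^{(n_1,n_2)}=H$ forces $\pi_i(H)^{n_i}=\pi_i(H)$. Consequently the equality $N_G(H)=N_{G_1}(\pi_1(H))\times N_{G_2}(\pi_2(H))$ holds if and only if both $N_{G_1}(\pi_1(H))\times 1$ and $1\times N_{G_2}(\pi_2(H))$ lie in $N_G(H)$, that is, if and only if $N_{G_i}(\pi_i(H))\le N_G(H)\cap G_i=C_i$ for each $i$. Since $\langle N_{G_1}(\pi_1(H)),N_{G_2}(\pi_2(H))\rangle=N_{G_1}(\pi_1(H))\times N_{G_2}(\pi_2(H))$ automatically, the hypotheses of $(3)$ and $(4)$ coincide, so $(4)$ will follow from $(3)$.

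The heart of the matter is $(2)$. Fix $g=(g_1,g_2)$ and set $D=\langle H,H^g\rangle$ and $D_i=\pi_i(D)=\langle\pi_i(H),\pi_i(H)^{g_i}\rangle$. Since $\pi_i(H)\,prn\,G_i$, for each $i$ there is $d_i\in D_i$ with $\pi_i(H)^{d_i}=\pi_i(H)^{g_i}$, so $n_i:=g_id_i^{-1}\in N_{G_i}(\pi_i(H))$. The factorization hypothesis gives $n:=(n_1,n_2)\in N_G(H)$, and from $g=nd$ with $d=(d_1,d_2)$ we get $H^g=H^{nd}=H^d$; thus $d$ conjugates $H$ to $H^g$. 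The difficulty, which I expect to be the main obstacle, is that $d$ need not lie in $D$: by Goursat's lemma $D$ is only a subdirect subgroup of $D_1\times D_2$, determined by $K_i=D\cap G_i\trianglelefteq D_i$ and an isomorphism $\phi:D_1/K_1\to D_2/K_2$, and there is no reason for $(d_1,d_2)$ to respect $\phi$. The set of all conjugators of $H$ onto $H^g$ is the coset $N_G(H)g=N_G(H)d$, which by the factorization equals the box $N_{G_1}(\pi_1(H))d_1\times N_{G_2}(\pi_2(H))d_2$; what must be shown is that this box meets $D$. Since $d_i\in D_i$ one has $N_{G_i}(\pi_i(H))d_i\cap D_i=(N_{G_i}(\pi_i(H))\cap D_i)d_i$, so I would reduce the problem to finding $a_i\in(N_{G_i}(\pi_i(H))\cap D_i)d_i$ with $\phi(a_1K_1)=a_2K_2$, and solve this congruence using the freedom to modify each $d_i$ within the Goursat kernel $K_i$ together with the fact, forced by the factorization, that $N_{G_i}(\pi_i(H))$ acts trivially on $\pi_i(H)/(H\cap G_i)$. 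Verifying that this provides exactly the room needed to align the two coordinates modulo $K_1$ and $K_2$ is the technical core; once the box meets $D$ we obtain a conjugator in $\langle H,H^g\rangle$, proving $H\,prn\,G$.

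Finally, for $(3)$ the implication from the two conditions to $H\,prn\,G$ is precisely $(2)$. For the converse, assume $H\,prn\,G$ with, say, $G_2$ solvable. That $\pi_i(H)\,prn\,G_i$ for both $i$ is automatic and requires no solvability: $\pi_i$ induces an isomorphism $G/G_{3-i}\cong G_i$ carrying $HG_{3-i}/G_{3-i}$ to $\pi_i(H)$, so Lemma~\ref{Quot}$(1)$ applied to the normal subgroup $G_{3-i}$ yields $\pi_i(H)\,prn\,G_i$. The substantive, solvability-dependent part is to derive the factorization $N_G(H)=N_{G_1}(\pi_1(H))\times N_{G_2}(\pi_2(H))$, equivalently $N_{G_i}(\pi_i(H))\le C_i$ by the reformulation of part $(1)$. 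Here I would take $y\in N_{G_2}(\pi_2(H))$, apply pronormality to $g=(1,y)$ to obtain a conjugator $c=(c_1,c_2)\in D\le\pi_1(H)\times\pi_2(H)$, deduce $(1,y)c^{-1}\in N_G(H)$, and then, after multiplying by a suitable element of $H\le N_G(H)$, reduce modulo $\pi_2(H)$ to a statement inside the solvable factor; there I would induct along a chief series of $G_2$ and apply the criterion for abelian normal sections (Lemma~\ref{CritComplAbel}) together with Hall's criterion (Proposition~\ref{HallProp}) to conclude $y\in C_2$. That solvability is genuinely needed in this direction is confirmed by the examples cited in the Introduction, where for a nonabelian simple $G$ all odd-index subgroups are pronormal yet $G\times G$ contains a non-pronormal odd-index subgroup; so this converse, rather than any routine step, is the real obstacle of part $(3)$.
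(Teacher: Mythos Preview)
The paper does not prove this lemma at all: it is quoted verbatim from \cite[Propositions~2.1, 4.3, 4.4 and Corollary~4.7]{BMPPR}, with no argument supplied. There is therefore nothing in the paper to compare your attempt against.

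As to your proposal itself: part~$(1)$ is correct and complete, and your reduction of~$(4)$ to~$(3)$ via~$(1)$ is clean. The substantive gaps are in~$(2)$ and in the converse of~$(3)$. In~$(2)$ you correctly isolate the obstacle---that the componentwise conjugator $(d_1,d_2)$ need not lie in the subdirect subgroup $D=\langle H,H^g\rangle$---but the resolution you sketch (``solve this congruence using the freedom to modify each $d_i$ within the Goursat kernel together with the fact that $N_{G_i}(\pi_i(H))$ centralizes $\pi_i(H)/(H\cap G_i)$'') is not carried out, and it is not clear that the ingredients you list suffice: centralizing $\pi_i(H)/(H\cap G_i)$ controls the image in $D_i/(H\cap G_i)$, whereas the Goursat congruence is modulo $K_i=D\cap G_i$, which in general strictly contains $H\cap G_i$. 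You would still need to explain why the coset $(N_{G_i}(\pi_i(H))\cap D_i)d_iK_i/K_i$ on each side can be made to match under~$\phi$. Similarly, in the converse of~$(3)$ your outline (``reduce modulo $\pi_2(H)$ \dots\ induct along a chief series of $G_2$ and apply Lemma~\ref{CritComplAbel} together with Proposition~\ref{HallProp}'') names plausible tools but does not indicate how the inductive step actually goes; this is precisely the place where solvability is used in \cite{BMPPR}, and it requires a genuine argument, not just an invocation of the abelian criterion.
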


\medskip

 Two further assertions give criteria of pronormality of overgroups of Sylow subgroups in extensions of finite groups with special properties.

\begin{lemma}\label{XtimesY} Assume that $p$ is a prime and $X$ and $Y$ are groups such that $X\in \mathbb{X}_p$.
Let $H \le_p X \times Y$ and let
$$\xi:G\rightarrow X\quad\text{ and }\quad \eta:G\rightarrow Y$$
be the projections.
Then the following statements hold{\rm:}

$(i)$ $H$ is pronormal in $X \times Y$ if and only if $\eta(H)$ is pronormal in $Y$.

$(ii)$ $X\times Y\in \mathbb{Y}_p$ if and only if $Y\in\mathbb{Y}_p$.
\end{lemma}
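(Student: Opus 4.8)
The plan is to prove both statements of Lemma~\ref{XtimesY} by leveraging Lemma~\ref{XtimesYprojections}, which already tells us that under the hypotheses $X\in\mathbb{X}_p$ and $H\le_p X\times Y$, the subgroup $H$ decomposes as the direct product $H=\xi(H)\times\eta(H)$ with $\xi(H)=H\cap X$ and $\eta(H)=H\cap Y$. The key structural observation is that $\xi(H)\in\mathbb{X}_p$: indeed $\xi(H)\le_p X$ (since $H\le_p X\times Y$), so a Sylow $p$-subgroup $P$ of $\xi(H)$ is also a Sylow $p$-subgroup of $X$, and $N_X(P)=P$ gives $N_{\xi(H)}(P)=P$. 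Thus in the direct product $X\times Y$ the subgroup $H=\xi(H)\times\eta(H)$ has one factor, namely $\xi(H)$, lying in $\mathbb{X}_p$, and therefore $\xi(H)$ is automatically pronormal in $X$ (by Lemma~\ref{Overgroup}, or directly, since self-normalizing Sylow subgroups make every overgroup of a Sylow subgroup pronormal).

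For part $(i)$, first I would establish the forward direction: if $H\,prn\,X\times Y$, then by Lemma~\ref{Overgroup}(1) applied with the normal subgroup $Y\trianglelefteq X\times Y$, or better by Lemma~\ref{Quot}(1) with $A=X$, the image $HX/X\,prn\,(X\times Y)/X\cong Y$; but $HX/X\cong\eta(H)$ under the natural identification, so $\eta(H)\,prn\,Y$. For the converse, suppose $\eta(H)\,prn\,Y$. We already know $\xi(H)\,prn\,X$. Now I would invoke Lemma~\ref{ProInDirProd}(2): it suffices to check that $N_{X\times Y}(H)=N_X(\xi(H))\times N_Y(\eta(H))$. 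Since $H=\xi(H)\times\eta(H)$ is a ``box'' subgroup, this normalizer identity is routine — an element $(x,y)$ normalizes $\xi(H)\times\eta(H)$ if and only if $x$ normalizes $\xi(H)$ and $y$ normalizes $\eta(H)$ — so the hypotheses of Lemma~\ref{ProInDirProd}(2) are met and $H\,prn\,X\times Y$ follows. (Alternatively one can cite Lemma~\ref{SelfNormSylInQuot} with $N=X$, noting $(X\times Y)/X\cong Y$ need not be in $\mathbb{X}_p$, so the cleaner route is through Lemma~\ref{ProInDirProd}.)

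For part $(ii)$, recall that $\mathbb{Y}_p$ is the class of groups in which every overgroup of a Sylow $p$-subgroup is pronormal. Suppose first $X\times Y\in\mathbb{Y}_p$ and let $K\le_p Y$; I want $K\,prn\,Y$. Pick $P\in Syl_p(X)$ and set $H=P\times K\le_p X\times Y$; since $X\times Y\in\mathbb{Y}_p$ we get $H\,prn\,X\times Y$, and then part $(i)$ gives $\eta(H)=K\,prn\,Y$, so $Y\in\mathbb{Y}_p$. Conversely, suppose $Y\in\mathbb{Y}_p$ and let $H\le_p X\times Y$ be arbitrary; then $\eta(H)\le_p Y$, so $\eta(H)\,prn\,Y$, and part $(i)$ yields $H\,prn\,X\times Y$. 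Hence $X\times Y\in\mathbb{Y}_p$.

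I do not anticipate a serious obstacle here: the whole lemma is essentially a packaging of Lemma~\ref{XtimesYprojections} plus the observation $\xi(H)\in\mathbb{X}_p\subseteq\mathbb{Y}_p$, together with the ``box subgroup normalizer'' computation and the criterion in Lemma~\ref{ProInDirProd}(2). The only mildly delicate point is making sure the identifications $HX/X\cong\eta(H)$ and the normalizer decomposition are stated carefully using the projections $\xi,\eta$; everything else is direct application of the preceding lemmas. The typo in the statement ($G$ should be $X\times Y$ in the definition of $\xi,\eta$) does not affect the argument.
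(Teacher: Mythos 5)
Your proposal is correct and follows essentially the same route as the paper: both use Lemma~\ref{XtimesYprojections} to write $H=\xi(H)\times\eta(H)$, note $\xi(H)\,prn\,X$ since $X\in\mathbb{X}_p\subset\mathbb{Y}_p$, verify $N_{X\times Y}(H)=N_X(\xi(H))\times N_Y(\eta(H))$, and conclude via Lemma~\ref{ProInDirProd}(2), with the forward direction from Lemma~\ref{Quot} and part~$(ii)$ deduced from part~$(i)$. Your explicit spelling out of $(ii)$ (taking $H=P\times K$ with $P\in Syl_p(X)$) merely fills in what the paper leaves as ``follows directly.''
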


\begin{proof} Prove Statement $(i)$. By Lemma~\ref{Quot}, if $H$ is pronormal in $X \times Y$, then $\xi(H)$ is pronormal in $\xi(X\times Y)=X$ and $\eta(H)$ is pronormal in $\eta(X \times Y)=Y$.

Show the converse. Suppose that  $\eta(H)$ is pronormal in $Y$ and note that $\xi(H)$ is pronormal in $X$ because $X \in \mathbb{X}_p\subset \mathbb{Y}_p$ and $\xi(H)\le_p \xi(X\times Y)=X$. Since $H \le_p X\times Y$ and $X \in \mathbb{X}_p$, Lemma~\ref{XtimesYprojections} implies that $H=\xi(H)\times \eta(H)$. Therefore, $$N_{X\times Y}(H)=N_X(\xi(H))\times N_Y(\eta(H)).$$ Now $H$ is pronormal in $X\times Y$ by Lemma~\ref{ProInDirProd} part~$(2)$.

\medskip

Statement~$(ii)$ follows directly from Statement~$(i)$.





\end{proof}

\begin{proposition}\label{GeneralCriterium} Let $G$ be a group, $A\unlhd G$, where $G/A \in \mathbb{X}_p$ and $A \in \mathbb{Y}_p$.
Let $T$ be a Sylow $p$-subgroup of $A$. Then

$(i)$ if $H \ge S$ for some $S\in Syl_p(G)$, $T = A\cap S$, $Y=N_{A}(H\cap A)$ and $Z=N_{H\cap A}(T)$,
then $Z$ is normal in both $N_H(T)$ and $N_Y(T)$ and the following conditions are equivalent{\rm:}

$\mbox{ }$ $\mbox{ }$ $\mbox{ }$  $(1)$ $H$ is pronormal in $G${\rm;}

$\mbox{ }$ $\mbox{ }$ $\mbox{ }$ $(2)$ $N_H(T)/Z$  is pronormal in $(N_H(T) N_Y(T) )/Z${\rm;}

$\mbox{ }$ $\mbox{ }$ $\mbox{ }$ $(3)$ $N_H(T)$ is pronormal in $N_H(T) N_Y(T)$.

$(ii)$ $G \in \mathbb{Y}_p$ if and only if $N_G(T)/T \in \mathbb{Y}_p$.

\end{proposition}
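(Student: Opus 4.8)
The plan is to reduce part~(i) to the case $G=HA$, dispose of $(2)\Leftrightarrow(3)$ formally, settle the substantive equivalence $(1)\Leftrightarrow(3)$ by transporting pronormality witnesses between $G$ and the section $N_H(T)N_Y(T)$, and then derive~(ii) from~(i). For the reduction: $H$ contains a Sylow $p$-subgroup, so $H\le_p G$, and since $G/A\in\mathbb X_p$ Lemma~\ref{SelfNormSylInQuot} gives $H\,prn\,G\iff H\,prn\,HA$; as the data $A,T,S,H,Y,Z$ is unchanged on passing to $HA$, I may assume $G=HA$. Then $T=A\cap S\unlhd S$ (because $A\unlhd G$), so $S\le N_G(T)$; $H\cap A\le_p A$ with $A\in\mathbb Y_p$ gives $H\cap A\,prn\,A$ and $T\in Syl_p(A)\cap Syl_p(H\cap A)$; and $A\unlhd G$ forces $H\cap A\unlhd H$, so the Frattini argument yields $H=(H\cap A)N_H(T)$ and $G=A\,N_G(T)$, whence (comparing $p$-parts, with $|N_A(T)|_p=|T|=|A|_p$) $S\in Syl_p(N_G(T))$ and $S\le N_H(T)$. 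Both $N_H(T)$ and $N_Y(T)$ normalise $T$ and normalise $H\cap A$ — the former because its elements lie in $H$ and in $G$, the latter because $N_Y(T)\le Y=N_A(H\cap A)$ — hence both normalise $Z=N_{H\cap A}(T)$; moreover $N_H(T)$ normalises $N_Y(T)=Y\cap N_A(T)$, so $N_H(T)N_Y(T)$ is a subgroup and $Z$ is normal in it. Now Lemma~\ref{Quot}(3) gives $(2)\Leftrightarrow(3)$.

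For $(1)\Leftrightarrow(3)$, using $G=AH$ and conjugating by an appropriate element of $H$ one first sees that $H\,prn\,G$ is equivalent to ``$H$ and $H^{a}$ are conjugate in $\langle H,H^{a}\rangle$ for every $a\in A$''. Given $a\in A$, one uses $H\cap A\,prn\,A$ to conjugate $(H\cap A)^{a}$ back onto $H\cap A$ by some $c\in\langle H\cap A,(H\cap A)^{a}\rangle\le\langle H,H^{a}\rangle$, then conjugates inside $H\cap A$ by some $d$ to restore $T$; the element $acd$ lies in $Y\cap N_A(T)=N_Y(T)$ and differs from $a$ by $cd\in\langle H,H^{a}\rangle$, so testing all $a\in A$ reduces to testing all $g\in N_Y(T)$. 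For such a $g$ one has $T^{g}=T$ and $(H\cap A)^{g}=H\cap A$, and the factorisation $H=(H\cap A)N_H(T)$ with $H\cap A\unlhd H$ lets one relate ``$H$ and $H^{g}$ are conjugate in $\langle H,H^{g}\rangle$'' to ``$N_H(T)$ and $N_H(T)^{g}$ are conjugate in $\langle N_H(T),N_H(T)^{g}\rangle$'': in one direction a conjugator of $N_H(T)^{g}$ onto $N_H(T)$ automatically carries $H^{g}$ onto $H$, since $\langle N_H(T),N_H(T)^{g}\rangle$ normalises $H\cap A$; combined with Lemma~\ref{NormSyl} this yields $(1)\Leftrightarrow(3)$. \textbf{The main obstacle is the reverse passage}: from a conjugator $w\in\langle H,H^{g}\rangle$ with $H^{gw}=H$ one can multiply by a Sylow conjugation $d\in H\cap A$ to obtain $gwd\in N_G(H)\cap N_G(T)$, hence an element normalising $N_H(T)$, but one still has to show that $wd$ (for a suitable choice of $w$) can be taken inside the smaller join $\langle N_H(T),N_H(T)^{g}\rangle$ rather than merely inside $\langle H,H^{g}\rangle\cap N_G(T)$; this is the only genuinely non-formal point, and it is where $Z\unlhd N_H(T)$, the Frattini factorisation of $H$, and $A\in\mathbb Y_p$ must all be used.

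For part~(ii): since Sylow $p$-subgroups are conjugate, $G\in\mathbb Y_p$ iff every $M$ with $S\le M\le G$ is pronormal in $G$, and $N_G(T)/T\in\mathbb Y_p$ iff every $M$ with $S\le M\le N_G(T)$ has $M/T\,prn\,N_G(T)/T$, equivalently (Lemma~\ref{Quot}(3)) $M\,prn\,N_G(T)$; thus $N_G(T)/T\in\mathbb Y_p$ iff $N_G(T)\in\mathbb Y_p$. Because $A\in\mathbb Y_p$, every overgroup of $T$ in $N_A(T)$ is an overgroup of $T$ in $A$, hence pronormal in $A$, hence pronormal in $N_A(T)$ by Lemma~\ref{Overgroup}(1); so $N_A(T)\in\mathbb Y_p$, while $N_G(T)/N_A(T)\cong G/A\in\mathbb X_p$. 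Hence part~(i) applies verbatim to the triple $(N_G(T),N_A(T),T)$; for $M$ with $S\le M\le N_G(T)$ one has $N_M(T)=M$ there, and the auxiliary condition that~(i) attaches to $(N_G(T),M)$ coincides with the one it attaches to $(G,M)$, so $M\,prn\,G\iff M\,prn\,N_G(T)$. Consequently, if $G\in\mathbb Y_p$ then every such $M$ is pronormal in $N_G(T)$, so $N_G(T)\in\mathbb Y_p$, i.e.\ $N_G(T)/T\in\mathbb Y_p$; conversely, if $N_G(T)\in\mathbb Y_p$ and $S\le H\le G$, then $N_H(T)=H\cap N_G(T)$ is an overgroup of $S$ in $N_G(T)$, hence pronormal in $N_G(T)$, hence pronormal in $N_H(T)N_Y(T)$ by Lemma~\ref{Overgroup}(1), and part~(i) gives $H\,prn\,G$. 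Thus $G\in\mathbb Y_p\iff N_G(T)/T\in\mathbb Y_p$.
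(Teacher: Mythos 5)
Your reduction to $G=HA$ via Lemma~\ref{SelfNormSylInQuot}, the verification that $Z$ is normal in $N_H(T)N_Y(T)$, the equivalence $(2)\Leftrightarrow(3)$ via Lemma~\ref{Quot}, and the derivation of part $(ii)$ from part $(i)$ are all sound (the paper simply cites \cite[Theorem~1]{Guo_Mas_Rev} for $(ii)$, so your deduction of $(ii)$ is a reasonable alternative --- but it stands or falls with $(i)$). The genuine gap is that the central equivalence $(1)\Leftrightarrow(3)$ is not proved: you yourself flag the implication $(1)\Rightarrow(3)$ --- moving a conjugator $w\in\langle H,H^{g}\rangle$ into the smaller join $\langle N_H(T),N_H(T)^{g}\rangle$ --- as ``the main obstacle'' and leave it open, so the proposal is a plan rather than a proof precisely at the point where the proposition has content.

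The paper closes this without ever transporting conjugating elements. Set $X=N_{HA}(H\cap A)$; since $H\le X$ one has $X=HY$, and the Frattini argument gives $H=(H\cap A)N_H(T)$ and $Y=(H\cap A)N_Y(T)$, while $N_H(T)N_Y(T)\cap(H\cap A)=Z$. Hence $H\cap A\unlhd X$, the quotient $X/(H\cap A)$ is identified with $N_H(T)N_Y(T)/Z$, and under this identification $H/(H\cap A)$ corresponds to $N_H(T)/Z$; Lemma~\ref{Quot} then shows that $H\,prn\,X$ is equivalent to each of $(2)$ and $(3)$. In particular your ``obstacle'' direction is the easy one: if $H\,prn\,G$, then $H\,prn\,X$ by Lemma~\ref{Overgroup}$(1)$ (as $H\le X\le G$), and passing to the quotient gives $(3)$. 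The genuinely nontrivial bridge is the converse $H\,prn\,N_{HA}(H\cap A)\Rightarrow H\,prn\,HA$, which is where $A\in\mathbb{Y}_p$ enters; the paper delegates it to \cite[Lemma~15(2)]{Guo_Mas_Rev}, and your own sketch of $(3)\Rightarrow(1)$ --- reducing the test elements to $N_Y(T)$ via pronormality of $H\cap A$ in $A$ and then pushing a conjugator of $N_H(T)^{g}$ up to one of $H^{g}$ --- is essentially a re-proof of that cited lemma and does look workable. So to repair the proposal: keep your $(3)\Rightarrow(1)$ argument (or cite \cite[Lemma~15(2)]{Guo_Mas_Rev} together with Lemma~\ref{SelfNormSylInQuot}), and replace the attempted witness transport for $(1)\Rightarrow(3)$ by the factorization $X=HY$ and the quotient-by-$(H\cap A)$ argument above.
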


\begin{proof} Statement $(ii)$ is  {\cite[Theorem~1]{Guo_Mas_Rev}}.

\medskip

To prove Statement $(i)$ we need to go throw the proof of {\cite[Lemma~15]{Guo_Mas_Rev}} and generalize the reasonings.
Define $$X=N_{HA}(H\cap A).$$

Note that $H\le X\text{, therefore, }$ $$ X=N_{HA}(H\cap A)=HN_A(H\cap A)=HY.$$
By the Frattini Argument,
$$H=N_H(T)(H\cap A) \mbox{ and }  Y=N_Y(T)(H\cap A).$$
Moreover, $N_H(T)$ normalizes $N_Y(T)$ and
$$N_H(T) \cap (H \cap A)=N_Y(T) \cap (H\cap A)=N_{H\cap A}(T)=N_G(T)\cap (H\cap A) =(N_H(T) N_Y(T))\cap (H\cap A).$$
Note that by Lemma~\ref{Quot}, $$H = N_H(T)(H\cap A)  \,prn\, X = N_{HA}(H\cap A)=(H\cap A)N_Y(T)N_H(T)$$ if and only if
$$N_H(T)(H\cap A)/(H\cap A) \,prn\, (N_Y(T)N_H(T)(H\cap A))/ (H\cap A)$$
if and only if $$N_H(T)/N_{H\cap A}(T) \,prn\, N_H(T) N_Y(T) / N_{H\cap A}(T)$$ if and only if $$N_H(T) \,prn\, N_H(T) N_Y(T).$$

\medskip

Thus, we have proved that the following conditions are equivalent{\rm:}

$(1)$ $H$ is pronormal in $N_{HA}(H\cap A)${\rm;}

$(2)$ $N_H(T)/Z$  is pronormal in $(N_H(T) N_Y(T) )/Z${\rm;}

$(3)$ $N_H(T)$ is pronormal in $N_H(T) N_Y(T)$.

\medskip

Now the condition $A \in \mathbb{Y}_p$ and {\cite[Lemma~15, part~$(2)$]{Guo_Mas_Rev}} imply that $H$ is pronormal in $N_{HA}(H\cap A)$
if and only if $H$ is pronormal in $HA$.

\smallskip

The condition $G/A \in \mathbb{X}_p$ and Lemma~\ref{SelfNormSylInQuot} imply that $H$ is pronormal in $G$ if and only if $H$ is
pronormal in $HA$.

\end{proof}


\medskip

In the further series of lemmas we provide some properties of subgroups of odd index in finite groups of special type.


\begin{lemma}[{\rm See {\cite{Kondrat'ev}}}]\label{N_G(S)=S} Let $G=PSp_n(q)$, where $q$ is odd, and $S \in Syl_2(G)$.

$(1)$ If $q \equiv \pm 1 \pmod 8$, then $N_G(S)=S$.

$(2)$ If $q \equiv \pm 3 \pmod 8$ and $n=2^{s_1}+\dots+2^{s_t}$ for $s_1>\dots>s_t\geq 0$, then $N_G(S)/S$ is elementary abelian of
order $3^t$.

\end{lemma}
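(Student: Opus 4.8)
The plan is to work inside $Sp_{2m}(q)$ rather than $PSp_{2m}(q)$. Since $q$ is odd, $|Z(Sp_{2m}(q))|=2$, and $Z:=Z(Sp_{2m}(q))\le S$ for $S\in Syl_2(Sp_{2m}(q))$, so $N_{PSp_{2m}(q)}(S/Z)/(S/Z)\cong N_{Sp_{2m}(q)}(S)/S$ and it suffices to compute the latter. Let $V=\mathbb{F}_q^{2m}$ be the natural symplectic space. First I would recall the classical (Carter--Fong) description of $S$: writing $m=2^{r_1}+\dots+2^{r_t}$ with $r_1>\dots>r_t\ge 0$ and decomposing $V=V_1\perp\dots\perp V_t$ with $\dim V_j=2^{r_j+1}$, a count of $2$-parts (using $|Sp_{2m}(q)|_2=(q^2-1)_2^m\cdot m!_2$ and $m!_2=2^{\,m-t}$, where $t$ is the number of nonzero binary digits of $m$, hence of $n$) shows that $S$ may be taken as $S_1\times\dots\times S_t$ with $S_j\in Syl_2(Sp(V_j))$, and that each $S_j$ embeds in $SL_2(q)\wr Sym_{2^{r_j}}$ as the iterated wreath product $Q\wr(C_2\wr\dots\wr C_2)$ with $Q\in Syl_2(SL_2(q))$ generalized quaternion. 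A generalized quaternion subgroup of $SL_2(q)$ is nonabelian, hence not conjugate into a Borel subgroup (whose $2$-subgroups are abelian in odd characteristic), so it acts irreducibly on $\mathbb{F}_q^2$; irreducibility is inherited by the transitively permuted wreath construction, so each $S_j$ acts irreducibly on $V_j$.

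Since the $V_j$ have pairwise distinct dimensions, they are pairwise non-isomorphic irreducible $S$-modules, so $V=V_1\perp\dots\perp V_t$ is the unique decomposition of $V$ into irreducible $S$-submodules; hence every element of $Sp(V)$ normalizing $S$ stabilizes each $V_j$, and together with $S=S_1\times\dots\times S_t$ this yields $N_{Sp(V)}(S)=\prod_{j=1}^t N_{Sp(V_j)}(S_j)$ and $N_{Sp(V)}(S)/S\cong\prod_{j=1}^t N_{Sp(V_j)}(S_j)/S_j$. This reduces the lemma to the case $n=2^{r+1}$: for $G_0=Sp_{2^{r+1}}(q)$ and $S_0\in Syl_2(G_0)$ it remains to show $N_{G_0}(S_0)=S_0$ when $q\equiv\pm 1\pmod 8$, and $N_{G_0}(S_0)/S_0\cong C_3$ when $q\equiv\pm 3\pmod 8$.

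I would prove this by induction on $r$. For $r=0$, $G_0=SL_2(q)$ and $S_0$ is generalized quaternion of order $(q^2-1)_2$; passing again to $PSL_2(q)$, whose Sylow $2$-subgroup $\bar S_0$ is dihedral of order $(q^2-1)_2/2$, the classical subgroup structure of $PSL_2(q)$ gives $N_{PSL_2(q)}(\bar S_0)=\bar S_0$ when $|\bar S_0|\ge 8$ (equivalently $q\equiv\pm1\pmod 8$) and $N_{PSL_2(q)}(\bar S_0)\cong A_4$ when $\bar S_0$ is a four-group (equivalently $q\equiv\pm3\pmod 8$), i.e. $N_{G_0}(S_0)/S_0\cong 1$ or $C_3$. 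For the step $r\to r+1$ I would use the orthogonal decomposition $V=W_1\perp W_2$ with $\dim W_i=2^r$, giving (by another $2$-part count) an embedding $Sp_{2^r}(q)\wr C_2\le G_0$ containing a Sylow $2$-subgroup $S_0=S'\wr C_2$, $S'\in Syl_2(Sp_{2^r}(q))$; as in the first paragraph $S'$ acts irreducibly on each $W_i$, so $W_1,W_2$ are the two non-isomorphic homogeneous components of $V$ for the base group $B=S'\times S'$ of $S_0$. Granting that $N:=N_{G_0}(S_0)$ preserves the unordered pair $\{W_1,W_2\}$, the subgroup $N^{+}=N\cap(Sp(W_1)\times Sp(W_2))$ has index $2$ in $N$ and, after identifying $W_1$ with $W_2$ via the isometry defining the swap in $S_0$, equals the fibre product $\{(g_1,g_2):g_i\in N_{Sp(W)}(S'),\ g_1S'=g_2S'\}$; hence $N^{+}/B\cong N_{Sp(W)}(S')/S'$, and since $B\le N^{+}$, $S_0\unlhd N$ and $N=N^{+}S_0$ we obtain $N/S_0\cong N^{+}/(N^{+}\cap S_0)=N^{+}/B\cong N_{Sp(W)}(S')/S'$, which is $1$ or $C_3$ by induction. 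Assembling everything, $N_G(S)/S$ is trivial if $q\equiv\pm1\pmod 8$ and is elementary abelian of order $3^{\,t}$ if $q\equiv\pm3\pmod 8$.

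The step I expect to be the main obstacle is the one flagged above: that $N_{G_0}(S_0)$ preserves $\{W_1,W_2\}$, equivalently normalizes the base group $B=S'\times S'$ of $S_0=S'\wr C_2$. Unlike the distinct-dimension reduction of the second paragraph this is not formal, since the base group of a wreath product need not be characteristic (already $C_2\wr C_2\cong D_8$) and here the two halves have equal dimension. I would resolve it by giving an intrinsic, automorphism-invariant description of $B$ inside $S_0$ --- for instance as the subgroup generated by all normal subgroups of $S_0$ that are direct products of generalized quaternion groups --- which forces $N_{G_0}(S_0)\le N_{Sp(V)}(B)$ and hence preservation of the homogeneous components $\{W_1,W_2\}$; this identification leans on the explicit Carter--Fong structure of $S'$ and is the technical heart of the argument.
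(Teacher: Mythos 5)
First, a point of comparison: the paper does not prove this lemma at all --- it is quoted from Kondrat'ev \cite{Kondrat'ev}, whose computation rests on the Carter--Fong description of Sylow $2$-subgroups of the classical groups. So there is no internal proof to measure you against; what you offer is an independent reconstruction, and in outline it follows exactly the classical route: pass from $PSp_n(q)$ to $Sp_n(q)$ using $Z\le S$; decompose $V$ orthogonally according to the binary digits of $m=n/2$ and check $2$-parts; observe that the summands $V_j$ are pairwise non-isomorphic irreducible $S$-modules (distinct dimensions), so $N_{Sp(V)}(S)=\prod_j N_{Sp(V_j)}(S_j)$; and handle the $2$-power case by induction through $S'\wr C_2$, with the base case read off from the subgroup structure of $PSL_2(q)$ (dihedral Sylow self-normalizing iff $q\equiv\pm1\pmod 8$, normalizer $A_4$ of the four-group otherwise). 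Those reductions, the irreducibility of each $S_j$ on $V_j$, and the fibre-product computation giving $N^{+}/B\cong N_{Sp(W)}(S')/S'$ are all correct, and they do yield the stated conclusion, including elementary abelian of order $3^t$.

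However, the argument is not complete, and you say so yourself: the assertion that $N_{G_0}(S_0)$ preserves the pair $\{W_1,W_2\}$, equivalently normalizes the base group $B=S'\times S'$ of $S_0=S'\wr C_2$, is precisely where the content of the induction step lies, and it is left unproved. The claim itself is true, but the repair you sketch does not stand as stated: you propose to recognize $B$ inside $S_0$ as the subgroup generated by all normal subgroups of $S_0$ that are direct products of generalized quaternion groups. For $r\ge 2$ the factor $S'$ is an iterated wreath product $Q\wr C_2\wr\dots\wr C_2$, not a direct product of generalized quaternion groups; the evident normal subgroups of the proposed kind (the bottom layer $Q\times\dots\times Q$) generate a proper subgroup of $B$, so you would have to show both that further such normal subgroups fill out all of $B$ and that no such normal subgroup fails to lie in $B$ --- neither is addressed, and neither is obvious. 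So the ``technical heart'' you flag is a genuine missing step rather than a routine verification, and without it the induction does not close; to finish along your lines one needs an actual automorphism-invariant identification of $B$ (or of the block system $\{W_1,W_2\}$), or else one should simply quote the normalizer computation of Carter--Fong/Kondrat'ev, which is what the paper does. (A minor further slip: in the inductive step the labels are off by one --- with $\dim W_i=2^r$ and $S'\in Syl_2(Sp_{2^r}(q))$ you are deducing your case $r$, namely $\dim V=2^{r+1}$, from case $r-1$; this is purely notational.)
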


\begin{lemma}[{\rm See \cite{Maslova1} and \cite{Maslova10}}] \label{CMSOIPSp} Let $G=Sp_{2n}(q)$, where $n \ge 1$ and $q$ is
odd{\rm;} let $V$ be the natural module of $G$. A subgroup $H$ is a maximal subgroup of odd index in $G$ if and only if one of the
following statements holds{\rm:}

$(1)$ $H \cong Sp_{2n}(q_0)$, where $q=q_0^r$ and $r$ is an odd prime, is the centralizer in $G$ of a field automorphism of order
$r${\rm;}

$(2)$ $H \cong Sp_{2m}(q)\times Sp_{2(n-m)}(q)$ is the stabilizer of a non-degenerate subspace of dimension $2m$ of $V$, and  $n \succ
m${\rm;}

$(3)$ $H \cong Sp_{2m}(q)\wr Sym_t$ is the stabilizer of an orthogonal decomposition $V =\bigoplus V_i$ into a sum of pairwise
isometric non-degenerate subspaces $V_i$ of dimension $2m$, $m = 2^w$, $w$ is a non-negative integer, and $n=mt${\rm;}

$(4)$ $n=1$ and $H \cong  SL_2(q_0).2$ is the centralizer in $G$ of a field automorphism of order~$2${\rm;}

$(5)$ $n=1$, $H/Z(G) \cong Alt_4$,  $q$ is prime, and $q=5$ or $q \equiv \pm 3, \pm 13\pmod {40}${\rm;}

$(6)$ $n=1$, $H/Z(G) \cong Sym_4$,  $q$ is prime, and $q \equiv \pm 7 \pmod {16}${\rm;}

$(7)$ $n=1$, $H/Z(G) \cong Alt_5$,  $q$ is prime, and $q \equiv 11, 19, 21, 29 \pmod {40}${\rm;}

$(8)$ $n=1$, $H/Z(G) \cong D_{q+1}$,  and  $7 < q \equiv 3 \pmod 4${\rm;}

$(9)$ $n=2$, $H/Z(G) \cong 2^4.Alt_5$,  $q$ is prime, and $q \equiv \pm 3 \pmod 8$.

\end{lemma}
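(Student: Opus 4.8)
The plan is to derive the list from Aschbacher's theorem on maximal subgroups of the classical groups, as presented in \cite{Kleidman_Liebeck}: every maximal subgroup of $G=Sp_{2n}(q)$ lies in one of the geometric classes $\mathcal{C}_1,\dots,\mathcal{C}_7$ or in the class $\mathcal{S}$ of almost simple, irreducibly embedded subgroups. Since a subgroup has odd index precisely when it contains a Sylow $2$-subgroup, i.e.\ when $|H|_2=|G|_2$, the task is to decide, class by class, whether a representative carries the full $2$-part of $|G|$. The arithmetic backbone is the identity
$$|Sp_{2n}(q)|_2=(q^2-1)_2^{\,n}\cdot n!_2\qquad(q\text{ odd}),$$
an immediate consequence of $(q^{2i}-1)_2=(q^2-1)_2\cdot i_2$ (a form of the lifting-the-exponent lemma), together with the fact that $(q^2-1)_2\ge 8$ for all odd $q$, since $q^2\equiv1\pmod 8$. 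Combined with the Legendre and Kummer formulas for the $2$-parts of factorials, binomial and multinomial coefficients, each case then reduces to an elementary divisibility check.

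First I would clear away the classes that never produce odd index. For the maximal parabolic subgroups $P_1,\dots,P_n$ (the parabolic members of $\mathcal{C}_1$) an order count using $(q^2-1)_2\ge8$ and $(q-1)_2\le (q^2-1)_2/2$ shows the index is always even; for instance $|G:P_1|=\frac{q^{2n}-1}{q-1}$ is a sum of $2n$ odd terms. For $\mathcal{C}_3$ (the field extension subgroups $Sp_{2n/r}(q^r).r$ with $r$ prime) the identity yields index $2$-part $(q^2-1)_2^{\,n-n/r}\cdot n!_2/(n/r)!_2$ for odd $r$, using $(q^{2r}-1)_2=(q^2-1)_2$, and a similar expression for $r=2$; as $(q^2-1)_2\ge8$ and $n-n/r>0$, the index is even whenever $n\ge2$. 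The very same estimate — a positive power of $(q^2-1)_2\ge8$ surviving in the index — settles the remaining members of $\mathcal{C}_2$ (the stabiliser $GL_n(q).2$ of a decomposition into two totally isotropic $n$-spaces, for $n\ge2$), as well as $\mathcal{C}_4$ and $\mathcal{C}_7$.

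Next I would extract the three infinite families. In $\mathcal{C}_5$, with $q=q_0^{\,r}$, the index $2$-part of $Sp_{2n}(q_0)$ equals $\big((q^2-1)_2/(q_0^2-1)_2\big)^n$, which lifting the exponent makes $1$ for odd $r$, giving case~$(1)$, and $2^n$ for $r=2$; in the latter situation only the enlarged $\mathcal{C}_5$-subgroup can have odd index, and only for $n=1$, where it is $SL_2(q_0).2$, giving case~$(4)$. For the non-parabolic part of $\mathcal{C}_1$, the index $2$-part of the stabiliser of a non-degenerate $2m$-space collapses via the identity to $\binom{n}{m}_2$, which is $1$ exactly when $m\preceq n$; this is case~$(2)$. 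For the non-degenerate imprimitive members of $\mathcal{C}_2$, $Sp_{2m}(q)\wr Sym_t$ with $n=mt$, the index $2$-part is $n!_2/(m!_2^{\,t}t!_2)$, and rewriting its $2$-adic valuation through binary digit sums and using subadditivity of the digit sum shows it vanishes exactly when $m$ is a power of $2$; this is case~$(3)$.

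What remains — class $\mathcal{S}$, the $\mathcal{C}_6$-normalisers of symplectic-type $2$-groups, and the low-dimensional anomalies — is where the real work lies. For $n$ large the orders of $\mathcal{S}$- and $\mathcal{C}_6$-subgroups have $2$-part too small to equal $(q^2-1)_2^{\,n}n!_2$, which can be made rigorous using the Landazuri--Seitz/Liebeck lower bounds on representation dimensions and the explicit $\mathcal{C}_6$-data in \cite{Kleidman_Liebeck}; only finitely many base cases survive. For $n=1$ one replaces Aschbacher's theorem by Dickson's classification of the subgroups of $SL_2(q)$: the odd-index maximal ones are a torus normaliser together with the preimages of $Alt_4$, $Sym_4$ and $Alt_5$, which are maximal only for prime $q$ and of odd index only when $(q^2-1)_2$ takes the correct small value ($8$ or $16$), and translating these $2$-adic constraints into congruences modulo $16$ and $40$ produces cases~$(5)$–$(8)$. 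For $n=2$ one uses the maximal subgroup structure of $Sp_4(q)$: the subgroup of shape $2^{1+4}.Alt_5$ has $2$-part $2^7$, equal to $|Sp_4(q)|_2=(q^2-1)_2^{\,2}\cdot2$ precisely when $(q^2-1)_2=8$, i.e.\ $q$ prime and $q\equiv\pm3\pmod8$, giving case~$(9)$, while every other $\mathcal{S}$- or $\mathcal{C}_6$-subgroup of $Sp_4(q)$ has even index. The chief obstacle is not the $2$-arithmetic, which is routine, but the bookkeeping: proving that each listed subgroup is genuinely \emph{maximal} (and not properly contained in a larger subgroup of odd index), which is where the side conditions ``$m\preceq n$'', ``$m$ a power of $2$'' and ``$q$ prime'' originate and which rests on the containment results of \cite{Kleidman_Liebeck}; and handling class $\mathcal{S}$ in the low-dimensional cases, where a uniform estimate must give way to explicit subgroup data.
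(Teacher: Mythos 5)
The paper gives no proof of this lemma: it is imported verbatim from the classification of maximal subgroups of odd index in finite simple classical groups in \cite{Maslova1,Maslova10}, and your plan follows essentially the same route as those sources --- Aschbacher's theorem as organized in \cite{Kleidman_Liebeck}, comparison of $2$-parts via $|Sp_{2n}(q)|_2=(q^2-1)_2^{\,n}\,n!_2$ with Legendre/Kummer, and Dickson's list together with the explicit $\mathcal{C}_6$/$\mathcal{S}$ data in low rank. Your arithmetic criteria (the $\binom{n}{m}_2$ condition in case $(2)$, the digit-sum argument in case $(3)$, the congruence translations in cases $(4)$--$(9)$) are correct, so the proposal is sound as a sketch, the remaining work being exactly the maximality and class-$\mathcal{S}$ bookkeeping you acknowledge is deferred to the cited references.
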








\begin{lemma}\label{NormInWrProdPSpSn} Let $L=Sp_{2^w}(q)$ for odd $q$ and $w\ge 1$, $P \in Syl_2(L)$, and $G={L\wr Sym_n}$, where
$Sym_{n}$ acts naturally on $\{1,\dots, n\}$. Let $K=\prod\limits_{i=1}^n K_i$, where each $K_i$ is isomorphic to $L$, be a normal
subgroup of $G$, which coincides with the base of this wreath product. Let $S\in Syl_2(G)$, $T=K\cap S$, and $T_i =K_i\cap T$. Then
the following statements hold{\rm:}

$(1)$ $T_i \cong P$ for each $i${\rm;}

$(2)$ $T = \prod\limits_{i=1}^n T_i \cong \underbrace{P\times\dots\times P}\limits_{n\text{ \rm times}}${\rm;}

$(3)$ $N_K(T) = \prod\limits_{i=1}^n N_{K_i}(T_i) \cong \underbrace{N_L(P)\times\dots\times N_L(P)}\limits_{n\text{ \rm
times}}${\rm;}

$(4)$ $N_G(T)\cong N_L(P)\wr Sym_n$, where $Sym_{n}$ acts naturally on $\{1,\dots, n\}${\rm;}

$(5)$ $N_G(T)/T\cong (N_L(P)/P)\wr Sym_n \cong Z\wr Sym_n$, where $Sym_{n}$ acts naturally on $\{1,\dots, n\}$, $Z \cong \mathbb{Z}_3$
if ${q\equiv \pm 3\pmod{8}}$, and $Z$ is trivial if ${q\equiv \pm 1\pmod{8}}$.

\end{lemma}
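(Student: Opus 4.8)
The plan is to work through the five statements in order, since each one feeds the next. Statement~(2) is immediate from Statement~(1) once we know that $T$, being a Sylow $2$-subgroup of the base group $K=\prod_i K_i$, must decompose as $\prod_i T_i$ with $T_i\in Syl_2(K_i)$ (a Sylow subgroup of a direct product is the direct product of Sylow subgroups of the factors). So the real content of (1)--(2) is just the observation that each $K_i\cong L=Sp_{2^w}(q)$, hence $T_i\cong P\in Syl_2(L)$; here I would also fix a concrete $S\in Syl_2(G)$ of the form $(P\times\dots\times P)\rtimes R$ with $R\in Syl_2(Sym_n)$ permuting the coordinates, so that $T=K\cap S=\prod_i T_i$ with each $T_i=P$ sitting in the $i$-th coordinate.

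For Statement~(3): since $K=\prod_i K_i$ is a direct product and $T=\prod_i T_i$ with $T_i\le K_i$, the normalizer of $T$ in $K$ is computed coordinatewise, $N_K(T)=\prod_i N_{K_i}(T_i)$, and each factor is isomorphic to $N_L(P)$ via the isomorphism $K_i\cong L$ carrying $T_i$ to $P$. Statement~(4) is the heart of the lemma. The inclusion $N_G(T)\ge N_L(P)\wr Sym_n$ is clear once we identify $N_L(P)\wr Sym_n$ inside $G=L\wr Sym_n$ as $\bigl(\prod_i N_{K_i}(T_i)\bigr)\rtimes Sym_n$: the base part normalizes $T$ by (3), and the top group $Sym_n$ permutes the $T_i$ among themselves, hence normalizes $T=\prod_i T_i$. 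For the reverse inclusion, take $g\in N_G(T)$ and write $g=kb$ with $k\in K$ and $b\in Sym_n$ (the top complement). Conjugation by $b$ permutes the coordinates, so $T^b=\prod_i T_{i^{b^{-1}}}$, and since each $T_i=P$ under the fixed identifications and all coordinates are ``the same'', $T^b=T$; hence $b\in N_G(T)$ and therefore $k=gb^{-1}\in N_G(T)\cap K=N_K(T)=\prod_i N_{K_i}(T_i)$ by (3). Thus $g=kb\in\bigl(\prod_i N_{K_i}(T_i)\bigr)\rtimes Sym_n\cong N_L(P)\wr Sym_n$, as required. The one point demanding care is that $T^b=T$ for every $b\in Sym_n$: this holds because the identification $K=L^n$ was chosen so that the copy of $P$ in each coordinate is literally the same subgroup of $L$, so a coordinate permutation sends this configuration to itself.

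Finally, Statement~(5) follows by quotienting (4) by $T$. Since $T=\prod_i T_i$ is the base of the wreath product $N_L(P)\wr Sym_n$ sitting inside $N_G(T)$, and $Sym_n$ normalizes $T$ (acting trivially on the ``diagonal'' structure but permuting coordinates), we get
$$N_G(T)/T\;\cong\;\bigl(N_L(P)\wr Sym_n\bigr)/(P\times\dots\times P)\;\cong\;(N_L(P)/P)\wr Sym_n.$$
Here one uses that reduction modulo the base of a wreath product $B\wr Sym_n$ by $B_1\times\dots\times B_n$, where each $B_i\le B$ is normal, yields $(B/B_1)\wr Sym_n$ since $Sym_n$ permutes the factors consistently. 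Now invoke Lemma~\ref{N_G(S)=S} applied to $PSp_2(q)=PSp_{2^w}(q)$ with $2^w$ expanded in binary as a single power of $2$ (so $t=1$ in that lemma's notation): $N_L(P)/P$ is trivial when $q\equiv\pm1\pmod8$ and is cyclic of order $3$ when $q\equiv\pm3\pmod8$. Strictly, Lemma~\ref{N_G(S)=S} is stated for $PSp_n(q)$ rather than $Sp_n(q)$, so I would first note that for odd $q$ the center $Z(Sp_{2^w}(q))$ has order $2$ and lies in every Sylow $2$-subgroup, whence $N_L(P)/P\cong N_{L/Z(L)}(P/Z(L))/(P/Z(L))$ and the computation transfers verbatim. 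The only genuine obstacle throughout is the bookkeeping in Statement~(4) — making the identifications precise enough that ``coordinate permutations fix $T$'' is actually true and not merely plausible — but with the coordinatewise choice of $S$ fixed at the outset this is routine.
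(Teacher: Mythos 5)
Your proof is correct and follows essentially the same route as the paper, which treats (1)--(3) as obvious and obtains (5) from (4) together with Lemma~\ref{N_G(S)=S} exactly as you do (including the harmless passage between $Sp_{2^w}(q)$ and $PSp_{2^w}(q)$: the dimension $2^w$ has a single term in its binary expansion, so $t=1$ in that lemma). The only divergence is in (4): the paper derives it from (3) via the Frattini argument (since $T\in Syl_2(K)$ and $K\unlhd G$ give $G=KN_G(T)$, so $N_G(T)$ covers $Sym_n$ with kernel $N_K(T)$), whereas you fix a coordinate-compatible Sylow subgroup $S=(P\times\dots\times P)\rtimes R$ and check directly that the standard top complement normalizes $T$ before splitting $g=kb$ --- a legitimate and, if anything, more self-contained variant, since identifying the extension as the wreath product requires such a compatible choice (or a conjugacy reduction) in either approach.
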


\begin{proof} Assertions $(1)$–$(3)$ are obvious. Assertion $(4)$ easily follows from $(3)$ and the Frattini Argument. Assertion $(5)$
follows from $(4)$ and Lemma~\ref{N_G(S)=S}.

\end{proof}

\medskip

In the following series of lemmas we provide some results on pronormality of subgroups of odd index in finite groups.

\begin{lemma}[\rm See {\cite[Corollary]{Kond_Mas_Rev2}}]\label{ComplAwrSn} Let $G=A\wr Sym_n= HV$ be the wreath product of an abelian
group $A$ by the symmetric group $H=Sym_n$ acting naturally on the set $\{1,\ldots,n\}$, where $V$ denotes the base of the wreath
product. Then the subgroup $H$ is pronormal in $G$ if and only if $(|A|,n)=1$.
\end{lemma}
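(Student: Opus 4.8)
Proof proposal for Lemma~\ref{ComplAwrSn} ($H = Sym_n$ pronormal in $G = A \wr Sym_n$ iff $(|A|, n) = 1$).

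The plan is to apply the criterion of pronormality for complements of abelian normal subgroups, namely Lemma~\ref{CritComplAbel}, to the decomposition $G = HV$ with $V$ the base of the wreath product. By that lemma, $H$ is pronormal in $G$ if and only if $U = N_U(H)[H,U]$ for every $H$-invariant subgroup $U$ of $V$. Here $H = Sym_n$ acts on $V = A^n = A_1 \times \dots \times A_n$ by permuting the coordinates, so the $H$-submodule structure of $V$ is governed by the permutation action; the key submodules are the "diagonal" copy $\Delta = \{(a,a,\dots,a) \mid a \in A\}$ (on which $H$ acts trivially) and the "augmentation" submodule $V_0 = \{(a_1,\dots,a_n) \mid a_1 \cdots a_n = 1\}$ (writing $A$ additively, $\sum a_i = 0$). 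First I would record that $N_V(H) = C_V(H) = \Delta$ and that $[H, V]$ is precisely $V_0$: indeed $[H,V]$ is generated by elements $v^{-1}v^\sigma$, which always lie in $V_0$, and a transposition $(i\,j)$ applied to a vector supported on coordinate $i$ produces $(a, -a)$ in positions $i, j$, and such elements generate $V_0$. So $N_V(H)[H,V] = \Delta + V_0$, and this equals $V$ if and only if $\Delta \not\le V_0$, i.e. if and only if the map $a \mapsto na$ on $A$ is surjective, equivalently (since $A$ is finite) $(|A|, n) = 1$. This already handles the choice $U = V$ and gives the necessity direction: if $(|A|,n) \ne 1$ then $V \ne N_V(H)[H,V]$, so $H$ is not pronormal.

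For sufficiency, assuming $(|A|, n) = 1$, I must verify $U = N_U(H)[H,U]$ for \emph{every} $H$-invariant subgroup $U$ of $V$, not just $U = V$. The cleanest route is to observe that when $(|A|,n)=1$ we have the internal direct decomposition $V = \Delta \oplus V_0$ as $H$-modules (since $\Delta \cap V_0 = 0$ by coprimality, and a dimension/order count gives $|V| = |\Delta|\cdot|V_0|$; alternatively $v = \tfrac{1}{n}\sum v_i \cdot \mathbf{1} + (v - \tfrac1n \sum v_i \mathbf 1)$ using that multiplication by $n$ is invertible on $A$). Then every $H$-invariant subgroup $U$ splits compatibly: $U = (U \cap \Delta) \oplus (U \cap V_0)$ — this requires a short argument using coprimality again, e.g. the projections $\pi_\Delta, \pi_{V_0}$ commute with the $H$-action and restrict to $U$. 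On $U \cap \Delta$ the group $H$ acts trivially, so $U \cap \Delta \le C_U(H) = N_U(H)$; on $U \cap V_0$ I claim $U \cap V_0 \le [H, U]$, because $H$ acts on $V_0$ with no trivial composition factors (again by coprimality the module $V_0$ is a sum of nontrivial irreducibles, or more directly $C_{V_0}(H) = \Delta \cap V_0 = 0$ forces $[H, U\cap V_0] = U \cap V_0$ via coprime action / the fact that for a coprimely acting group $P = [P,H]\times C_P(H)$). Combining, $U = (U\cap\Delta)(U \cap V_0) \le N_U(H)[H,U] \le U$, as needed.

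The main obstacle is the sufficiency direction, specifically making rigorous the claim that $[H, U] \ge U \cap V_0$ for an arbitrary $H$-invariant subgroup $U$ — one cannot simply quote coprime-action results like $P = [P,H]C_P(H)$ because $H = Sym_n$ and $|H|$ need not be coprime to $|V| = |A|^n$ (only $(|A|,n)=1$ is assumed). The resolution is to work instead with a well-chosen element of $H$: the $n$-cycle $c = (1\,2\,\cdots\,n) \in H$ generates a cyclic subgroup of order $n$, and $\langle c \rangle$ \emph{does} act coprimely on $V$ since $(|A|^n, n) = (|A|,n)^{\text{(something)}}$... wait, that needs $(|A|,n)=1$ which we have, so $(|V|, |\langle c\rangle|) = (|A|^n, n) = 1$. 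Hence for any $\langle c \rangle$-invariant (in particular $H$-invariant) subgroup $U$, coprime action gives $U = [U, c] \times C_U(c)$ with $[U,c] \le [U, H]$ and $C_U(c) = \Delta \cap U$ (since the $n$-cycle has fixed space exactly the diagonal). This immediately yields $U = [U,H] \cdot (U \cap \Delta) \le [H,U] N_U(H) \le U$, completing the proof and bypassing the need for the full module splitting. I would present the sufficiency proof in this streamlined form via the $n$-cycle, keeping the submodule discussion only as motivation.
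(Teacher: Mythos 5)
Your proof is correct. Note that the paper itself gives no proof of this lemma --- it is quoted from \cite[Corollary]{Kond_Mas_Rev2} --- but the intended derivation is exactly the route you take: apply the criterion of Lemma~\ref{CritComplAbel} (which is Theorem~1 of that same reference) to $G=HV$. Your computation for necessity ($N_V(H)=C_V(H)=\Delta$ since $H\cap V=1$, $[H,V]=V_0$, and $\Delta+V_0=V$ iff multiplication by $n$ is surjective on $A$, iff $(|A|,n)=1$) is the standard one, and matches the analogous computation the paper carries out in Lemma~\ref{WHisTheOnkyPropNormSgr} for the special case $A=\mathbb{Z}_p$. Your sufficiency argument via the $n$-cycle $c$ is a genuinely clean touch: since $\gcd(n,|A|^n)=1$, the coprime-action decomposition $U=[U,c]\times C_U(c)$ with $C_U(c)=U\cap\Delta\le C_U(H)\le N_U(H)$ and $[U,c]\le[H,U]$ disposes of \emph{all} $H$-invariant subgroups $U$ at once, avoiding any case analysis of the submodule lattice of $A^n$ (which, unlike the $\mathbb{Z}_p$ case treated in the paper, can be complicated for general abelian $A$). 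The only cosmetic caveat is the implicit assumption $n\ge 2$ in the identities $C_V(H)=\Delta$ and $[H,V]=V_0$; for $n=1$ the statement is trivial since $(|A|,1)=1$ and $H$ is then a trivial complement, so nothing is lost.
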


\begin{lemma}[\rm See {\cite[Theorem~2]{Guo_Mas_Rev}}]\label{ProdAbWRSn} Let $A$ be an abelian group and $$G=\prod\limits_{i=1}^t
(A\wr Sym_{n_i}),$$ where each $Sym_{n_i}$ acts naturally on $\{1,\dots, n_i\}$. Then the subgroups of odd index are pronormal in $G$
if and only if for any positive integer $m$, the inequality $m \preceq  n_i$ for some $i$ implies that $(|A|,m)$ is a power of $2$.
\end{lemma}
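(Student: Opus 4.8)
\emph{Reformulation via the module condition.} Writing $A=A_2\times B_0$ with $B_0$ the Hall $2'$-subgroup of $A$, one has $O_2(G)=\prod_i A_2^{\,n_i}$ and $G/O_2(G)\cong\prod_i(B_0\wr Sym_{n_i})$, while the arithmetic condition of the lemma depends only on $|A|_{2'}=|B_0|$; so by Lemma~\ref{Quot}(4) we may assume that $|A|$ is odd. Put $V=\prod_i A^{\,n_i}$ and $\bar G=G/V\cong\prod_i Sym_{n_i}$, so that $G=V\rtimes\bar G$. Since Sylow $2$-subgroups of symmetric groups, and hence of their direct products, are self-normalizing, $\bar G\in\mathbb{X}_2\subseteq\mathbb{Y}_2$, and a Frattini argument gives $N_{\bar G}(\bar H)=\bar H$ for every $\bar H\le_2\bar G$. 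Now take $H\le_2 G$ and set $\bar H=HV/V\le_2\bar G$ and $U=H\cap V$. Then $\bar H\,prn\,\bar G$ and $N_G(HV)=HV$, so Lemma~\ref{Quot}(2) gives $H\,prn\,G\iff H\,prn\,HV$. As $U\unlhd HV$ and $H/U$ is a complement to the abelian normal subgroup $V/U$ of $HV/U$, Lemma~\ref{Quot}(3) together with Lemma~\ref{CritComplAbel} shows that $H\,prn\,G$ iff $W=W^{\bar H}+[\bar H,W]$ for every $\bar H$-submodule $W$ of $V/U$. Letting $H$, hence $U$, vary, the lemma becomes: the condition $(\ast)$ ``$W=W^{\bar H}+[\bar H,W]$ for every $\bar H$-section $W$ of $V$'' holds for all $\bar H\le_2\bar G$ if and only if $(|A|,m)=1$ whenever $m\preceq n_i$ for some $i$.

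\emph{The ``only if'' direction.} Suppose $(|A|,m)$ is not a power of $2$ for some $m\preceq n_i$, and fix an odd prime $p\mid(|A|,m)$, so that $p\mid m$. Since $\mathbb{Y}_2$ is closed under quotients (Lemma~\ref{Quot}(1)) and under direct factors, and $\mathbb{Z}_p\wr Sym_{n_i}$ is a direct factor of the quotient $\prod_j(\mathbb{Z}_p\wr Sym_{n_j})$ of $G$, it suffices to produce a non-pronormal subgroup of odd index in $G_i=\mathbb{Z}_p\wr Sym_{n_i}=V_i\rtimes Sym_{n_i}$ with $V_i=\mathbb{F}_p^{\,n_i}$. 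Partition $\{1,\dots,n_i\}=I\sqcup J$ with $|I|=m$; since $m\preceq n_i$, Kummer's theorem makes $\binom{n_i}{m}$ odd, so $B=Sym_I\times Sym_J\le_2 Sym_{n_i}$. Put $H=V_J\rtimes B$ with $V_J=\mathbb{F}_p^{\,J}$; then $H\le_2 G_i$, and $V_i/(H\cap V_i)=V_i/V_J\cong\mathbb{F}_p^{\,I}$ is the natural permutation module for $Sym_I$ with $Sym_J$ acting trivially. For $W=\mathbb{F}_p^{\,I}$ one has $W^{B}=\langle\mathbf 1_I\rangle$ and $[B,W]$ equal to the augmentation submodule of $\mathbb{F}_p^{\,I}$; as $p\mid m$, $\mathbf 1_I$ lies in that submodule, so $W^B+[B,W]\subsetneq W$. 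Hence $(\ast)$ fails, $H$ is not pronormal in $HV_i$ by Lemma~\ref{CritComplAbel} and Lemma~\ref{Quot}(3), and therefore not in $G_i$ by Lemma~\ref{Overgroup}(1); thus $G\notin\mathbb{Y}_2$.

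\emph{The ``if'' direction.} Assume $(|A|,m)=1$ whenever $m\preceq n_i$; fix $H\le_2 G$ and verify $(\ast)$. Splitting $A$ into its primary components makes $(\ast)$ a statement one prime at a time, so fix an odd prime $p\mid|A|$ and work with the $\bar H$-module $V_{(p)}=\prod_i A_{(p)}^{\,n_i}$. With $\bar H_i=\pi_i(\bar H)\le_2 Sym_{n_i}$ we get $V_{(p)}=\bigoplus_{i}\bigoplus_{O}A_{(p)}^{\,O}$, the inner sum over $\bar H_i$-orbits $O$. The decisive combinatorial input is that, by Kummer's theorem applied to the multinomial coefficient $|Sym_{n_i}:\prod_{O}Sym(O)|$ (which must be odd), every orbit satisfies $|O|\preceq n_i$; hence $(|A|,|O|)=1$, so $|O|$ is a unit modulo $|A_{(p)}|$, giving a $\bar H$-equivariant splitting $A_{(p)}^{\,O}=A_{(p)}\mathbf 1_O\oplus(A_{(p)}^{\,O})^0$ with $A_{(p)}\mathbf 1_O$ trivial, and the same bound shows $\bar H_i^{O}\le_2 Sym_{|O|}$ so that one may induct on $|O|$.

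\emph{The main obstacle.} The hard part is to control the ``hearts'' $(A_{(p)}^{\,O})^0$: one must show (under the hypothesis, via $|O|\preceq n_i$ and the induction on $|O|$, which reduces matters to complement pronormality statements of the type of Lemma~\ref{ComplAwrSn}) that $(A_{(p)}^{\,O})^0$ carries no ``bad'' trivial $\bar H$-section --- equivalently, that the relevant first cohomology groups vanish and that the trivial module occurs exactly once as a composition factor of $A_{(p)}^{\,O}$. Granting this, $V_{(p)}$ is a direct sum of a trivial module and a module $M$ having no trivial composition factor and with $H^1(\bar H,M')=0$ for all its sections $M'$; since then $M^{\bar H}=0$ and no submodule of a section of $V_{(p)}$ can ``diagonally'' avoid the trivial summand, a routine analysis of the submodule lattice yields $(\ast)$, and the equivalence of $(1)$ and the arithmetic condition follows. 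Establishing the cohomological/composition-factor claim for odd-index transitive subgroups of symmetric groups acting on permutation modules with unit orbit length is, I expect, the genuinely technical step; everything else is bookkeeping with Lemmas~\ref{Quot}, \ref{Overgroup}, \ref{CritComplAbel}, \ref{ComplAwrSn} and the fact that $\prod_i Sym_{n_i}\in\mathbb{X}_2$.
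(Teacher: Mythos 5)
Your reduction and your ``only if'' half are essentially fine: passing to $|A|$ odd, rewriting pronormality of an odd-index $H$ via Lemma~\ref{Quot} and Lemma~\ref{CritComplAbel} as a fixed-points-plus-commutators condition, and the explicit witness $H=V_J\rtimes(Sym_I\times Sym_J)$ with $p\mid m=|I|$, where the all-ones vector falls into the augmentation submodule, is a correct and complete construction of a non-pronormal subgroup of odd index (note that this paper does not prove the lemma at all --- it quotes it from \cite[Theorem~2]{Guo_Mas_Rev} --- so your argument is being judged on its own). The combinatorial observation that odd index of $\bar H_i$ forces the orbit lengths to satisfy $|O|\preceq n_i$, hence $(|A|,|O|)=1$, is also the right input for the converse.

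The gap is that the ``if'' direction --- the substantial half of the lemma --- is not proved. You reduce it to the claim that each heart $(A_{(p)}^{\,O})^{0}$ has no trivial $\bar H$-composition factor (together with an $H^1$-vanishing statement), and then you explicitly defer exactly that claim as ``the genuinely technical step''. But that claim \emph{is} the theorem: trivial composition factors can hide inside induced modules whenever $p$ divides the order of the group induced on a block system, so establishing it requires a genuine induction along imprimitivity systems of the odd-index transitive groups $\bar H_i^{O}$ (with Jordan's theorem, as in Remark~1 of this paper, settling the primitive case), and none of this is carried out; nor is it checked that the statement, in the generality you need, is even true. Two further points. First, your framework is harder than necessary: because you let $U=H\cap V$ vary and work with arbitrary sections of $V$, you are pushed towards $\mathrm{Ext}$/$H^1$ considerations, whereas Lemma~\ref{PronHomImag} exists precisely to avoid this --- it reduces everything to the split case $\overline{H}\,prn\,\overline{H}V$, where Lemma~\ref{CritComplAbel} only asks for $U=C_U(\overline H)+[\overline H,U]$ for $\overline H$-invariant subgroups $U\le V$, and then Lemma~\ref{SelfNormSylInQuot} (or your Quot(2) step) finishes. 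Second, even inside your own scheme the $H^1$ half of the deferred claim is redundant: if $M$ had no trivial composition factor, then any submodule $N\le T\oplus M$ satisfies $N=(N\cap T)\oplus(N\cap M)$, because the induced homomorphism from the (trivial) image of $N$ in $T$ to $M/(N\cap M)$ lands in the fixed points of a section of $M$ and hence vanishes; so all sections split off their trivial parts and $(\ast)$ follows with no cohomology. What is missing, and what any proof must supply, is precisely the composition-factor assertion you left unproven.
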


\begin{lemma}[\rm See {\cite{Kond_Mas_Rev4}}]\label{PSpPRN} Let $G=PSp_{2n}(q)$. Then $G \in \mathbb{Y}_2$ if and only if one of the
following statements holds:

$(1)$ $q \not \equiv \pm 3 \pmod 8${\rm;}

$(2)$ $n$ is of the form $2^w$ or $2^w(2^{2k}+1)$, where $k$ and $w$ are non-negative integers.

\end{lemma}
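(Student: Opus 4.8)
The plan is to split according to $q\bmod 8$ and, in the remaining case, to transfer the question from $Sp_{2n}(q)$ to the wreath product $\mathbb{Z}_3\wr Sym_n$, whose $\mathbb{Y}_2$-membership is decided by Lemma~\ref{ProdAbWRSn}. Since $q$ is odd, $Z(Sp_{2n}(q))$ has order $2$ and lies in every subgroup of odd index, so by Lemma~\ref{Quot} the $\mathbb{Y}_2$-property is the same for $Sp_{2n}(q)$ and $PSp_{2n}(q)$; I would work with $G=Sp_{2n}(q)$. If $q\not\equiv\pm3\pmod 8$, that is $q\equiv\pm1\pmod 8$, then $N_G(S)=S$ for $S\in Syl_2(G)$ by Lemma~\ref{N_G(S)=S}$(1)$, hence $G\in\mathbb{X}_2\subseteq\mathbb{Y}_2$ and statement $(1)$ holds; this settles the equivalence whenever $(1)$ is true. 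It remains to treat $q\equiv\pm3\pmod 8$ and to prove $G\in\mathbb{Y}_2\iff(2)$.

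Fix the odd-index imprimitive subgroup $M=Sp_2(q)\wr Sym_n$ stabilizing a decomposition of the natural module into $n$ non-degenerate $2$-spaces (Lemma~\ref{CMSOIPSp}$(3)$ with $m=2^0$), with base $K=Sp_2(q)^n$, and pick $S\in Syl_2(M)=Syl_2(G)$ and $T=K\cap S$. Because $N_{Sp_2(q)}(Q_8)/Q_8\cong\mathbb{Z}_3$, Lemma~\ref{NormInWrProdPSpSn}$(5)$ gives $N_M(T)/T\cong\mathbb{Z}_3\wr Sym_n$. Sylow $2$-subgroups of symmetric groups are self-normalizing, so $M/K\cong Sym_n\in\mathbb{X}_2$; granting the auxiliary fact $K=Sp_2(q)^n\in\mathbb{Y}_2$, Proposition~\ref{GeneralCriterium}$(ii)$ applied to $M$ with $A=K$ yields $M\in\mathbb{Y}_2\iff N_M(T)/T\in\mathbb{Y}_2\iff\mathbb{Z}_3\wr Sym_n\in\mathbb{Y}_2$. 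By Lemma~\ref{ProdAbWRSn} with the single factor $A=\mathbb{Z}_3$, the group $\mathbb{Z}_3\wr Sym_n$ lies in $\mathbb{Y}_2$ precisely when $3\nmid m$ for every positive $m\preceq n$. Writing $n$ in binary and using $2^s\equiv(-1)^s\pmod 3$, a submask $m\preceq n$ is a multiple of $3$ exactly when the number of its chosen bits in even positions is congruent modulo $3$ to the number chosen in odd positions. A direct check shows every nonzero submask avoids this congruence if and only if $n$ has at most two nonzero binary digits and, when it has exactly two, they sit in positions of the same parity --- equivalently $n=2^w$ or $n=2^w(2^{2k}+1)$. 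Hence condition $(2)$ is equivalent to $M\in\mathbb{Y}_2$.

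Finally I must pass between $M$ and $G$. One direction is immediate: if $(2)$ fails then $M\notin\mathbb{Y}_2$, so some subgroup $H\le M$ of odd index is not pronormal in $M$; since $H$ has odd index in $G$ and contains a Sylow $2$-subgroup of $G$, the contrapositive of Lemma~\ref{Overgroup}$(1)$ shows $H$ is not pronormal in $G$, so $G\notin\mathbb{Y}_2$. The converse, that $(2)$ forces pronormality of every odd-index subgroup of $G$ --- in particular of those not contained in $M$ --- is the crux and the step I expect to be hardest. Here I would induct on $n$ through the maximal odd-index subgroups of Lemma~\ref{CMSOIPSp}: when $n=2^w$ one descends through the blocks of type $(3)$, each halving contributing only a benign $\mathbb{Z}_3\wr Sym_2\in\mathbb{Y}_2$, while when $n=2^w(2^{2k}+1)$ one first separates the two power-of-two blocks by a subgroup of type $(2)$ and combines the two resulting power-of-two cases; throughout, the detailed criterion Proposition~\ref{GeneralCriterium}$(i)$ reduces pronormality in $G$ to pronormality inside a normalizer quotient $N_G(T)/T$ that again embeds in a product of wreath products $\mathbb{Z}_3\wr Sym_\bullet$ governed by Lemma~\ref{ProdAbWRSn}.

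The two points carrying the genuine difficulty are this inductive converse and the control of $N_G(S)$, namely showing it is housed inside $M$ so that the reduction to $\mathbb{Z}_3\wr Sym_n$ is faithful in both directions. The auxiliary fact $K=Sp_2(q)^n\in\mathbb{Y}_2$ used above is the symmetric-group-free instance of the same analysis: there $N_K(T)/T\cong\mathbb{Z}_3^n$ is abelian, so all its subgroups are pronormal and no arithmetic obstruction appears, which is exactly why products of the single-block case behave well. I would establish it as a separate base ingredient before running the main induction, so that the whole argument reduces, at every stage, to the transparent combinatorics of submasks of $n$ encoded by Lemma~\ref{ProdAbWRSn}.
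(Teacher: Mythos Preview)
The paper does not prove Lemma~\ref{PSpPRN}; it is quoted verbatim from \cite{Kond_Mas_Rev4}, so there is no in-paper proof to compare your attempt against. That said, the paper does contain, implicitly, the harder direction you flag: the implication ``condition $(2)$ (with $q\equiv\pm3\pmod 8$) $\Rightarrow G\in\mathbb{Y}_2$'' is exactly the $t=1$ case of the proof of $(3)\Rightarrow(1)$ in Theorem~\ref{ProdPSp2w2w(22k+1)}, and your inductive sketch (descend through the maximal odd-index overgroups of Lemma~\ref{CMSOIPSp}, using Lemma~\ref{Overgroup} together with $N_G(S)=N_M(S)$ from Lemma~\ref{N_G(S)=S}, and finish via Proposition~\ref{GeneralCriterium}$(ii)$ and Lemma~\ref{ProdAbWRSn}) matches that argument closely. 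So for that half you are not missing an idea---you are rediscovering the paper's own machinery.

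Your treatment of the ``only if'' direction is complete and correct, and in fact this is the direction the paper \emph{outsources} to \cite{Kond_Mas_Rev4}: from $M=Sp_2(q)\wr Sym_n$, the chain $M\in\mathbb{Y}_2\iff \mathbb{Z}_3\wr Sym_n\in\mathbb{Y}_2$ via Proposition~\ref{GeneralCriterium}$(ii)$ and Lemma~\ref{NormInWrProdPSpSn}$(5)$, the criterion of Lemma~\ref{ProdAbWRSn}, and the binary-mask computation showing that $3\nmid m$ for all $0<m\preceq n$ iff $n=2^w$ or $n=2^w(2^{2k}+1)$, together with Lemma~\ref{Overgroup}$(1)$ to pull a non-pronormal witness from $M$ up to $G$, is a clean self-contained argument.

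Two small remarks. First, the ``auxiliary fact'' $K=Sp_2(q)^n\in\mathbb{Y}_2$ that you propose to establish separately is already in the paper as Lemma~\ref{ProdPSp2w} (take all $n_i=1$), so you can simply cite it. Second, the control of $N_G(S)$ you list as a hard point is in fact handled by Lemma~\ref{N_G(S)=S}: comparing the $3$-ranks of $N_G(S)/S$ and of $N_M(S)/S$ (the latter read off from $N_M(T)/T\cong\mathbb{Z}_3\wr Sym_n$) gives $N_G(S)\le M$, which is exactly how the paper uses it in the proof of Theorem~\ref{ProdPSp2w2w(22k+1)}.
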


\begin{lemma}[\rm See {\cite[Theorem~3]{Guo_Mas_Rev}}]\label{ProdPSp2w} Let $G =\prod\limits_{i=1}^t G_i$, where for each $i \in \{1,
\dots , t\}$, $G_i \cong Sp_{2n_i}(q_i)$, each $q_i$ is odd, and each $n_i$ is a power of $2$. Then all the subgroups of odd index are
pronormal in $G$.
\end{lemma}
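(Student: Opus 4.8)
The plan is to reduce the assertion, by induction on $\max_i n_i$, to an application of Lemma~\ref{ProdAbWRSn} with the abelian group $\mathbb{Z}_3$; the reduction runs through the odd-index subgroups $SL_2(q_i)\wr Sym_{n_i}$ provided by Lemma~\ref{CMSOIPSp}$(3)$ and is carried out by Proposition~\ref{GeneralCriterium}$(ii)$, the hypothesis that each $n_i$ is a power of $2$ being used repeatedly, above all to keep $N_{G_i}(S_i)/S_i$ as small as a single $\mathbb{Z}_3$. First I would split off the factors $G_i$ with $q_i\equiv\pm 1\pmod 8$: for such $i$, $G_i\in\mathbb{X}_2$ by Lemma~\ref{N_G(S)=S}$(1)$ applied to $G_i/Z(G_i)\cong PSp_{2n_i}(q_i)$ (note $|Z(G_i)|=2$, so the Sylow $2$-subgroup of $G_i$ is self-normalizing exactly when that of $G_i/Z(G_i)$ is), and, writing $G=X\times Y$ with $X$ the product of these factors, Lemma~\ref{XtimesY}$(ii)$ lets us replace $G$ by $Y$. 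Henceforth $q_i\equiv\pm 3\pmod 8$ for all $i$, so that, as $2n_i=2^{w_i+1}$ has a one-term binary expansion, Lemma~\ref{N_G(S)=S}$(2)$ gives $N_{G_i}(S_i)/S_i\cong\mathbb{Z}_3$ for $S_i\in Syl_2(G_i)$.

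Now I would induct on $\max_i n_i$. If $\max_i n_i=1$ then $G=\prod_iSL_2(q_i)$ with $q_i\equiv\pm 3\pmod 8$, so $Syl_2(SL_2(q_i))\cong Q_8$ and $N_{SL_2(q_i)}(Q_8)\cong SL_2(3)$; this base case I would settle by the (routine but case-heavy) classification of the odd-index overgroups of $Q_8$ in $SL_2(q)$ from \cite{Kond_Mas_Rev1,Kond_Mas_Rev2,Kond_Mas_Rev4}, using that $SL_2(q_i)\in\mathbb{Y}_2$ (Lemma~\ref{PSpPRN}), that every such overgroup not already in $\mathbb{X}_2$ contains the subgroup $SL_2(3)=N_{SL_2(q_i)}(Q_8)$, and that in $SL_2(3)^k$ every overgroup of a Sylow $2$-subgroup is normal; Lemma~\ref{Overgroup}$(2)$ together with Lemma~\ref{XtimesY} then reduce pronormality in $\prod_iSL_2(q_i)$ to pronormality in a direct product of copies of $SL_2(3)$. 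For the inductive step ($\max_i n_i\ge 2$) fix an odd-index subgroup $H\le G$ and a Sylow $2$-subgroup $S=\prod_iS_i\le H$, and put $H_i=\pi_i(H)$. As $|S_i|>2$, the equality $H_i=G_i$ forces $G_i\cap H=G_i$; splitting off as a direct factor of $G$ each such $G_i$ (for which pronormality of $H$ is equivalent to pronormality of the remaining projection), we may assume $H_i<G_i$ for every $i$, so that each $H_i$ lies in a maximal subgroup of odd index of $G_i$. By Lemma~\ref{CMSOIPSp}, for $n_i$ a power of $2$ the subspace-stabilizers of type $(2)$ do not occur, and the type-$(1)$ subfield subgroups again have odd index (since $|Sp_{2n_i}(q_0)|_2=|Sp_{2n_i}(q_i)|_2$ whenever $q_0\equiv q_i\pmod 8$) with nested Sylow $2$-normalizers; descending through them finitely often and using Lemma~\ref{Overgroup}$(2)$, we may therefore assume each $H_i$ contained in a subgroup $M_i$ conjugate to the type-$(3)$ subgroup $SL_2(q_i)\wr Sym_{n_i}$ (so that $S_i\le M_i$), except in finitely many small-rank configurations ($n_i\le 2$ with $H_i$ meeting only a subgroup of exceptional type $(5)$--$(9)$), which are treated directly with the help of $G_i\in\mathbb{Y}_2$ and of the fact that the subgroups of type $(8)$ in $SL_2(q)$ lie in $\mathbb{X}_2$. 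Writing $K_i$ for the base of $M_i$ and $T_i=S_i\cap K_i$, Lemma~\ref{NormInWrProdPSpSn} shows that $N_{M_i}(T_i)$ contains a diagonal subgroup $\cong\mathbb{Z}_3$ of $N_{SL_2(q_i)}(Q_8)^{n_i}=SL_2(3)^{n_i}$ that normalizes $S_i$; hence $|N_{M_i}(S_i)|\ge 3|S_i|$, and since $N_{M_i}(S_i)\le N_{G_i}(S_i)$ with $|N_{G_i}(S_i)|=3|S_i|$, we get $N_{G_i}(S_i)=N_{M_i}(S_i)\le M_i$. Thus $N_G(S)=\prod_iN_{G_i}(S_i)\le M:=\prod_iM_i$, the index $|G:M|$ is odd, and Lemma~\ref{Overgroup}$(2)$ reduces the pronormality of $H$ in $G$ to its pronormality in $M$.

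It remains to show $M=\prod_{i=1}^t\bigl(SL_2(q_i)\wr Sym_{n_i}\bigr)\in\mathbb{Y}_2$. Put $A=\prod_iK_i\cong\prod_iSL_2(q_i)^{n_i}$, so that $A\unlhd M$ and $M/A\cong\prod_iSym_{n_i}$. Since each $n_i=2^{w_i}$ and the Sylow $2$-subgroup of $Sym_{2^w}$ is self-normalizing, $M/A\in\mathbb{X}_2$; and $A$ is a direct product of copies of the rank-one groups $Sp_2(q_i)$, hence $A\in\mathbb{Y}_2$ by the induction hypothesis (its value of $\max$-rank is $1$). Applying Proposition~\ref{GeneralCriterium}$(ii)$ to $M$ with this $A$ and $T=A\cap S=\prod_iT_i\in Syl_2(A)$, we obtain that $M\in\mathbb{Y}_2$ if and only if $N_M(T)/T\in\mathbb{Y}_2$. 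Now $M=\prod_iM_i$ and $T=\prod_iT_i$, so Lemma~\ref{NormInWrProdPSpSn}$(5)$ (with $L=Sp_2(q_i)=SL_2(q_i)$) gives
$$N_M(T)/T\ \cong\ \prod_{i=1}^t\bigl((N_{SL_2(q_i)}(Q_8)/Q_8)\wr Sym_{n_i}\bigr)\ \cong\ \prod_{i=1}^t\bigl(\mathbb{Z}_3\wr Sym_{n_i}\bigr).$$
Since $n_i=2^{w_i}$ has a one-term binary expansion, the only positive integer $m$ with $m\preceq n_i$ is $m=n_i$, and $(|\mathbb{Z}_3|,n_i)=(3,2^{w_i})=1$ is a power of $2$; hence Lemma~\ref{ProdAbWRSn} yields $\prod_i(\mathbb{Z}_3\wr Sym_{n_i})\in\mathbb{Y}_2$, so $N_M(T)/T\in\mathbb{Y}_2$, whence $M\in\mathbb{Y}_2$. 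This completes the induction and the proof.

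The main obstacle is the passage from the individual factors to their direct product. Since $\mathbb{Y}_2$ is not closed under direct products --- indeed there are simple $\mathbb{Y}_2$-groups $N$ with $N\times N\notin\mathbb{Y}_2$ --- a factor-by-factor argument is impossible, and it is essential that after the reductions each factor contributes only a single copy of $\mathbb{Z}_3$ above its Sylow $2$-subgroup, which is exactly what fails when some $n_i$ is not a power of $2$, and that these copies assemble into the wreath products $\mathbb{Z}_3\wr Sym_{n_i}$ to which Lemma~\ref{ProdAbWRSn} applies. Checking that the overgroups of $S$ not contained in $\prod_iM_i$ --- the exceptional small-rank subgroups in the inductive step together with the subgroups occurring in the rank-one base case --- do not spoil pronormality is the technical heart of the argument.
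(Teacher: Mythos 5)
This lemma is not proved in the paper at all --- it is imported as \cite[Theorem~3]{Guo_Mas_Rev} --- so your outline can only be compared with the machinery the paper displays in its proof of Theorem~1, which you do broadly follow (pass to odd-index overgroups of wreath type, then combine Proposition~\ref{GeneralCriterium} with Lemma~\ref{ProdAbWRSn} applied to $\prod_i\mathbb{Z}_3\wr Sym_{n_i}$). Within that outline, however, two steps have genuine gaps. The first is the reduction ``each proper $H_i$ lies in a conjugate of the type-$(3)$ subgroup $SL_2(q_i)\wr Sym_{n_i}$''. This is false: for $n_i=2^{w_i}\ge 4$, Lemma~\ref{CMSOIPSp}$(3)$ provides maximal odd-index decomposition stabilizers $Sp_{2m}(q_i)\wr Sym_{n_i/m}$ for \emph{every} $2$-power $m<n_i$, and an odd-index subgroup may lie in one with $m>1$ and in no conjugate of the $m=1$ stabilizer --- for instance $H_i=Sp_4(q_i)\wr Sym_{n_i/2}$ itself, or a subgroup whose base components are the type-$(9)$ groups $2^4.Alt_5\le Sp_4(q_i)$, which preserve no decomposition into $2$-dimensional summands. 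Hence you cannot force the wreath base $A$ to be a product of rank-one groups, and your induction cannot be arranged so that only the case $\max_i n_i=1$ is ever invoked. The step is repairable inside your own scheme (allow arbitrary $2$-power block sizes; then $A$ is again a product of groups $Sp_{2m_i}(q_i)$ covered by the induction hypothesis, and the computation of $N_M(T)/T$ and the appeal to Lemma~\ref{ProdAbWRSn} work with $Sym_{n_i}$ replaced by $Sym_{n_i/m_i}$), but as written it is wrong, and your later simplifications lean on it.

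The second gap is that the base case and the exceptional configurations are asserted rather than proved, although you yourself call them the technical heart. For $G=\prod_i SL_2(q_i)$ with $q_i\equiv\pm3\pmod 8$, the proposed reduction ``to a direct product of copies of $SL_2(3)$'' via Lemma~\ref{Overgroup}$(2)$ and Lemma~\ref{XtimesY} does not go through: Lemma~\ref{XtimesY} requires a \emph{direct factor of the ambient group} to lie in $\mathbb{X}_2$, and no factor $SL_2(q_i)$ is in $\mathbb{X}_2$; the fact that some projections $\pi_i(H)$ happen to be $\mathbb{X}_2$-groups (dicyclic overgroups of $Q_8$, subgroups of $2.D_{q_i+1}$) yields no such decomposition of $G$, and you exhibit no proper overgroup of $H$ containing $N_G(S)=\prod_i SL_2(3)$ to which Lemma~\ref{Overgroup}$(2)$ could be applied. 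Knowing $SL_2(q_i)\in\mathbb{Y}_2$ (Lemma~\ref{PSpPRN}) does not help, since, as you note, $\mathbb{Y}_2$ is not closed under direct products. The same criticism applies to the configurations in which some $\pi_i(H)$ lies only in a subgroup of type $(5)$--$(9)$ of Lemma~\ref{CMSOIPSp} (e.g. $2.Alt_5$ or $2^4.Alt_5$), which you dismiss in one clause without an argument; and for factors isomorphic to $SL_2(3)$, allowed by the hypotheses, even the step ``$\pi_i(H)=G_i$ forces $G_i\le H$'' fails, because Lemma~\ref{Projections} needs $G_i/O_2(G_i)$ to be almost simple, while $SL_2(3)/Q_8\cong\mathbb{Z}_3$ is not (and in $SL_2(3)\times SL_2(3)$ there are odd-index subgroups projecting onto both factors containing neither). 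Until these small cases are actually worked out, the proof is incomplete.
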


\section{Proof of Theorem~\ref{ProdPSp2w2w(22k+1)}}\label{ProofT1}

$(3) \Rightarrow (1)$ Let us tell that a group $G$ satisfy {\it condition $(*)$} if the following statements hold:

\begin{itemize}
\item[---] $G =\prod\limits_{i=1}^t G_i$, where $G_i \cong Sp_{2n_i}(q_i)$ for each $i \in \{1, . . . , t\}${\rm;}

\item[---] each $q_i$ is odd{\rm;}

\item[---] if $q_i \equiv \pm 3 \pmod{8}$ for some $i$, then $n_i$ is either a power of $2$ or is a number of the form
    $2^{w_i}(2^{2k_i}+1)$, where $w_i$ and $k_i$ are non-negative integers.

\end{itemize}

 Assume that $G$ is a group of the smallest order satisfying~$(*)$, such that $G$ contains a non-pronormal subgroup $H\le_2
 G$, and take some $S \in Syl_2(G)$ with $S \le H$. By Lemma~\ref{N_G(S)=S}, $Sp_{2n_i}(q)\in\mathbb{X}_2$ if $q_i\equiv\pm1\pmod 8$, therefore, Lemma~\ref{XtimesY} and the minimality of $G$ implies that $q_i\equiv \pm 3\pmod 8$ for each $i \in \{1, \ldots t\}$.


Let $\pi_i: G \rightarrow G_i$ be the projection for each $i \in \{1, \ldots t\}$. If $\pi_i(H)=G_i$ for some~$i$, then $G_i \le H$ by
Lemma~\ref{Projections}. Thus, $H/G_i$ is a non-pronormal subgroup of odd index in $G/G_i \cong \prod\limits_{j \not =i} G_i$ by
Lemma~\ref{Quot}. But the group $G/G_i$ satisfy condition~$(*)$, a contradiction to the minimality of $G$.

So, for each $i$, there exists a maximal subgroup $M_i < G_i$ such that $\pi_i(H) \le M_i$. Thus, for any $i$,
$$H \le M(i)=\prod\limits_{j \not = i} G_j \times M_i.$$
Possibilities for $M_i$ are listed in Lemma~\ref{CMSOIPSp}.
\smallskip

Assume that for some $i$, $M_i \cong Sp_{2n_i}(\tilde{q}_{i})$, where $q_i=\tilde{q}_{i}^{r_i}$ and $r_i$ is an odd prime. Note that
$q_i \equiv \pm 3 \pmod 8$ implies $\tilde{q}_{i} \equiv \pm 3 \pmod 8$. It is easy to see that in this case $M(i)$ satisfies
condition $(*)$, and $H$ is pronormal in $M(i)$ by the minimality of~$G$.  Moreover, $N_G(S)=N_{M(i)}(S)$ by Lemma~\ref{N_G(S)=S}.
Thus, $H \,prn\, G$ by Lemma~\ref{Overgroup}, a contradiction.

\smallskip

Assume that for some $i$, $M_i$ is the stabilizer of a non-degenerate subspace of dimension $2m_i$ of the natural module of $G_i$, and
$n_i \succ  m_i$. Note that $$M_i \cong Sp_{2m_i}(q_i) \times Sp_{2(n_i-m_i)}(q_i).$$ In this case $M(i)$ satisfies condition $(*)$ since $n_i \succ  m_i$.
By the minimality of $G$, $H \,prn\, M$.
Moreover, since $n_i \succeq  m_i$, we have $N_G(S)=N_{M(i)}(S)$ by Lemma~\ref{N_G(S)=S}. Thus, $H \,prn\, G$ by
Lemma~\ref{Overgroup}, a contradiction.

\smallskip

So, by Lemma~\ref{CMSOIPSp}, for each $i$ such that $n_i$ is not a power of $2$, $M_i$ is the stabilizer of an orthogonal
decomposition of the corresponding natural module $V_i$ of $G_i$ into a sum of pairwise isometric non-degenerate subspaces of
dimension $2s_i$, and $s_i = 2^{w_i}$ for a non-negative integer $w_i$. Note that by~\cite[Proposition~4.9.2]{Kleidman_Liebeck}, in
this case we have $$M_i \cong Sp_{2s_i}(q_i) \wr Sym_{m_i},$$ where $n_i=s_im_i$  and  $Sym_{m_i}$ acts naturally on
 $\{1,\ldots,m_i\}$.
Put $$R_i=\begin{cases} M_i, \mbox{ if } n_i \mbox{ is not a power of } 2,\\
G_i, \mbox{ otherwise.}
\end{cases}$$
If $n_i$ is a power of $2$, then put $s_i=n_i$ and $m_i=1$. Thus, $$H \le_2 R = \prod\limits_{i=1}^{t} R_i \cong
\prod\limits_{i=1}^{t} Sp_{2s_i}(q_i) \wr Sym_{m_i}=N\rtimes C,$$ where $N=\prod\limits_{i=1}^{t} (Sp_{2s_i}(q_i))^{m_i}$ and
$C=\prod\limits_{i=1}^{t}Sym_{m_i}$.

Note that $N \in \mathbb{Y}_2$ by Lemma~\ref{ProdPSp2w}. Moreover, $R/N\cong C \in \mathbb{X}_2$ (see, for example,
\cite[Lemma~4]{CarterFong}).

\smallskip

Let $T \in Syl_2(N)$. Then by Lemmas~\ref{N_G(S)=S} and~\ref{NormInWrProdPSpSn},
$$N_R(T)/T \cong \prod\limits_{i=1}^{t} \mathbb{Z}_3\wr Sym_{m_i}. 
$$
Note that for each $i \in \{1,\dots,t\}$,
$3$ does not divide $m$ if $m\preceq m_i$. Indeed, since each ${q_i\equiv \pm 3\pmod 8}$, $(*)$ implies that $m_i=2^{w_i}$ or $m_i=2^{w_i}(2^{2k_i}+1)$ for some non-negative integers $w_i$ and $k_i$. Therefore,  if $m\preceq m_i$, then $m$ is ether a power of $2$ itself or $$m=2^{w_i}(2^{2k_i}+1)\equiv (-1)^{w_i+1}\pmod 3.$$ Thus, by
 Lemma~\ref{ProdAbWRSn}, $N_R(T)/T \in \mathbb{Y}_2$. Hence, $R \in \mathbb{Y}_2$ by Proposition~\ref{GeneralCriterium} and
 $H$ is pronormal in $R$.
Moreover, $N_G(S)=N_R(S)$ by Lemma~\ref{N_G(S)=S}. Thus, $H$ is pronormal in $G$ by Lemma~\ref{Overgroup}, a contradiction.

\medskip

$(1)\Rightarrow (2)$ It is easy to see that if for some $i$, $G_i$  contains a non-pronormal subgroups $H_i$ of odd index, then $G$
contains a non-pronormal subgroup $H_i \times \prod_{j\not = i} G_j$ of odd index. Thus, $G_i \in \mathbb{Y}_2$ for each $i\in \{1,\ldots,t\}$.

\medskip

$(2) \Rightarrow (3)$ follows from Lemma~\ref{PSpPRN}. \hfill $\Box$

\medskip

\noindent{\bf Remark 1.} Theorem~\ref{ProdPSp2w2w(22k+1)} provides a criterion when all the subgroups of odd index are pronormal in the direct product of symplectic groups over fields of odd characteristics. 
However, it easy follows form the proof of Theorem~\ref{ProdPSp2w2w(22k+1)} that solving General Problem for a subgroup $H$ of odd index in the direct product $G$ of symplectic groups over fields of odd characteristics is reducible to solving General Problem for the subgroup $H$ in some subgroup $$R=\prod\limits_{i=1}^{t} Sp_{2s_i}(q_i) \wr Sym_{m_i},$$ where each $s_i$ is a power of $2$, each $q_i \equiv \pm 3 \pmod{8}$, and each $Sym_{m_i}$ acts naturally on $\{1,\dots, m_i\}$, moreover, we can assume that this action is primitive. 
Proposition~\ref{GeneralCriterium} allows to reduce solving General Problem for $H$ in $R$ to solving General Problem for a subgroups $H^*$ (which depends on $H$) of odd index in the factor-group $N_R(T)/T$, where $T$ is a Sylow $2$-subgroup of $H\cap A$ and $A$ is the base of the wreath product $R$, and $$N_R(T)/T \cong \prod\limits_{i=1}^{t} \mathbb{Z}_3\wr Sym_{m_i}.$$
Moreover, $H^*$ projects naturally to each $Sym_{m_i}$, and we can assume that the image of $H^*$ is a primitive subgroup of $Sym_{m_i}$ for each $i$. Since $H^*$ is a subgroup of odd index in $N_R(T)/T$, its image in $Sym_{m_i}$ contains a transposition and, therefore, by the well-known Jordan theorem \cite[Theorem~13.3]{Wielandt}, the image of $H^*$ coincides with $Sym_{m_i}$ for each $i \in \{1,\ldots,t\}$. In Section~\ref{SPT2} we obtain a criterion which allows to answer the question, is $H^*$ pronormal in~$N_R(T)/T$?

\section{Pronormality of a subgroup of odd index in the direct product of groups of the type $\mathbb{Z}_{p}\wr Sym_{n}$}\label{SPT2}

Let $$G =\prod\limits_{i=1}^t G_i = V \rtimes B, \mbox{ where } G_i \cong \mathbb{Z}_{p_i}\wr Sym_{n_i} \mbox{ for each  } i,$$ each
$p_i$ is an odd prime, and each $Sym_{n_i}$ acts naturally on $\{1,\dots, n_i\}$; $$V=\sum_{i=1}^t V_i, \mbox{ where }
V_i=(\mathbb{Z}_{p_i})^{n_i} \mbox{ for each } i,$$ and $$B=\prod\limits_{i=1}^t B_i, \mbox{ where  } B_i=Sym_{n_i} \mbox{ for each }
i.$$

Define the map $\overline{\phantom{x}}: G \rightarrow B$ such that $\overline{g}=b$ for $g=vb$, where $v\in V$ and $b \in B$.

\smallskip

Let $\pi_i: G \rightarrow G_i$ be the projection for each $i \in \{1, \ldots, t\}$.

\smallskip

It is easy to see that $G_i = V_i \rtimes B_i$ for each $i$. Let $\overline{\phantom{x}}^i: G_i \rightarrow B_i$ be the corresponding
natural epimorphism for each $i \in \{1, \ldots, t\}$.
\smallskip

Let $\overline{\pi_i}: B \rightarrow B_i$ be the corresponding projection for each $i \in \{1, . . . , t\}$.

\smallskip

It is easy to see that for each $i \in \{1, . . . , t\}$, the corresponding diagram

$$
\begin{CD}
G @>>\pi_i> G_i\\
@VV\overline{\phantom{x}}V @VV\overline{\phantom{x}}^iV \\
B @>>\overline{\pi_i}>    B_i \\
\end{CD}
$$
is commutative.

\medskip

\bigskip

{{ In this section, we obtain a criterion of pronormality of a subgroup $H \le_2 G$ such that $\overline{\pi_i(H)}^i$ is a primitive
subgroup of $Sym_{n_i}$ for each $i \in \{1,\ldots, t\}$ or, equivalently, $\overline{H}=B$.}} Let us explain that $\overline{H}=B$ if each $\overline{\pi_i(H)}^i$ is a primitive
subgroup of $Sym_{n_i}$. Note that for each $i \in \{1,\ldots, t\}$, ${\pi_i(H) \le_2 G_i}$, therefore, ${\overline{\pi_i(H)}^i
\le_2 B_i=Sym_{n_i}}$ and so, $\overline{\pi_i(H)}^i$ contains a transposition. Thus, by \cite[Theorem~13.3]{Wielandt},
$${\overline{\pi_i(H)}^i=B_i=Sym_{n_i}} \mbox{ for each } i \in \{1,\ldots,t\}.$$ Recall that for each $i$, $B_i=Sym_{n_i}\in \mathbb{X}_2$ by \cite[Lemma~4]{CarterFong}. Thus, ${\overline{H}=\prod\limits_{i=1}^t \overline{\pi_i(H)}^i= B}$ by Lemma~\ref{XtimesYprojections}.

\bigskip



\bigskip

First, consider the case $t=1$ and specify our notation for this situation.

Let $G={\mathbb{Z}_p \wr Sym_n} ={V \rtimes B}$, where $p$ is an odd prime and $B=Sym_n$ acts naturally on $\{1,\dots, n\}$, and let
$V=(\mathbb{Z}_p)^n$ coincides with the base of the wreath product. Define $$V^{+} =\{(x,x,\ldots,x)\mid x \in \mathbb{Z}_p\} \le V$$
and $$V^{-}=\{(x_1,x_2,\ldots,x_n)\mid x_i \in \mathbb{Z}_p \mbox{ and } \sum\limits_{i=1}^n x_i=0\} \le V.$$

Since $V$ is abelian, $\overline{G}$ acts on $V$ by the natural way.

\begin{lemma}\label{WHisTheOnkyPropNormSgr} In the introduced notation, let $G=\mathbb{Z}_p \wr Sym_n$, where $p$ is an odd prime.
Assume that $\overline{H}=Sym_n$. Then the following statements hold{\rm:}

$(1)$ all the $\overline{H}$-invariant subgroups of $V$ are $0$, $V^{+}$, $V^{-}$, and $V${\rm;}

$(2)$ $[\overline{H},V^{-}]=[\overline{H},V]=[H,V^{-}]=[H,V]=V^{-}${\rm;}

$(3)$ the only proper normal subgroup of odd index in $G$ is $B V^{-}${\rm;}

$(4)$ $B V^{-}$ does not contain a proper normal subgroup of odd index{\rm;}

$(5)$ if $V^{-} \le H$, then $H \in \{B V^{-}, G\}$ and $H \unlhd G$.
\end{lemma}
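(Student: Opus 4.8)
The statement is a package of five facts about the wreath product $G=\mathbb{Z}_p\wr Sym_n$ with $p$ an odd prime, together with the hypothesis $\overline H=Sym_n$; I would prove them essentially in the listed order, each feeding into the next.

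For $(1)$ I would argue representation-theoretically. The module $V=(\mathbb{Z}_p)^n$ is the permutation module for $B=Sym_n$ over $\mathbb{F}_p$. Since $p$ is odd, $p$ does not divide\dots\ well, $p$ may divide $n$, so I cannot simply say the permutation module is semisimple; but I \emph{can} say that $V^{+}$ and $V^{-}$ are both $B$-submodules, that $V/V^{-}\cong V^{+}\cong\mathbb{F}_p$ is the trivial module, and that the only $\mathbb{F}_p Sym_n$-submodules of the permutation module are $0,V^{+},V^{-},V$ (a classical fact: the ``fully deleted'' and ``all-ones'' submodules are the only proper nonzero ones, valid for $n\geq 2$; for $n=1$ the claim degenerates and should be checked separately). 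Any $\overline H$-invariant subgroup of $V$ is a subgroup of the elementary abelian $p$-group $V$, hence an $\mathbb{F}_p$-subspace, and since $\overline H=Sym_n$ it is $Sym_n$-invariant, so it is one of the four listed submodules. I would cite or quote the submodule lattice result; the only care needed is the small cases $n=1,2$ and the overlap $V^{+}=V^{-}$ when $p\mid n$ (then there are only three distinct subgroups, but the list in the statement still "contains" all of them, so the phrasing is consistent).

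Part $(2)$: since $V^{-}$ is $B$-invariant and $[\overline H,V]\le V^{-}$ (because $V/V^{-}$ is the trivial module), the commutator $[\overline H,V]$ is an $\overline H$-invariant subgroup of $V^{-}$, hence by $(1)$ it is $0$, $V^{+}\cap V^{-}$, or $V^{-}$; it is not contained in the centralized part because $Sym_n$ acts nontrivially on $V^{-}$ (e.g. a transposition moves $e_1-e_2$ to $e_2-e_1\neq e_1-e_2$ as $p$ is odd), so $[\overline H,V]=V^{-}$, and likewise $[\overline H,V^{-}]=V^{-}$. For the versions with $H$ rather than $\overline H$: $[H,V]$ lies in $V$ and maps onto $[\overline H,V]=V^{-}$ under the identification (note $V$ is central-free of $V$'s action is via $\overline{\phantom x}$), and $[H,V]\le V^{-}$ since elements of $H$ with trivial image act as the corresponding element of $\overline H$; then the same minimality argument gives $[H,V^{-}]=[H,V]=V^{-}$. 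Parts $(3)$ and $(4)$: a proper normal subgroup $M$ of odd index contains a Sylow $2$-subgroup, so $\overline M\unlhd \overline G=B=Sym_n$ has odd index, forcing $\overline M\supseteq Alt_n$; since $M\cap V$ is a $B$-invariant (indeed $G$-invariant) subgroup of $V$, it is one of $0,V^{+},V^{-},V$ by $(1)$, and $M=(M\cap V)\overline M$-type analysis together with $M\neq G$ and the commutator identities of $(2)$ (which force $V^{-}=[Alt_n,V]\le M$, using that $Alt_n$ already acts nontrivially on $V^{-}$ for $n\geq 4$, with $n=2,3$ handled directly) pins down $M=BV^{-}$; that $BV^{-}$ has odd index is clear as $|V:V^{-}|=p$, and it has no proper normal subgroup of odd index by the same argument applied inside it, using $(2)$ again to see $[BV^{-},V^{-}]=V^{-}$. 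Finally $(5)$: if $V^{-}\le H$ then $H/V^{-}\le G/V^{-}\cong \mathbb{Z}_p\times Sym_n$, and $H/V^{-}$ has odd index and projects onto $Sym_n$ (that is $\overline H=Sym_n$); the subgroups of $\mathbb{Z}_p\times Sym_n$ of odd index projecting onto $Sym_n$ are exactly $Sym_n$ and $\mathbb{Z}_p\times Sym_n$ (since $\gcd(p,|Sym_n|)$-considerations and the fact that any such subgroup either contains the $\mathbb{Z}_p$ factor or intersects it trivially, and a complement to $\mathbb{Z}_p$ that projects onto $Sym_n$ must be $Sym_n$ itself as $\mathbb{Z}_p$ is central), giving $H\in\{BV^{-},G\}$, both of which are normal in $G$ by $(3)$.

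The main obstacle I anticipate is nailing down the submodule lattice in part $(1)$ \emph{uniformly}, including the degenerate overlap $V^{+}\subseteq V^{-}$ (equivalently $V^{+}=V^{-}$) that occurs precisely when $p\mid n$, and the genuinely small cases $n\in\{1,2,3\}$ where ``primitive'' or ``$\overline H=Sym_n$'' behaves atypically and where $Alt_n$ may act trivially on $V^{-}$; once $(1)$ is secured cleanly, parts $(2)$--$(5)$ are short commutator and normal-subgroup bookkeeping. I would state $(1)$ carefully (perhaps citing a standard reference for the permutation module of $Sym_n$ over $\mathbb{F}_p$) and then let the rest follow.
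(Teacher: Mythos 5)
Your plan is essentially sound and reaches all five parts, but it travels a different road than the paper in everything after part $(1)$. For $(1)$ you and the paper do the same thing: quote the classical submodule lattice of the permutation module (the paper cites \cite[Proposition~5.3.4]{Kleidman_Liebeck}); note, though, that your worry about ``$V^{+}=V^{-}$ when $p\mid n$'' is misplaced --- for odd $p$ one only ever has $V^{+}<V^{-}$ (equality would need $n=2$ and $p\mid 2$), so the four subgroups stay distinct and no degenerate case arises. For $(2)$ the paper argues by bare hands: $V^{-}$ is generated by the vectors $w_{ij}(a)$, and $[w_{ij}(a),(i,j)]=w_{ij}(-2a)$ with $-2a$ a generator of $\mathbb{Z}_p$, which gives $V^{-}\le[V^{-},H]$ directly; you instead use the lattice from $(1)$ and must exclude $[B,V^{-}]\le V^{+}$, and here your stated reason (``$Sym_n$ acts nontrivially on $V^{-}$'') is not by itself enough --- you need either the explicit commutator $[e_1-e_2,(1,2)]=-2(e_1-e_2)\notin V^{+}$, or the coprime-action decomposition $V=[V,s]\oplus C_V(s)$ for a transposition $s$ (order $2$, $p$ odd); either repair is one line, but it should be said. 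For $(3)$--$(4)$ the paper is shorter and cleaner than your sketch: a normal subgroup of odd index contains every Sylow $2$-subgroup, hence every transposition, hence $B$, hence $[V^{-},B]=V^{-}$ by $(2)$ and normality; your detour through $\overline{M}\supseteq Alt_n$ works but is weaker than what is available (odd index already forces $\overline{M}=Sym_n$) and creates the small-$n$ exceptions you then have to patch. For $(5)$ the paper avoids the quotient analysis altogether via the trick $h=sv\Rightarrow h^p=sv^{-}$ with $v^{-}\in[V,B]=V^{-}$, giving $B\le H$ directly; your route through $G/V^{-}\cong\mathbb{Z}_p\times Sym_n$ is fine, but the justification ``a complement to $\mathbb{Z}_p$ projecting onto $Sym_n$ must be $Sym_n$ as $\mathbb{Z}_p$ is central'' is not the right reason (in $\mathbb{Z}_2\times Sym_n$ the graph of the sign map is a central-complement counterexample); what you actually need is that complements are graphs of homomorphisms $Sym_n\to\mathbb{Z}_p$ and that there is no nontrivial such homomorphism because $p$ is odd and $Sym_n/Alt_n\cong\mathbb{Z}_2$. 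With those two justifications tightened, your argument is complete; the paper's versions of $(2)$, $(3)$--$(4)$ and $(5)$ buy explicitness and uniformity in $n$, while yours buys a more structural, quotient-based picture that also makes the index-$p$ step in $(5)$ transparent.
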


 \begin{proof} 
$(1)$ Follows from \cite[Proposition~5.3.4]{Kleidman_Liebeck}, for example.

\smallskip

$(2)$ It is easy to see that $$[V^{-},H]=[V^{-}, \overline{H}]=[V^{-},B].$$
Moreover, it is clear that $$[\overline{H},V^{-}]=[H,V^{-}] \le [\overline{H},V]=[H,V]\le V^{-}.$$ Prove that $V^{-} \le [H,V^{-}]$.
Let $i,j \in \{1,\ldots,t\}$ with $i<j$ and $\langle a \rangle=\mathbb{Z}_p$. Define  $w_{ij}(a)\in V$ as follows:
$$w_{ij}(a)=(x_1,x_2,\ldots,x_n), \mbox{ where } x_i=a, x_j=-a, \mbox{ and } x_k=0 \mbox{ for } k \not \in \{i,j\}.$$
Recall that $$V^{-}=\langle w_{ij}(a) \mid 1\le i <j \le t \rangle.$$
Moreover, $[w_{ij}(a),(i,j)]=w_{ij}(-2a)$, $\langle -2a \rangle=\mathbb{Z}_p$ (since $p$ is odd), and $\overline{H}=B$ contains all
the transpositions. Now it is clear that $V^{-}\le[V^{-},B]= [V^{-},H]$.

\smallskip

$(3), (4)$ It is easy to see that $B V^{-}$ is a proper normal subgroup of odd index in~$G$. Let $K$ be a normal subgroup of odd index
in $G$ (of $B V^{-}$, respectively). Then
by the Sylow theorems, $K$ contains each Sylow $2$-subgroup of $G$ (of $B V^{-}$, respectively). In particular, $K$ contains each Sylow
$2$-subgroup of $B$ and each transposition of~$B$. Therefore, $K$ contains $B$ (which is generated by
these transpositions). Thus, by part~$(2)$ of this lemma, $K$ contains $[V^{-},B]=V^{-}$, and so, $K \ge B V^{-}$.

\smallskip

$(5)$ Assume that $V^{-}\le H$. Show that $B\le H$. Let $s$ be a transposition from $B$. Since $\overline{H}=B$, there exists $h \in H$ such
that $h=sv$ for some $v \in V$. Note that $|s|=2$ and $|v|=p$ is odd. Using elementary calculations, it is easy to show that
$h^p=s v^{-}$ for some $v^{-} \in [V,B]$. By part~$(2)$ of this lemma, $[V,B]=V^{-}$. Since $h^p\in H$ and $V^{-} \le H$, we have $s
\in H$. Thus, $H$ contains each transposition from $B$. Therefore, $B \le H$ and $BV^{-}\le H$. Now $|G: BV^{-}|=p$ implies $H \in \{B V^{-}, G\}$ and $H \unlhd G$ by part~$(3)$ of this lemma.

\end{proof}

\begin{proposition}\label{AAAA} In the introduced notation, let $G=\mathbb{Z}_p \wr Sym_n$, where $p$ is an odd prime, and $H \le G$
such that $\overline{H} = B = Sym_n$. Then the following statements hold{\rm:}

$(1)$ if $p$ does not divide $n$, then $H$ is pronormal in $G${\rm;}

$(2)$ if $p$ divides $n$, then $H$ is pronormal in $G$ if and only if $V^{-} \le H${\rm;}

$(3)$ if $H \le K <G$, then $H$ is pronormal in $K$.

\end{proposition}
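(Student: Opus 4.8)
The plan is to reduce all three statements to the commutator criterion of Lemma~\ref{CritComplAbel}. First I would observe that $\overline{H}=B$ forces $G=HV$, and that for a subgroup $U\le V$ being $H$-invariant is the same as being $\overline{H}$-invariant, i.e. $B$-invariant, because $V$ is abelian; hence by Lemma~\ref{WHisTheOnkyPropNormSgr}$(1)$ the only candidates for $U$ in Lemma~\ref{CritComplAbel} are $0$, $V^{+}$, $V^{-}$, $V$. For $U=0$ the equality $U=N_U(H)[H,U]$ is trivial; for $U=V^{+}$ it holds because $V^{+}\le Z(G)$, so $N_{V^{+}}(H)=V^{+}$ and $[H,V^{+}]=0$; for $U=V^{-}$ it holds because $[H,V^{-}]=V^{-}$ by Lemma~\ref{WHisTheOnkyPropNormSgr}$(2)$. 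Thus the single remaining condition is $V=N_V(H)[H,V]=N_V(H)V^{-}$, and since $|V:V^{-}|=p$ is prime this is equivalent to $N_V(H)\not\le V^{-}$. So the statement I would establish first is: \emph{$H$ is pronormal in $G$ if and only if $N_V(H)\not\le V^{-}$.}

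Given this, statement $(1)$ is immediate: if $p\nmid n$ then $V^{+}\cap V^{-}=0$, so $0\ne V^{+}\not\le V^{-}$, while $V^{+}\le Z(G)\le N_V(H)$; hence $N_V(H)\not\le V^{-}$. (Alternatively one can quote Lemma~\ref{ComplAwrSn}, which gives that $B=\overline{H}$ is pronormal in $\mathbb{Z}_p\wr Sym_n$ when $p\nmid n$, and then apply Lemma~\ref{PronHomImag}.) For the ``if'' direction of $(2)$, $V^{-}\le H$ gives $H\unlhd G$ by Lemma~\ref{WHisTheOnkyPropNormSgr}$(5)$, and normal subgroups are pronormal. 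For the ``only if'' direction of $(2)$, suppose $H$ is pronormal, so there is $v\in N_V(H)$ with nonzero coordinate sum. Since $v\in N_G(H)$ we get $[h,v]=h^{-1}h^{v}\in H$ for all $h\in H$, and $[h,v]=v-v^{\overline{h}}$ lies in $V$ (additive notation for $V$); as $\overline{h}$ ranges over $\overline{H}=B$ this shows $[H,v]=\langle v-v^{b}\mid b\in B\rangle\le H\cap V$. This subgroup is $B$-invariant and contained in $V^{-}$, so by Lemma~\ref{WHisTheOnkyPropNormSgr}$(1)$ it is one of $0$, $V^{+}$, $V^{-}$. It is not $0$: otherwise $v$ is fixed by all of $Sym_n$, hence $v\in V^{+}$, hence its coordinate sum is $nx\equiv 0$ because $p\mid n$, a contradiction. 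It is not $V^{+}$: here $n\ge p\ge 3$, and the element $v-v^{(1\,2)}$ has the shape $(x_2-x_1,\,x_1-x_2,\,0,\dots,0)$, which lies in $V^{+}$ only if $x_1=x_2$; running over all transpositions forces $v\in V^{+}$, again a contradiction. Hence $[H,v]=V^{-}$, so $V^{-}\le H\cap V\le H$.

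For statement $(3)$, from $H\le K<G$ and $\overline{H}=B$ I would first note $\overline{K}=B$, so $K=H(K\cap V)$ and $K\cap V$ is a proper $B$-invariant subgroup of $V$; by Lemma~\ref{WHisTheOnkyPropNormSgr}$(1)$ this means $K\cap V\in\{0,V^{+},V^{-}\}$. Now apply Lemma~\ref{CritComplAbel} to the factorization $K=H(K\cap V)$: the $H$-invariant subgroups of $K\cap V$ are $B$-invariant and are contained in $V^{-}$ or equal to $V^{+}$, and for each such $U$ the equality $U=N_U(H)[H,U]$ has already been verified in the first paragraph (again using $V^{+}\le Z(G)$ and $[H,V^{-}]=V^{-}$). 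Hence $H$ is pronormal in $K$.

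The routine parts here are the commutator bookkeeping and the verification of the trivial cases of Lemma~\ref{CritComplAbel}; the one place that needs a genuine, though short, argument is ruling out $[H,v]=V^{+}$ in the ``only if'' direction of $(2)$, where one really uses $p\mid n$ (so that $V^{+}\le V^{-}$ and $n\ge 3$) together with the explicit action of transpositions on $V$.
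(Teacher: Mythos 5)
Your argument is correct and uses essentially the same machinery as the paper: Lemma~\ref{CritComplAbel} applied to the short list of $\overline{H}$-invariant subgroups $0$, $V^{+}$, $V^{-}$, $V$ given by Lemma~\ref{WHisTheOnkyPropNormSgr}$(1)$, together with Lemma~\ref{WHisTheOnkyPropNormSgr}$(2)$, $(5)$, and (optionally) Lemma~\ref{ComplAwrSn} plus Lemma~\ref{PronHomImag} for part $(1)$. The only divergence is organizational: in the ``only if'' part of $(2)$ the paper rules out $H\cap V=0$ and $H\cap V=V^{+}$ by showing $N_V(H)+[H,V]\le V^{-}<V$ in each case, whereas you first isolate the clean criterion ``$H$ is pronormal in $G$ if and only if $N_V(H)\not\le V^{-}$'' and then force $V^{-}\le H$ by pushing the commutators $[h,v]$ of an element $v\in N_V(H)\setminus V^{-}$ into $H\cap V$; both routes are sound, and your uniform treatment of $(3)$ via Lemma~\ref{CritComplAbel} (where the paper invokes Lemma~\ref{Overgroup} in the case $p\nmid n$) is equally valid.
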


\begin{proof} Note that $H \cap V$ is an $\overline{H}$-invariant subgroup of $V$, therefore, by  Lemma~\ref{WHisTheOnkyPropNormSgr}
part~$(1)$, $H \cap V \in \{0, V^{+}, V^{-}, V\}$.

\smallskip

If $p$ does not divide $n$, then by Lemma~\ref{ComplAwrSn}, $\overline{H}=B$ is pronormal in $G=VB=V\overline{H}$. Now,
Lemma~\ref{PronHomImag} implies that $H$ is pronormal in $G=VH$. Therefore, for each $K$ such that $H\le K \le G$ we have that $H$ is
pronormal in $K$ by Lemma~\ref{Overgroup}.


\smallskip

Let $p$ divides $n$. Then $n>2$ and $V^{+}< V^{-}$. Suppose that $H$ is pronormal in~$G$. Show that $V^{-} \le H$. By
Lemma~\ref{WHisTheOnkyPropNormSgr} part~$(1)$, it is sufficient to understand that $H \cap V \not = 0$ and $H \cap V \not =
V^{+}$.

Assume that $H \cap V =0$. Then $$N_V(H)=C_V(H)=C_V(\overline{H})=V^{+}.$$ By Lemma~\ref{WHisTheOnkyPropNormSgr} part~$(2)$, we
have $[H,V] = V^{-}$.
Recall that $V^{+} < V^{-}$. So, $$N_V(H)+[H,V]=V^{-}<V.$$ Lemma~\ref{CritComplAbel} implies that $H$ is not pronormal in~$VH=G$.

Assume that $H \cap V=V^{+}$. Let $u=(u_1,\dots,u_n) \in N_V(H)$. Then $[u,h] \in H \cap V=V^{+}$ for each $h \in H$. So, if we take $h$ such that
$\overline{h}$ is a transposition $(i,j)$, then we obtain that $u_i-u_j=u_j-u_i$. The oddness of $p$ implies that $u_i=u_j$. Thus,
$N_V(H) \le V^{+}$. By Lemma~\ref{WHisTheOnkyPropNormSgr} part~$(2)$, $[H,V]= V^{-}$.  Thus, $$N_V(H)+[H,V] \le V^{+}+V^{-}=V^{-}<V.$$
Lemma~\ref{CritComplAbel} implies that $H$ is not pronormal in $G$.

\medskip

Assume that $V^{-} \le H$.
By Lemma~\ref{WHisTheOnkyPropNormSgr} part~$(5)$, $H \in \{B V^{-}, G\}$ and $H \unlhd G$. Thus, $H \,prn\, G$.

\smallskip

Let $K$ be a subgroup of $G$ such that $H\le K$ and $K<G$. Then $V \not \le K$ and all the possibilities for $H$-invariant subgroups
from $K \cap V$ are $0$, $V^{+}$, and $V^{-}$. Now recall that by Lemma~\ref{WHisTheOnkyPropNormSgr} part~$(2)$,
$[H,V^{-}]=V^{-}$, and $V^{+}=C_{V^{+}}(H)\le N_{V^{+}}(H)$ since $V^{+} \le Z(G)$. Therefore, by Lemma~\ref{CritComplAbel}, $H$ is
pronormal in $(K\cap V)H=K$.

\end{proof}

\medskip

Now consider the case $t>1$. For each $i \in \{1, \ldots, t\}$, define corresponding subgroups $V_i^{+}$ and $V_i^{-}$ of $V_i$ as above.

\smallskip

Let $K$ be a subgroup of $G$ such that $H \le K\le G$. It follows from Lemma~\ref{Quot} part~$(1)$ that if for some $i$, $\pi_i(H)$ in not pronormal in
$\pi_i(K)$, then $H$ is not pronormal in $K$.

\smallskip

Suppose $\pi_i(H)$ is pronormal in $\pi_i(K)$ for each $i \in \{1, \ldots, t\}$ and show that $H$ is pronormal in $K$.
Assume that $p_i$ divides $n_i$ for some $i$. By Proposition~\ref{AAAA}, if $\pi_i(K)=G_i$, then $V^{-}_i \le \pi_i(H)$. By
Lemma~\ref{WHisTheOnkyPropNormSgr} part~$(5)$, we have $\pi_i(H) \in \{B^{\vphantom{-}}_i V^{-}_i, G_i\}$. Note that $H \cap G_i$ is a normal subgroup
in $H$. Consiquently, $$H \cap G_i = \pi_i(H \cap G_i) \unlhd \pi_i(H) \mbox{ and } |\pi_i(H):H \cap G_i| \mbox{ is odd}.$$ Parts
$(3)$ and~$(4)$ of Lemma~\ref{WHisTheOnkyPropNormSgr} imply that $B^{\vphantom{-}}_i V^{-}_i \unlhd H \cap G_i$. Moreover, $B^{\vphantom{-}}_i V^{-}_i \unlhd G$ and
by Lemma~\ref{Quot}, $H$ is pronormal in $K$ if and only if $H/(B^{\vphantom{-}}_i V^{-}_i)$ is pronormal in $K/(B^{\vphantom{-}}_i V^{-}_i)$.
Note that $G_i/B^{\vphantom{-}}_i V^{-}_i \cong \mathbb{Z}_{p_i}=\mathbb{Z}_{p_i} \wr Sym_1$ and $p_i$ does not divide 1, of course. Thus, replacing $K$ and $H$ with the corresponding quotients by the normal subgroup of $G$ generated by all $B^{\vphantom{-}}_i V^{-}_i$ such that $p_i$ divides $n_i$ and $\pi_i(K)=G_i$, we can assume that for each
$i$, if $p_i$ divides $n_i$, then $\pi_i(K) \le B^{\vphantom{-}}_i V^{-}_i$. Prove the following assertion.

\begin{proposition}\label{PronV1B1W2B2} In the introduced notation, let $$R=\prod_{i=1}^{t_0} B_i V_i \times \prod_{i=t_0+1}^{t}
B^{\vphantom{-}}_i V^{-}_i$$ be a subgroup of $G$. Additionally assume that $p_i$ does not divide $n_i$ if $1\le i\le t_0$, and $p_i$ divides $n_i$ if $t_0+1\le i \le t$. Suppose that $H \le
K \le R$ and $\overline{H}=\prod_{i=1}^{t} B_i$. Then $H$ is pronormal in $K$.

\end{proposition}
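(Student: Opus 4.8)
The plan is to reduce the statement to the abelian‑normal‑subgroup criterion of Lemma~\ref{CritComplAbel} and then verify that criterion by an explicit, index‑by‑index computation inside the module $V$.

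First I would observe that $K=WH$, where $W=K\cap V$ is an abelian normal subgroup of $K$. Indeed, $H\le K\le R$ and $\overline R=B$ force $\overline K=B$; since the kernel of $\overline{\phantom x}$ is $V$ we get $K/W=K/(K\cap V)\cong\overline K=B$, and because $\overline H=B$ the subgroup $H$ already surjects onto $K/W$, so $K=WH$. Thus Lemma~\ref{CritComplAbel} applies with the pair $(K,W)$ in place of $(G,V)$, and it remains to prove that $U=N_U(H)[H,U]$ for every $H$‑invariant subgroup $U$ of $W$. Next I would pass to module language: since $V$ is abelian and $H$ acts on $V$ through $\overline H=B$, an $H$‑invariant subgroup of $V$ is the same as a $B$‑submodule, $[H,U]=[B,U]$, and $N_U(H)=\{u\in U:[u,B]\le H\cap V\}$. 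Writing $V=\bigoplus_i V_i$ and $u=\sum_i u_i$ accordingly, one has $[u,B_i]=[u_i,B_i]\le V_i$, with $[u_i,B_i]=V_i^-$ if $u_i\notin V_i^+$ and $[u_i,B_i]=0$ otherwise (using Lemma~\ref{WHisTheOnkyPropNormSgr}(1),(2) applied to each factor); hence $[u,B]=\sum\{V_i^-:u_i\notin V_i^+\}$. Applying this to the whole submodule $U$ with $i$‑th projection $U_i$, we obtain $[U,B_i]=[U_i,B_i]\le U$, and $[U_i,B_i]=V_i^-$ precisely when $U_i\not\le V_i^+$; so, setting $I=\{i:U_i\not\le V_i^+\}$, we have $[U,B]=\sum_{i\in I}V_i^-$ and, crucially, $V_i^-\le U$ for every $i\in I$. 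Likewise put $J=\{i:V_i^-\not\le H\cap V\}$, so that $N_U(H)=\{u\in U:u_i\in V_i^+\text{ for all }i\in J\}$.

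The core step is then to show, for an arbitrary $u\in U$, that $u\in N_U(H)+[U,B]$ by correcting $u$ in each coordinate $i\in J$. For $i\in J\setminus I$ one has $u_i\in U_i\le V_i^+$, so nothing is needed; for $i\in J\cap I$ with $p_i\nmid n_i$ (equivalently $i\le t_0$) one uses the splitting $V_i=V_i^+\oplus V_i^-$ to write $u_i=u_i^++u_i^-$ and takes $w_i=u_i^-\in V_i^-$; for $i\in J\cap I$ with $p_i\mid n_i$ (equivalently $i>t_0$) one uses $K\le R$, which forces $U_i\le V_i^-$, hence $u_i\in V_i^-$, and one takes $w_i=u_i$. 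Setting $w=\sum_{i\in J\cap I}w_i$ we get $w\in\sum_{i\in I}V_i^-=[U,B]\le U$, so $v:=u-w\in U$; by construction $v_i\in V_i^+$ for all $i\in J$, whence $[v,B]=\sum\{V_i^-:v_i\notin V_i^+\}\le\sum_{i\notin J}V_i^-\le H\cap V$, i.e.\ $v\in N_U(H)$. Therefore $U=N_U(H)+[U,B]$, and Lemma~\ref{CritComplAbel} yields $H\,prn\,K$.

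The only real work — and where all the hypotheses are consumed — is the trichotomy of indices in the previous paragraph: oddness of the $p_i$ enters through Lemma~\ref{WHisTheOnkyPropNormSgr} (so that $[V_i^-,B_i]=V_i^-$ and the submodule lattice of $V_i$ is $\{0,V_i^+,V_i^-,V_i\}$), the hypothesis $p_i\nmid n_i$ for $i\le t_0$ supplies the decomposition $V_i=V_i^+\oplus V_i^-$, and the hypothesis $\pi_i(K)\le B_iV_i^-$ for $i>t_0$ (built into the definition of $R$) forces $U_i\le V_i^-$. Degenerate coordinates (for instance $n_i=1$, where $V_i^-=0$ and $i\notin I$) are automatically harmless. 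I do not expect any genuine obstacle beyond organising this case analysis cleanly.
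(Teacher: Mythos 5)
Your argument is correct and follows essentially the same route as the paper's proof: you verify the criterion of Lemma~\ref{CritComplAbel} for every $H$-invariant subgroup $U$ by a coordinate-wise correction, using Lemma~\ref{WHisTheOnkyPropNormSgr} to get $V_i^-\le U$ whenever $\sigma_i(U)\not\le V_i^+$, the splitting $V_i=V_i^+\oplus V_i^-$ for $p_i\nmid n_i$, and the constraint $U_i\le V_i^-$ coming from $R$ for $p_i\mid n_i$. The only (harmless) differences are cosmetic: you work directly in $K$ with $N_U(H)$ described via the index set $J$, whereas the paper first reduces to $R$ by Lemma~\ref{Overgroup} and exhibits the complementary summand inside $C_U(H)$ using $V_i^+\le Z(R)$.
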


\begin{proof} By Lemma~\ref{Overgroup}, it is sufficient to prove that $H$ is pronormal in $R$. Note that $R=(R\cap V)H$.  Thus, we use Lemma~\ref{CritComplAbel}.

Let $U$ be an $H$-invariant (equivalently, $\overline{H}$-invariant) subgroup of $R \cap V$. For each $i \in \{1, \ldots, t\}$, denote
by $\sigma_i: V \rightarrow V_i$ the corresponding projection. Furthermore, consider the restriction of $\overline{\phantom{x}}$ to $H$  and denote by $H_i$ the complete preimage
of $B_i$ under the epimorphism $H\rightarrow B$.
By Lemma~\ref{WHisTheOnkyPropNormSgr} part~$(2)$, for each $i$ we have $$V^{-}_i\ge \left[V^{-}_i,H\right]\ge
\left[V^{-}_i,H^{\vphantom{-}}_i\right]=\left[V^{-}_i,\overline{H^{\vphantom{-}}_i}\right]=\left[V^{-}_i,B^{\vphantom{-}}_i\right]=V^{-}_i.$$

Therefore, $V^{-}_i = \left[V^{-}_i, H\right]$ for each $i$.

Assume that $\sigma_j(U) \not \le V^{+}_j$ for some $j$. Then by Lemma~\ref{WHisTheOnkyPropNormSgr} part~$(1)$, we have
$\sigma_j(U)\ge V^{-}_j$.  Thus, $$V^{-}_j=\left[V^{-}_j,\overline{H^{\vphantom{-}}_j}\right]=\left[V^{-}_j,H^{\vphantom{-}}_j\right]\le \left[\sigma_j(U),H_j\right]=\left[U,H_j\right] \le \left[U,H\right]  \le U.$$
So, if $\sigma_j(U) \not \le V^{+}_j$, then $V^{-}_j \le U$.

Show that each $u \in U$ can be presented in the form $u=u^{+}+u^{-}$, where $u^{-} \in [U,H]$ and $u^+ \in C_U(H)$.

Let $u=\sum\limits_{i=1}^t u_i$, where $u_i \in V_i$.

Assume $i \le t_0$. Then $$V_i=V_i^{+}\oplus V_i^{-}.$$ So, there exists
a unique decomposition $$u_i=u_i^{+}+u_i^{-}, \text{where } u_i^{+} \in V_i^{+}\text{ and }u_i^{-} \in V_i^{-}.$$ Moreover, if $u_i^{-} \not = 0$,
then $\sigma_i(U) \not \le V_i^{+}$. Therefore $V_i^{-} \le U$ and we have $$u_i^{-} \in U\cap V_i^{-}\text{ and }u_i^{+} \in U\cap V_i^{+}.$$

Let $i>t_0$. In this case $V_i^{+} < V_i^{-}$. Define

$$u_i^{+}= \begin{cases} u_i, \mbox{ if } u_i \in V_i^{+};\\
 0 , \mbox{ if } u_i \in V_i^{-} \setminus  V_i^{+};
\end{cases}
\mbox{ and }\quad
u_i^{-} = \begin{cases} 0, \mbox{ if } u_i \in V_i^{+};\\
u_i, \mbox{ if } u_i \in V_i^{-} \setminus  V_i^{+}.
\end{cases}$$

Let $$u^{+}=\sum_{i=1}^t u_i^{+} \mbox{ and } u^{-}=\sum_{i=1}^t u_i^{-}.$$
So, $u^{+}, u^{-} \in U$ and $u=u^{+}+u^{-}$. Now $u_i^{-} \in V_i^{-}$ for each $i$, and if $u_i^{-} \not = 0$ for some $i$, then
$$\left[V^{-}_i,B^{\vphantom{-}}_i\right]=\left[V^{-}_i,\overline{H^{\vphantom{-}}_i}\right]=V_i^{-} \le U.$$ Thus, if $u_i^{-} \not = 0$, then there are $w_i \in V_i^{-}\le
U$ and ${h}_i \in {H_i}$ such that $u_i^{-}=\left[w_i,\bar{h}_i\right]=\left[w_i,{h}_i\right]$. In the case $u_i^{-} = 0$, we put ${h}_i=1$
and $w_i=0$. Let $${h}=\prod\limits_{i=1}^t {h}_i\quad \mbox{ and }\quad w=\sum_{i=1}^t w_i.$$ We have
$$u^{-}=[w,\bar{h}]=[w,h]\in [U,H].$$ 

Taking into account that $V^{+}_i \le Z(R)$ for each $i \in \{1, \ldots, t\}$, we obtain that $$u^{+} \in C_U(H).$$
So, for each $u \in U$ we have the decomposition $$u=u^{+}+u^{-} \in C_U(H) + [U,H].$$ Therefore,  $$U\le C_U(H) + [U,H]\le N_U(H) + [U,H].$$ The inclusion
$N_U(H) + [U,H] \le U$ is clear. Application of Lemma~\ref{CritComplAbel} completes the proof. Thus, $H$ is pronormal in $R$.
%
%
%
%
\end{proof}

\medskip

So, we have proved the following theorem.

\begin{theorem}\label{SOIinProdC3wrSn} Let $G =\prod\limits_{i=1}^t G_i$, where  $G_i \cong \mathbb{Z}_{p_i}\wr Sym_{n_i}$ for each
$i$, each $p_i$ is an odd prime, and each $Sym_{n_i}$ acts naturally on $\{1,\dots, n_i\}$. In the introduced notation, assume that
$H$ is a {\rm(}non-trivial{\rm)} subgroup of odd index of $G$ such that $\overline{\pi_i(H)}^i$ is a primitive subgroup of $Sym_{n_i}$
for each $i \in \{1,\ldots,t\}$. Then the following statements hold{\rm:}

$(i)$ $\overline{H}=\prod\limits_{i=1}^t Sym_{n_i}${\rm;}

$(ii)$ for any $K \le G$ such that $H \le K$, $H$ is pronormal in $K$ if and only if $\pi_i(H)$ is pronormal in $\pi_i(K)$ for each $i
\in \{1,\ldots,t\}${\rm;} and

$(iii)$ if $t=1$ {\rm(}here we put $p_1=p$ and $n_1=n$ for brevity{\rm)}, then the following statements hold{\rm:}

$\mbox{ }$$\mbox{ }$$\mbox{ }$$\mbox{ }$ $(1)$ if $p$ does not divide $n$, then $H$ is pronormal in $G${\rm;}

$\mbox{ }$$\mbox{ }$$\mbox{ }$$\mbox{ }$ $(2)$ if $p$ divides $n$, then $H$ is pronormal in $G$ if and only if $V^{-} \le H${\rm;}

$\mbox{ }$$\mbox{ }$$\mbox{ }$$\mbox{ }$ $(3)$ if $H \le K <G$, then $H$ is pronormal in $K$.

\end{theorem}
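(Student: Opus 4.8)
The plan is to recognise this theorem as the synthesis of the three results already established in this section, so that the proof amounts to citing them in the right order and checking that the hypotheses transfer correctly. Part~$(iii)$ is literally Proposition~\ref{AAAA} specialised to $t=1$: with $\pi_1$ the identity map, the primitivity hypothesis together with Jordan's theorem forces $\overline{H}=Sym_n$, which is exactly the standing assumption of Proposition~\ref{AAAA}. Part~$(i)$ is the primitivity observation recorded just before the $t=1$ discussion, and the content of part~$(ii)$ is the reduction carried out between Proposition~\ref{AAAA} and Proposition~\ref{PronV1B1W2B2}, together with Proposition~\ref{PronV1B1W2B2} itself.

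For part~$(i)$ I would argue that $H\le_2 G$ forces $\pi_i(H)\le_2 G_i$, hence $\overline{\pi_i(H)}^i\le_2 Sym_{n_i}$, so $\overline{\pi_i(H)}^i$ contains a Sylow $2$-subgroup of $Sym_{n_i}$ and therefore a transposition. Since a primitive subgroup of $Sym_{n_i}$ containing a transposition equals $Sym_{n_i}$ by Jordan's theorem \cite[Theorem~13.3]{Wielandt}, we obtain $\overline{\pi_i(H)}^i=Sym_{n_i}$ for every $i$; and as each $Sym_{n_i}\in\mathbb{X}_2$ by \cite[Lemma~4]{CarterFong}, iterating Lemma~\ref{XtimesYprojections} over $B=B_1\times\cdots\times B_t$ gives $\overline{H}=\prod_{i=1}^t\overline{\pi_i(H)}^i=\prod_{i=1}^t Sym_{n_i}$.

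For part~$(ii)$, the forward implication is immediate from Lemma~\ref{Quot} part~$(1)$, applied with the normal subgroup $K\cap\ker\pi_i$ of $K$: the quotient $K/(K\cap\ker\pi_i)$ is isomorphic to $\pi_i(K)$ and carries $H$ onto $\pi_i(H)$. For the converse, assuming $\pi_i(H)\,prn\,\pi_i(K)$ for all $i$, I would follow the argument preceding Proposition~\ref{PronV1B1W2B2}. Let $I_0$ be the set of indices $i$ with $p_i\mid n_i$ and $\pi_i(K)=G_i$. For $i\in I_0$, Proposition~\ref{AAAA} forces $V_i^{-}\le\pi_i(H)$, hence $\pi_i(H)\in\{B_iV_i^{-},G_i\}$ by Lemma~\ref{WHisTheOnkyPropNormSgr} part~$(5)$; since $H\cap G_i\unlhd H$ has odd index in $\pi_i(H)$, parts $(3)$ and $(4)$ of that lemma give $B_iV_i^{-}\unlhd H\cap G_i$, and as $B_iV_i^{-}\unlhd G$ one passes, via Lemma~\ref{Quot} part~$(3)$, to the images of $H$ and $K$ modulo the subgroup $N_0$ generated by all $B_iV_i^{-}$ with $i\in I_0$. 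Because $N_0\le\prod_{i\in I_0}G_i$ and $G_i/B_iV_i^{-}\cong\mathbb{Z}_{p_i}\cong\mathbb{Z}_{p_i}\wr Sym_1$, this reduction leaves the hypotheses $\overline{H}=\prod B_i$ and $\pi_i(H)\,prn\,\pi_i(K)$ intact while arranging $\pi_i(K)\le B_iV_i^{-}$ for every surviving factor with $p_i\mid n_i$. Re-indexing so that $p_i\nmid n_i$ for $i\le t_0$ and $p_i\mid n_i$ for $i>t_0$, we then have $K\le R=\prod_{i\le t_0}B_iV_i\times\prod_{i>t_0}B_iV_i^{-}$, and Proposition~\ref{PronV1B1W2B2} yields $H\,prn\,K$.

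I expect the only genuinely delicate point to be the bookkeeping in this last reduction, namely verifying that modding out by $N_0$ destroys neither $\overline{H}=\prod B_i$ nor the inclusion $K\le R$ nor the inductive hypothesis on the projections; there I would lean on the commutativity of the diagram relating the $\pi_i$ with $\overline{\phantom{x}}$ and on the fact that $N_0$ is trivial in every coordinate outside $I_0$. Everything else is a direct invocation of Propositions~\ref{AAAA} and~\ref{PronV1B1W2B2} and the lemmas of Sections~\ref{Prelim} and~\ref{SPT2}.
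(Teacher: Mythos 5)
Your proposal is correct and follows essentially the same route as the paper, which itself states the theorem with the words ``So, we have proved the following theorem'': part $(i)$ is the Jordan-theorem argument combined with Lemma~\ref{XtimesYprojections}, part $(iii)$ is Proposition~\ref{AAAA}, and part $(ii)$ is Lemma~\ref{Quot}$(1)$ for the forward direction together with the reduction (quotienting by the subgroups $B^{\vphantom{-}}_i V^{-}_i$) and Proposition~\ref{PronV1B1W2B2} for the converse. Even the point you flag as delicate --- that after the quotient every surviving factor with $p_i\mid n_i$ has $\pi_i(K)\le B^{\vphantom{-}}_i V^{-}_i$ --- is treated at exactly the same level of detail in the paper's own text.
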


\medskip

\noindent{\bf Remark 2.} In is easy to see that for each positive integer $n$, $\mathbb{Z}_2\wr Sym_n \in \mathbb{X}_2$ if $Sym_{n}$ acts naturally on $\{1,\dots, n\}$. Thus, the requirement of oddness of each $p_i$ can be omitted in parts $(i)$ and $(ii)$ of Theorem~\ref{SOIinProdC3wrSn} by Lemma~\ref{XtimesY}.

\medskip

Consider an example of application the obtained results to solving General Problem
for a given subgroup of odd index in the finite simple symplectic group $PSp_6(3)$.

\begin{example}\label{ExPSp6(3)} Let $\overline{G}=PSp_6(3)$ and $\overline{H}\le_2 \overline{G}$. Decide, is $\overline{H}$ pronormal
in $\overline{G}${\rm?}

\end{example}

Let $G=Sp_6(3)$, $V$ be the natural module of $G$, and $H$ be the complete preimage of $\overline{H}$ in $G$.
It is easy to see that $H \le_2 G$.
Recall that by Lemma~\ref{Quot}, $H$ is pronormal in $G$ if and only if $\overline{H}$ is pronormal in $\overline{G}$.

We can assume that $H$ is a proper subgroup of $G$. Therefore, there exists a maximal subgroup $M$ of $G$ such that $H \le M$. The
index $|G:M|$ is odd.

Since $q=3$ is prime, by Lemma~\ref{CMSOIPSp}, possibilities for $M$ are the following{\rm:}

\begin{itemize}
\item[Type $(1)$] $M \cong Sp_{2}(3)\times Sp_{4}(3)$ is the stabilizer of a non-degenerate subspace of dimension $2$ of $V${\rm;}

\item[Type $(2)$] $M \cong Sp_{2}(3)\wr Sym_3$ is the stabilizer of an orthogonal decomposition $${V =\bigoplus_{i=1}^3 V_i}$$ into a
    sum of $3$ pairwise isometric non-degenerate subspaces $V_i$ of dimension~$2$.
\end{itemize}

Let $S \in Syl_2(G)$ such that $S \le H$.

If there is a subgroup $M$ of Type $(1)$ such that $H \le M$, then $H$ is reducible on $V$. Moreover, $H$ is pronormal in $M$ by
Lemma~\ref{ProdPSp2w}. By Lemma~\ref{N_G(S)=S}, we have ${N_G(S)=N_M(S)}$. Therefore, $H$ is pronormal in $G$ by
Lemma~\ref{Overgroup}.

If there is no a maximal subgroup $M$ of Type $(1)$ such that $H \le M$, then by Lemma~\ref{CMSOIPSp}, $H$ is irreducible on $V$ and
there is a maximal subgroup $M$ of Type $(2)$ such that $H \le M\cong Sp_{2}(3)\wr Sym_3$, where $Sym_3$ acts naturally on the set
$\{1,2,3\}$.
By Lemma~\ref{N_G(S)=S}, we have ${N_G(S)=N_M(S)}$. Therefore, by Lemma~\ref{Overgroup}, $H$ is pronormal in $G$ if and only if $H$ is
pronormal in $M$.

By \cite{Atlas}, $Sp_{2}(3)\cong \mathbb{Z}_2.Alt_4$, therefore, $$H \le M \cong (\mathbb{Z}_2.Alt_4)\wr Sym_3.$$
Moreover, since $H$ is irreducible on $V$ and the dimension of $V_i$ was chosen as maximal as possible (since there was the only
choice $dim(V_i)=2$), we conclude that $H$ projects onto $Sym_3$.

By Lemma~\ref{Quot}, $H$ is pronormal in $M$ if and only if $H/O_2(M)$ is pronormal in $M/O_2(M)\cong \mathbb{Z}_3\wr Sym_3$.
Now to decide is $H$ pronormal in $M$, it is sufficient to apply Theorem~\ref{SOIinProdC3wrSn} (really, in this case it is sufficient to apply Proposition~\ref{AAAA} which is a part of Theorem~\ref{SOIinProdC3wrSn}). \hfill$\Box$

\medskip

Thus, Problems~1 and~2 formulated in Section~\ref{Intr} are of interest. Moreover, the following problems naturally arise.

\medskip

\noindent{\bf Problem 3.} Find a criterion of pronormality of a given subgroup of odd index in the group $G =\prod\limits_{i=1}^t
\mathbb{Z}_{p_i}\wr Sym_{n_i}$, where each $p_i$ is a prime and each $Sym_{n_i}$ acts naturally on $\{1,\dots, n_i\}$.

\bigskip

\noindent{\bf Problem 4.} Provide an effective algorithm which solves General Problem for an arbitrary subgroup of odd index in the
group $G =\prod\limits_{i=1}^t \mathbb{Z}_{p_i}\wr Sym_{n_i}$, where each $p_i$ is a prime and each $Sym_{n_i}$ acts naturally on
$\{1,\dots, n_i\}$.

\medskip

\section{Acknowledgements}

This work was supported by the Russian Science Foundation (project 19-71-10067).

\medskip




\bigskip

\noindent Natalia~V. Maslova\\
Krasovskii Institute of Mathematics and Mechanics UB RAS, Yekaterinburg,~Russia\\
Ural Federal University, Russia\\
E-mail address: butterson@mail.ru\\
ORCID: 0000-0001-6574-5335

\medskip

\noindent Danila~O. Revin\\
Krasovskii Institute of Mathematics and Mechanics UB RAS, Yekaterinburg,~Russia\\
Sobolev Institute of Mathematics SB RAS, Novosibirsk, Russia\\
Novosibirsk State University, Russia\\
E-mail address: revin@math.nsc.ru

\end{document}